\numberwithin{equation}{section}
\newcommand{\file}{\jobname}
\newtheorem{theorem}{Theorem}[section]
\newtheorem{proposition}[theorem]{Proposition}
\newtheorem{corollary}[theorem]{Corollary}
\newtheorem{lemma}[theorem]{Lemma}
\newtheorem{definition}[theorem]{Definition}
\newenvironment{proof}{\begin{trivlist} \item[] \textbf{Proof.}}{\quad \rule{2mm}{2mm} \end{trivlist}}
\newenvironment{proof*}[1]{\begin{trivlist} \item[] \textbf{Proof of #1.} }{\quad \rule{2mm}{2mm} \end{trivlist}}
\author{Zura Dvalashvili and Nato Nadirashvili\\
The University of Georgia \\ School of Science and Technology \\ 77a Merab Kostava St, Tbilisi, 0128, Georgia \\
		e-mail: zurabdvalashvili@gmail.com \\ 	
		e-mail: nato.nadirashvili@gmail.com \\
\and
Supervisor:\\
 George Tephnadze\\
The University of Georgia \\ School of Science and Technology \\ 77a Merab Kostava St, Tbilisi, 0128, Georgia \\
e-mail: g.tephnadze@ug.edu.ge }
\title{\huge Master Thesis \\
	 On the almost everywhere and norm convergences of N\"orlund means with respect to Vilenkin systems}
\date{2021}
\begin{document}

\maketitle

\newpage

\thispagestyle{empty}
\begin{center}
	{\scriptsize This page intentionally left blank}
\end{center}

\newpage




\pagenumbering{roman}
\tableofcontents

\newpage

\section*{Preface}
\markboth{PREFACE}{}

\addcontentsline{toc}{section}{Preface}

\qquad Unlike the classical theory of Fourier series which deals with decomposition of a function into sinusoidal waves the Vilenkin (Walsh) functions are rectangular waves. The development of the theory of Vilenkin-Fourier series has been strongly influenced by the classical theory of trigonometric series but there are a lot of differences also. The aim of my master thesis is to discuss, develop and apply the newest developments of this fascinating theory connected to modern harmonic analysis. In particular, we investigate N\"orlund  means but only in the case  when their coefficients are monotone and prove convergence in Lebesgue and Vilenkin-Lebesgue points. Since almost everywhere points are Lebesgue and Vilenkin-Lebesgue points for any integrable functions we obtain almost everywhere convergence of such summability methods.

\newpage

\section*{Key words}
\markboth{Key words}{}

\addcontentsline{toc}{section}{Key words}

\textbf{Key words:} Vilenkin systems, N\"orlund  means,  almost everywhere convergence, Lebesgue points, Vilenkin-Lebesgue points, convergence, approximation.

\pagenumbering{arabic} 

\chapter{Introduction}
\section{Vilenkin Groups and Functions}\label{s0,2}
\markright{Vilenkin Groups and Functions}

\ \ \ \ \ \ Denote by 
$\mathbb{N}_+$ the set of the positive integers,
$\mathbb{N}:=\mathbb{N}_+\cup \{0\}$, 
\index{\file-1}{$\mathbb{N}_+$} \index{\file}{positive integers}
\index{\file-1}{$\mathbb{N}$}
$\mathbb{Z}$ \index{\file-1}{$\mathbb{Z}$}
\index{\file}{integers} the set of the integers,
$\mathbb{R}$ \index{\file-1}{$\mathbb{R}$}
\index{\file}{real numbers}
the real numbers,
$\mathbb{R}_+$ \index{\file-1}{$\mathbb{R}_+$}
\index{\file}{positive real numbers}
the positive real numbers,
$\mathbb{C}$ \index{\file-1}{$\mathbb{C}$}
\index{\file}{complex numbers}
the complex numbers.
Let $m:=(m_0,m_1,\ldots)$ \index{\file-1}{$m_k$} be a sequence of positive
integers not less than 2. Denote by \index{\file-1}{$Z_{m_k}$}
\begin{equation*}
Z_{m_k}:=\{0,1,\ldots,m_k-1\}
\end{equation*}%
the additive group of integers modulo $m_k$ \index{\file}{additive group of integers modulo $m_k$}.

Define the group $G_m$ \index{\file-1}{$G_m$} as the complete direct product of the groups $Z_{m_k}$ with the product of the discrete topologies of $Z_{m_k}$.

The direct product $\mu $ of the measures
\begin{equation*}
\mu_k\left(j\right):=1/m_k\qquad(j\in Z_{m_k})
\end{equation*}%
is the Haar measure on $G_m$ with $\mu\left(G_m\right)=1.$

If $\sup_{n\in\mathbb{N}}m_n<\infty$, then we call $G_m$ a bounded Vilenkin \index{A}{Vilenkin} group.
If the generating sequence $m$ is not bounded, then $G_m$ is said to be an unbounded Vilenkin group.

In this book we discuss only bounded Vilenkin groups,\index{\file}{bounded Vilenkin groups} i.e. the case when $\sup_{n\in\mathbb{N}}m_n<\infty.$

The elements of $G_m$ are represented by sequences

\begin{equation*}
x:=\left(x_0,x_1,\ldots,x_j,\ldots\right)\qquad\left(x_j\in Z_{m_j}\right).
\end{equation*}

It is easy to give a base for the neighborhoods of $G_m:$  \index{\file-1}{$I_0\left(x\right)$}  \index{\file-1}{$I_n\left(x\right)$}

\begin{eqnarray*}
	I_0\left(x\right)&:&=G_m,\\
	I_n(x)&:&=\{y\in G_m\mid y_0=x_0,\ldots
	,y_{n-1}=x_{n-1}\} \qquad\left(x\in G_m,\ \ n\in\mathbb{N}\right).
\end{eqnarray*}

We call subsets $I_n(x)\subset G_m$ Vilenkin intervals. Let  \index{\file-1}{$ e_n$} \index{\file}{Vilenkin polynomials}

\begin{equation*}
e_n:=\left(0,\ldots,0,x_n=1,0,\ldots\right)\in G_m\qquad \left(n\in\mathbb{N}\right).
\end{equation*}

If we define $I_n:=I_n\left(0\right),$ \index{\file-1}{$I_n$} for  $n\in\mathbb{N}$ and $\overline{I_n}:=G_m \ \ \backslash $ $I_n,$ \index{\file-1}{$\overline{I_n} $} then

\begin{equation} \label{1.1}
\overline{I_N}=\overset{N-1}{\underset{s=0}{\bigcup}}I_s\backslash
I_{s+1}=\left(\overset{N-2}{\underset{k=0}{\bigcup}}\overset{N-1}
{\underset{l=k+1}{\bigcup}}I_N^{k,l}\right)\bigcup
\left(\underset{k=1}{\bigcup\limits^{N-1}}I_N^{k,N}\right),
\end{equation}
where \index{\file-1}{$I_N^{k,l}$}

\begin{equation*}
I_N^{k,l}:=\left\{ \begin{array}{l}I_N(0,\ldots,0,x_k\neq 0,0,...,0,x_l\neq 0,x_{l+1},\ldots ,x_{N-1},\ldots),\\
\text{for} \qquad k<l<N,\\
I_N(0,\ldots,0,x_k\neq 0,x_{k+1}=0,\ldots,x_{N-1}=0,x_N,\ldots ), \\
\text{for } \qquad l=N. \end{array}\right.
\end{equation*}

If we define the so-called generalized number system based on $m$  in the
following way : \index{\file-1}{$M_k$}

\begin{equation*}
M_0:=1, \ \  M_{k+1}:=m_kM_k \ \  (k\in\mathbb{N}),
\end{equation*}
then every $n\in\mathbb{N}$ can be uniquely expressed as

\begin{equation*}
n=\sum_{j=0}^{\infty}n_jM_j,
\end{equation*}
where $n_j\in Z_{m_j} \ \ (j\in\mathbb{N}_+)$ and only a finite number of $n_j^{^{\prime}}$`s differ from zero.

The Vilenkin group can be metrizable with the following metric:

\begin{equation*}
\rho\left(x,y\right):=\left\vert x-y\right\vert:=\sum_{k=0}^{\infty}
\frac{\left\vert x_k-y_k\right\vert}{M_{k+1}},\qquad\left( x,y\in G_m\right).
\end{equation*}

For the natural numbers $n=\sum_{j=1}^{\infty}n_jM_j$ and $k=\sum_{j=1}^{\infty}k_jM_j$ we define

\begin{equation*}
n\widehat{+}k:=\sum_{i=0}^{\infty}{\left(n_i\oplus k_i\right)}{M_{i+1}}
\end{equation*}
and \index{\file-1}{$\widehat{-}$}

\begin{equation*}
n\widehat{-}k:=\sum_{i=0}^{\infty}{\left(n_i\ominus k_i\right)} {M_{i+1}},
\end{equation*}
where

\begin{equation*}
a_i\oplus b_i:=(a_i+b_i)\text{mod}m_i, \qquad a_i, b_i \in Z_{m_i}
\end{equation*}
and $\ominus $ \index{\file-1}{$\ominus$} is the inverse operation for  \index{\file-1}{$\oplus$} $ \oplus $.

Next, we introduce on $G_m$ an orthonormal system, which is called
Vilenkin system (see \cite{Vi,Vi1,Vi2}).

At first, we define the complex-valued function
$r_k\left(x\right):G_m\rightarrow\mathbb{C},$
the generalized Rademacher functions, \index{\file}{generalized Rademacher functions} by \index{\file-1}{$r_k\left(x\right)$}

\begin{equation}\label{rad0}
r_k\left(x\right):=\exp\left(2\pi \imath x_{k}/m_{k}\right),\ \ \left(
\imath^{2}=-1, \ \ x\in G_{m},\ \ k\in\mathbb{N}\right).
\end{equation}

Now, define the Vilenkin systems \index{\file}{Vilenkin systems}
$\psi:=(\psi_n:n\in\mathbb{N})$ on $G_m$ by: \index{\file-1}{$\psi_n(x)$}

\begin{equation}\label{Vilenkin}
\psi_n(x):=\prod\limits_{k=0}^{\infty}r_k^{n_k}\left(x\right), \ \ \left(n\in\mathbb{N}\right).
\end{equation}

Specifically, we call this system the Walsh-Paley system \index{\file}{Walsh-Paley system} \index{A}{Walsh-Paley} when $m\equiv 2.$

\begin{proposition}(see \cite{AVD}) \label{vilprop}
	Let $n\in\mathbb{N}.$ Then
	
	\begin{eqnarray}
	\left\vert \psi _{n}\left( x\right) \right\vert &=&1,\text{ }\notag \\
	\psi _{n}\left( x-y\right) &=&\psi _{n}\left( x\right) \text{\ }%
\overline{\psi }_{n}\left( y\right) .  \notag 
	\end{eqnarray}
\end{proposition}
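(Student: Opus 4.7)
My plan is to reduce both claims to the corresponding statements for the generalized Rademacher functions $r_k$ and then take products. The key observation is that only finitely many of the exponents $n_k$ in the expansion of $n$ are nonzero, so the product defining $\psi_n(x)$ is really a finite product of unit-modulus complex numbers, which immediately gives $|\psi_n(x)|=1$ from $|r_k(x)|=|\exp(2\pi\ii x_k/m_k)|=1$ and multiplicativity of the absolute value.

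For the second identity, the subtraction $x-y$ on the group $G_m$ is performed coordinatewise modulo $m_k$, that is, $(x-y)_k = (x_k-y_k)\bmod m_k$. I would first verify the Rademacher-level identity
\begin{equation*}
r_k(x-y) = r_k(x)\,\overline{r_k(y)}.
\end{equation*}
This holds because replacing $x_k-y_k$ by its residue modulo $m_k$ only changes the argument of the exponential by an integer multiple of $2\pi\ii$, so
\begin{equation*}
r_k(x-y) = \exp\!\left(\tfrac{2\pi\ii (x_k-y_k)}{m_k}\right) = \exp\!\left(\tfrac{2\pi\ii x_k}{m_k}\right)\exp\!\left(-\tfrac{2\pi\ii y_k}{m_k}\right) = r_k(x)\,\overline{r_k(y)}.
\end{equation*}

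Raising this identity to the $n_k$-th power and multiplying over $k\in\mathbb{N}$ then yields
\begin{equation*}
\psi_n(x-y) = \prod_{k=0}^{\infty} r_k^{n_k}(x-y) = \prod_{k=0}^{\infty} r_k^{n_k}(x)\,\overline{r_k^{n_k}(y)} = \psi_n(x)\,\overline{\psi_n(y)},
\end{equation*}
where the rearrangement is legitimate because each factor is a complex number of modulus $1$ and all but finitely many factors equal $1$.

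There is no real obstacle here; the only point requiring a brief justification is that the group operation is coordinatewise modular arithmetic, so that the $2\pi$-periodicity of $\exp(2\pi\ii\,\cdot\,)$ absorbs the ``$\bmod\, m_k$'' appearing in $(x-y)_k$. Everything else is bookkeeping with finite products.
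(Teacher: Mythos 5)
Your argument is correct: reducing both claims to the Rademacher level, using that the group operation is coordinatewise addition modulo $m_k$ so the ``$\bmod\, m_k$'' is absorbed by the $2\pi$-periodicity of the exponential, and then taking the (effectively finite) product over $k$ is exactly the standard proof of this fact. The paper itself states this proposition without proof, citing the literature, so there is nothing further to compare; your write-up fills that gap correctly, with the only cosmetic slip being that the residue replacement changes the argument of the exponential by an integer multiple of $2\pi$ (not of $2\pi\imath$).
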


The direct product $\mu $ of the measures

\begin{equation*}
\mu _{k}\left( \{j\}\right) :=1/m_{k}\text{ \qquad }(j\in Z_{m_{k}})
\end{equation*}%
is the Haar measure on $G_{m_{\text{ }}}$with $\mu \left( G_{m}\right) =1.$
Translation of a subset $I_{n}(x)\in G_{m}$ by $y$ is defined by
$\tau_y\left( I_{n}(x)\right) =\{I_{n}(x)+y\}.$
Since $\mu$ is a product measure we get that
$\mu \left( I_{n}(x)\right)=1/{M_n}$
and

$$\mu \left( \tau _{y}\left(
I_{n}(x)\right) \right) =\mu \left( I_{n}(x+y)\right)=1/M_n$$
for all $y\in G_{m}.$ Hence,
$$\mu\left(\tau_y\left(I_n(x)\right)\right)=\mu \left(I_n(x)\right).$$

\begin{proposition} \label{prop1.5} Let $n,k\in \mathbb{N}.$ Then
	
	\begin{equation*}
	\int_{G_{m}}\psi _{n}d\mu \text{\thinspace }=\left\{
	\begin{array}{ll}
	1\text{ \ \ \ \ }\, & n=0, \\
	0 & n\neq 0.%
	\end{array}%
	\right.
	\end{equation*}
	
	Moreover, the Vilenkin systems are orthonormal, that is,
	
	\begin{equation*}
	\int_{G_{m}}\psi _{n}\overline{\psi _{k}}d\mu \text{\thinspace }=\left\{
	\begin{array}{ll}
	1\text{ \ \ \ \ }\, & n=k, \\
	0 & n\neq k.%
	\end{array}%
	\right.
	\end{equation*}
\end{proposition}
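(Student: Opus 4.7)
The plan is to reduce both statements to a one-dimensional computation on a single factor $Z_{m_k}$ by exploiting the product structure of the Haar measure $\mu$, and then to deduce the orthogonality relation in part two from the integral formula in part one via the multiplicative identity $\psi_n\overline{\psi_k}=\psi_{n\widehat{-}k}$.

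First I would dispatch the case $n=0$: from the definition \eqref{Vilenkin}, $\psi_0(x)=\prod_k r_k^{0}(x)=1$, so $\int_{G_m}\psi_0\,d\mu=\mu(G_m)=1$. For $n\neq 0$, the crucial observation is that only finitely many digits $n_k$ are nonzero, so $\psi_n(x)=\prod_{k=0}^{N}r_k^{n_k}(x)$ for some $N$, and each factor depends only on the coordinate $x_k$. Since $\mu$ is the direct product of the discrete measures $\mu_k$ on $Z_{m_k}$, Fubini's theorem factors the integral as
\[
\int_{G_m}\psi_n\,d\mu=\prod_{k=0}^{N}\int_{Z_{m_k}}r_k^{n_k}(x_k)\,d\mu_k(x_k)=\prod_{k=0}^{N}\frac{1}{m_k}\sum_{j=0}^{m_k-1}\exp(2\pi\ii jn_k/m_k).
\]
For each $k$, if $n_k=0$ the inner sum equals $m_k$ and the factor is $1$; if $0<n_k<m_k$, then $\exp(2\pi\ii n_k/m_k)\neq 1$ while its $m_k$-th power is $1$, and the geometric sum vanishes. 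Because $n\neq 0$ forces at least one $n_k\neq 0$, the whole product collapses to $0$, which settles the first formula.

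For the second part, the case $n=k$ follows immediately from Proposition \ref{vilprop}: $|\psi_n|=1$ gives $\psi_n\overline{\psi_n}\equiv 1$ and hence integral $1$. For $n\neq k$, I would verify directly from \eqref{rad0} and \eqref{Vilenkin} that
\[
\psi_n(x)\overline{\psi_k(x)}=\prod_{j=0}^{\infty}\exp\bigl(2\pi\ii x_j(n_j-k_j)/m_j\bigr)=\prod_{j=0}^{\infty}r_j^{n_j\ominus k_j}(x)=\psi_{n\widehat{-}k}(x),
\]
using that $\exp(2\pi\ii x_j(n_j-k_j)/m_j)=\exp(2\pi\ii x_j(n_j\ominus k_j)/m_j)$ since the exponent is only relevant modulo $m_j$ (as $x_j\in Z_{m_j}$). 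Then $n\neq k$ gives $n\widehat{-}k\neq 0$, and the integral vanishes by the first part.

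The main technical point, though routine, is justifying the factorization of the infinite product integral; this is handled cleanly by noting the finite support of the digits of $n$ (and of $n\widehat{-}k$), so only finitely many factors are nontrivial and Fubini applies on a finite product of discrete spaces. Beyond this, the computation is essentially the orthogonality of characters on the finite cyclic group $Z_{m_k}$, lifted to $G_m$ via the product structure.
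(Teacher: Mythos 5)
Your proof is correct, and it is the standard argument: the paper states Proposition \ref{prop1.5} without proof (as a known fact from the Vilenkin literature), so there is no competing proof to compare against; your reduction via Fubini on the product measure to character orthogonality on each finite cyclic group $Z_{m_k}$, together with $\psi_n\overline{\psi_k}=\psi_{n\widehat{-}k}$ and the first formula, is exactly the argument the paper implicitly relies on. One small caveat: the paper's displayed definition of $n\widehat{-}k$ contains a misprint (it uses $M_{i+1}$ rather than $M_i$), so strictly speaking your identity $\prod_j r_j^{n_j\ominus k_j}=\psi_{n\widehat{-}k}$ matches the intended definition $n\widehat{-}k=\sum_i (n_i\ominus k_i)M_i$; if you want to be immune to that, simply observe that $\psi_n\overline{\psi_k}=\psi_\ell$ with $\ell=\sum_j(n_j\ominus k_j)M_j\neq 0$ whenever $n\neq k$ (since $n_j\neq k_j$ in $Z_{m_j}$ forces $n_j\ominus k_j\neq 0$), and apply the first part to $\psi_\ell$.
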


\section{$L_p$ and weak-$L_p$ Spaces }\label{s0,5}
\markright{Lebesgue Spaces}

\ \ \ \ By a Vilenkin polynomial we mean a finite linear
combination of Vilenkin functions. We denote the collection of Vilenkin polynomials by $\mathcal{P}$. \index{\file-1}{$\mathcal{P}$} \index{\file}{Vilenkin polynomials}

Let $L^0(G_m)$ represent the collection of functions which are almost everywhere limits with respect to a measure $\mu$ of sequences in $\mathcal{P}$.

For $0<p<\infty$ let $L^p(G_m)$ represent the collection of $f\in L^0(G_m)$ such that \index{A}{Lebesgue} \index{\file}{Lebesgue spaces}   \index{\file-1}{$L^p(G_m)$}
\begin{equation*}
\left\Vert f\right\Vert_p:=\left(\int_{G_m}\left\vert f\right\vert
^pd\mu\right)^{1/p}
\end{equation*}
is finite.

Denote by $L^{\infty }(G_{m})$ the space of all  $f\in L^0(G_m)$ for which

\begin{equation*}
\left\Vert f\right\Vert _{\infty }:=\inf \left\{ C>0:\mu \left\{ x\in G_{m}:\left\vert f\right\vert >C\right\} =0\right\} <+\infty .
\end{equation*}

The space $C(G_{m})$ consist all continuous function for which
\begin{equation*}
\left\Vert f\right\Vert _{C}:=\underset{x\in G_{m}}{\sup }\left\vert
f(x)\right\vert<c<\infty.
\end{equation*}

\begin{proposition}(see \cite{Tor1})
	\label{Remark1.1.4} Well-known Minkowski's integral inequality  is given by 
	
	\begin{equation*}
	\left\Vert \int_{G_m}f\left( \cdot ,t\right) dt\right\Vert _{p}\leq\int_{G_m}\left\Vert f\left( \cdot ,t\right) \right\Vert _{p}dt, \ \ \ \text{for all} \ \ \ p\geq 1.
	\end{equation*}
\end{proposition}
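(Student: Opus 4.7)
The plan is to treat the case $p=1$ directly and then reduce the general case $1<p<\infty$ to it via duality in $L^p(G_m)$. The case $p=\infty$ can be handled separately with an essential-supremum argument, or subsumed into the duality setup.

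First I would set $F(x):=\int_{G_m} f(x,t)\,dt$ and observe that for $p=1$ the inequality is immediate: by the pointwise bound $|F(x)|\le \int_{G_m}|f(x,t)|\,dt$ and Tonelli's theorem applied on the product space $G_m\times G_m$,
$$\int_{G_m}|F(x)|\,d\mu(x)\le \int_{G_m}\int_{G_m}|f(x,t)|\,d\mu(x)\,dt=\int_{G_m}\|f(\cdot,t)\|_1\,dt.$$
This step requires no machinery beyond switching the order of integration.

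For $1<p<\infty$ the plan is to use the duality characterization $\|F\|_p=\sup\{|\int_{G_m} F\,\overline{h}\,d\mu|:h\in L^{p'}(G_m),\ \|h\|_{p'}\le 1\}$, where $p'=p/(p-1)$. Fixing such an $h$, I would multiply and integrate, use Tonelli to swap the order of integration, then apply Hölder's inequality in the inner integral. Schematically,
$$\left|\int_{G_m} F(x)\overline{h(x)}\,d\mu(x)\right|\le \int_{G_m}\int_{G_m}|f(x,t)||h(x)|\,d\mu(x)\,dt\le \int_{G_m}\|f(\cdot,t)\|_p\,\|h\|_{p'}\,dt,$$
and since $\|h\|_{p'}\le 1$ the right-hand side is bounded by $\int_{G_m}\|f(\cdot,t)\|_p\,dt$. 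Taking the supremum over admissible $h$ yields the claim.

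The only genuine obstacle is the justification that all the integrals are well-defined and that Tonelli's theorem applies, which reduces to checking that $(x,t)\mapsto f(x,t)$ is jointly measurable with respect to the product measure on $G_m\times G_m$; this is implicit in the statement so I would only note it briefly. The case $p=\infty$ can then be disposed of by taking $h=\chi_E/\mu(E)$ for measurable sets $E$ and passing to the essential supremum, or simply by observing $|F(x)|\le\int\|f(\cdot,t)\|_\infty\,dt$ for almost every $x$.
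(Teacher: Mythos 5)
The paper does not actually prove this proposition: it is stated with a pointer to the literature (\cite{Tor1}) and used as a known tool, so there is no in-paper argument to compare yours against. Your proposal is the standard and correct proof of the integral Minkowski inequality: the case $p=1$ by Tonelli, the range $1<p<\infty$ by the duality description of the $L^p$-norm together with H\"older's inequality in the inner integral, and $p=\infty$ by an essential-supremum argument. Two small points are worth making explicit if you write it out. First, to avoid assuming a priori that $F=\int_{G_m}f(\cdot,t)\,dt$ lies in $L^p$, use the duality formula in the form $\Vert F\Vert_p=\sup\bigl\{\int_{G_m}|F|\,h\,d\mu : h\ge 0,\ \Vert h\Vert_{p'}\le 1\bigr\}$, which is valid for any nonnegative measurable function on the (finite, hence $\sigma$-finite) measure space $G_m$; this sidesteps any integrability issue in $\int F\overline{h}\,d\mu$. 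Second, in the case $p=\infty$ the exceptional null set in $|f(x,t)|\le\Vert f(\cdot,t)\Vert_\infty$ depends on $t$, so one should pass through the product-measure-zero set $\{(x,t):|f(x,t)|>\Vert f(\cdot,t)\Vert_\infty\}$ and Tonelli to get the bound for almost every $x$. An equally common alternative, which avoids duality altogether, is to write $\Vert F\Vert_p^p\le\int_{G_m}|F(x)|^{p-1}\int_{G_m}|f(x,t)|\,dt\,d\mu(x)$, swap the integrals, apply H\"older with exponents $p$ and $p'$, and divide by $\Vert F\Vert_p^{p-1}$ (after reducing to the case $0<\Vert F\Vert_p<\infty$, e.g.\ by truncation); either route is fine for the purposes of this paper.
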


The convolution of two functions
$f,g\in L^{1}(G_m)$ is defined by

\begin{equation*}
\left( f\ast g\right) \left( x\right) :=\int_{G_m}f\left( x-t\right)
g\left( t\right) dt\text{ \ \ }\left( x\in G_m\right).
\end{equation*}%
It is easy to see that

\begin{equation*}
\left( f\ast g\right) \left( x\right) =\int_{G_m}f\left( t\right)
g\left( x-t\right) dt\text{ \ \ }\left( x\in G_m\right).
\end{equation*}

\begin{proposition}\label{Theorem 1.1.6} 
	Let $f\in L^{r}\left( G_m\right) ,$ $g\in
	L^{1}\left(  G_m\right) $ and $1\leq r<\infty .$ Then $f\ast g\in
	L^{r}\left(  G_m\right) $ and
	
	\begin{equation*}
	\left\Vert f\ast g\right\Vert _{r}\leq \left\Vert f\right\Vert_{r}\left\Vert g\right\Vert _{1}.
	\end{equation*}
\end{proposition}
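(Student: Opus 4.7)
The plan is to deduce this from Minkowski's integral inequality (Proposition~\ref{Remark1.1.4}) combined with the translation invariance of the Haar measure $\mu$ noted in Section~\ref{s0,2}. Writing the convolution as
$$
(f\ast g)(x)=\int_{G_m} f(x-t)\,g(t)\,dt,
$$
I view the integrand, for each fixed $t\in G_m$, as a function of $x$ belonging (a priori) to $L^r(G_m)$. Applying Minkowski's integral inequality to the outer $L^r$-norm in $x$ yields
$$
\|f\ast g\|_r\le \int_{G_m}\bigl\|f(\cdot-t)\,g(t)\bigr\|_r\,dt.
$$
Since $g(t)$ does not depend on $x$, it factors out as a scalar, giving
$$
\|f\ast g\|_r\le \int_{G_m} |g(t)|\,\bigl\|f(\cdot-t)\bigr\|_r\,dt.
$$

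The next step is to invoke translation invariance. Because $\mu(\tau_y(I_n(x)))=\mu(I_n(x))$ for every $y\in G_m$, and the Vilenkin intervals generate the $\sigma$-algebra, the Haar measure $\mu$ is invariant under the group translations $x\mapsto x-t$. Consequently
$$
\bigl\|f(\cdot-t)\bigr\|_r^{\,r}=\int_{G_m}|f(x-t)|^{r}\,d\mu(x)=\int_{G_m}|f(x)|^{r}\,d\mu(x)=\|f\|_r^{\,r}
$$
for every $t\in G_m$, so $\|f(\cdot-t)\|_r=\|f\|_r$. Substituting this into the previous bound gives
$$
\|f\ast g\|_r\le \|f\|_r\int_{G_m}|g(t)|\,dt=\|f\|_r\,\|g\|_1,
$$
which is the desired inequality and in particular shows $f\ast g\in L^r(G_m)$.

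I do not expect a serious obstacle here: Minkowski's inequality does the heavy lifting, and translation invariance on $G_m$ is built into the product structure of $\mu$. The only technical point to be careful about is making sure the application of Minkowski is legitimate, i.e. that $(t,x)\mapsto f(x-t)g(t)$ is jointly measurable so the integral defining $(f\ast g)(x)$ makes sense for a.e.\ $x$; this is standard and follows from the continuity of the group operation together with Fubini's theorem applied to the finite iterated integral $\int\!\!\int|f(x-t)g(t)|\,d\mu(x)d\mu(t)=\|f\|_1\|g\|_1$ in the case $r=1$, and from the $L^r$-bound just derived in the general case.
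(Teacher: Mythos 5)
Your argument is correct: the paper states this proposition without giving a proof (it is the standard Young-type inequality for convolution with an $L^1$ factor), and your derivation via Minkowski's integral inequality (Proposition~\ref{Remark1.1.4}) together with the translation invariance of the Haar measure is exactly the intended standard route, applied correctly. Nothing further is needed.
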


First we present the following very important proposition:

\begin{proposition}\label{remarkae}
	Since the Vilenkin function $\psi _{m}$ is constant on $I_{n}(x)$ for every $x\in G_{m}$ and $0\leq m<M_{n},$ it is clear that each Vilenkin function is a complex-valued step function, that is, it is a finite linear combination of the characteristic functions.
	On the other hand, notice that, by Lemma \ref{dn2.3} (Paley's Lemma), it yields that
	
	\begin{equation*}
	\chi_{I_n(t)} \left(x\right) =\frac{1}{M_n}\sum_{j=0}^{M_n-1}\psi_j\left( x-t\right) ,\text{ \ \ }x\in I_n(t),
	\end{equation*}
	for each $x,t\in $ $G_{m}$ and $n\in \mathbb{N}$. Thus each step function is a Vilenkin polynomial. Consequently, we obtain that the collection of step functions coincides with a collection of Vilenkin polynomials $\mathcal{P}$. 
	
	Since the Lebesgue measure is regular it follows  that given $f\in L^{1}$ there exist Vilenkin polynomials
	
	$$P_1,P_2..., \ \ \ \text{ such that } \ \  \ P_{n}\rightarrow f \ \ \ \text{  a.e., as }  \ \ \ n\rightarrow \infty .$$ 
	Moreover, any $f\in L^p(G_m)$ can be written in the form $f = g - h$ where the functions $g, \ h$ are almost everywhere limits of increasing sequences of non-negative Vilenkin polynomials. In particular, $\mathcal{P}$ is dense in the space $L^p,$  for all $p\geq 1.$
\end{proposition}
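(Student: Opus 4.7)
The plan is to address each assertion in sequence. First I would verify that every Vilenkin function is a step function with respect to the partitions $\{I_n(y)\}_y$: for $m<M_n$ the expansion in the generalized number system has $m_j=0$ for $j\geq n$, so \eqref{Vilenkin} truncates to $\psi_m(x)=\prod_{k=0}^{n-1} r_k(x)^{m_k}$ and depends only on $x_0,\ldots,x_{n-1}$, hence is constant on each $I_n(y)$. Consequently every Vilenkin polynomial is a step function on some Vilenkin partition.

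Second, I would use the formula from Paley's Lemma \ref{dn2.3}, $\chi_{I_n(t)}=M_n^{-1}\sum_{j=0}^{M_n-1}\psi_j(\cdot-t)$, to conclude that the characteristic function of every basic Vilenkin interval is a polynomial. Because a step function on a Vilenkin partition is a finite linear combination $\sum_y c_y \chi_{I_n(y)}$ over the finitely many distinct values it takes on $\{I_n(y)\}$, linearity yields that each such step function is a Vilenkin polynomial. Combined with the first paragraph, $\mathcal{P}$ equals the set of step functions with respect to Vilenkin partitions.

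Third, for approximation of $f\in L^1$ by polynomials almost everywhere, I would introduce the conditional-expectation operator
$$E_n f=\sum_y \frac{1}{\mu(I_n(y))}\left(\int_{I_n(y)} f\,d\mu\right)\chi_{I_n(y)},$$
which lies in $\mathcal{P}$ by the previous step. Doob's martingale convergence theorem — or, equivalently, the Lebesgue differentiation theorem adapted to the nested clopen partitions, which applies because the $I_n$-intervals form a neighborhood base shrinking to points in the metric $\rho$ — gives $E_n f\to f$ almost everywhere. Density in $L^p$ then follows: the $E_n$ are $L^p$-contractions, so combining dominated convergence for bounded $f$ with an $L^p$-truncation argument yields $\|E_n f-f\|_p\to 0$.

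Finally, for the decomposition $f=g-h$ with $g,h$ each an a.e.\ increasing limit of non-negative Vilenkin polynomials, I would first split $f$ into real and imaginary parts and each of those into positive and negative parts, reducing to $f\geq 0$. For such $f$, the standard dyadic approximation $s_n=\min(n,\lfloor 2^n f\rfloor/2^n)$ is an increasing sequence of non-negative simple functions converging a.e.\ to $f$; to realize each level approximant as a Vilenkin polynomial, I would replace it by its conditional expectation onto a sufficiently fine partition $\{I_{N_n}(y)\}_y$ and then extract a subsequence that is monotone a.e.\ via Egorov's theorem and a diagonal selection. The main obstacle is precisely this monotonization step: the natural polynomial approximants $E_n f$ are not increasing in $n$, so the monotone property must be recovered by a careful interplay between the dyadic levels $2^{-n}$ and the partition scale $M_{N_n}^{-1}$, exploiting the regularity of the Haar measure and the fact that level sets of any $E_{N_n}$-measurable function are already unions of $I_{N_n}$-intervals.
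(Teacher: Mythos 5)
Your first two steps match the intended argument exactly, and your third step (taking $E_nf=S_{M_n}f$, which is a Vilenkin polynomial, and invoking martingale convergence or Lebesgue differentiation to get a.e.\ convergence and $L^p$-density) is a perfectly good substitute for the paper's terse appeal to regularity of the measure. The genuine gap is in the last part. After reducing to $f\ge 0$ you try to exhibit $f$ itself as an a.e.\ increasing limit of non-negative Vilenkin polynomials, and you flag the monotonization of the approximants as the main obstacle; but no Egorov-plus-diagonal device can remove it, because the claim you are steering towards is false. Vilenkin polynomials are continuous on $G_m$ (each $I_n(y)$ is clopen), so if non-negative polynomials $P_n$ increase a.e., then $g(x):=\sup_n P_n(x)$ is lower semicontinuous and coincides a.e.\ with the limit. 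Now take $A\subset G_m$ closed, nowhere dense and of positive measure (delete a dense open union of intervals of total measure $<1$). If a lower semicontinuous $g$ agreed a.e.\ with $\chi_A$, the open set $U=\{g>1/2\}$ would satisfy $\mu(U\triangle A)=0$; then $U\setminus A$ would be open and null, hence empty, so $U\subseteq A$, and since $A$ has empty interior, $U=\emptyset$, forcing $\mu(A)=0$, a contradiction. Thus a non-negative integrable function need not be an increasing a.e.\ limit of non-negative polynomials, and the reduction via positive and negative parts cannot give the decomposition $f=g-h$.

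The decomposition has to be produced by a series argument, and the density you have already established is precisely what it needs: for real $f$ choose Vilenkin polynomials $P_n$ (e.g.\ real-valued, such as $S_{M_{k_n}}f$) with $\Vert f-P_n\Vert_1\le 2^{-n}$, write $f=P_1+\sum_{n\ge 1}(P_{n+1}-P_n)$ a.e.\ (the series converges absolutely a.e.\ since $\sum_n\Vert P_{n+1}-P_n\Vert_1<\infty$), and set $g$, $h$ to be the limits of the partial sums of $P_1^{+}+\sum_n(P_{n+1}-P_n)^{+}$ and $P_1^{-}+\sum_n(P_{n+1}-P_n)^{-}$, respectively. Positive and negative parts of a real Vilenkin polynomial are again step functions on the same Vilenkin partition, hence polynomials; the partial sums are non-negative polynomials increasing everywhere; both limits are finite a.e.\ by monotone convergence; and $f=g-h$ a.e. (Complex $f$ must in any case be handled through real and imaginary parts; the paper's formulation of this clause is implicitly for real-valued functions.) With this replacement your outline becomes a complete proof.
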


The space $weak-L^p\left(G_m\right)$ \index{\file-1}{$weak-L^p\left( G_m\right) $} consists of all measurable functions $f$, for which
\begin{equation*}
\left\Vert f\right\Vert_{weak-L^p}:=\underset{y>0}{\sup}y
\{\mu\left(f>y\right)\}^{1/p}<+\infty.
\end{equation*}

\begin{proposition}\label{proposition 1.1.2} (see \cite{Tor1}) If $0<p\leq \infty ,$ then $L^{p}\left(G_m\right) \subset \text{weak}-L^p(G_m)$ and
	\begin{equation*}
	\left\Vert f\right\Vert_{\text{weak}-L^p}\leq \left\Vert f\right\Vert_{p}.
	\end{equation*}
\end{proposition}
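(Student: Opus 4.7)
The plan is to prove the inclusion via the classical Chebyshev/Markov inequality, treating the finite and infinite exponent cases separately.

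First I would handle the case $0 < p < \infty$. Fix $f \in L^p(G_m)$ and $y > 0$, and let $E_y := \{x \in G_m : |f(x)| > y\}$. On $E_y$ the pointwise bound $1 \leq |f(x)|^p / y^p$ holds, so integrating yields
\begin{equation*}
y^p \, \mu(E_y) \;=\; \int_{E_y} y^p \, d\mu \;\leq\; \int_{E_y} |f|^p \, d\mu \;\leq\; \int_{G_m} |f|^p \, d\mu \;=\; \|f\|_p^p.
\end{equation*}
Taking $p$-th roots and rearranging gives $y \, \mu(E_y)^{1/p} \leq \|f\|_p$ for every $y > 0$, and taking the supremum in $y$ produces exactly $\|f\|_{\text{weak-}L^p} \leq \|f\|_p$, which in particular shows finiteness and hence the inclusion $L^p(G_m) \subset \text{weak-}L^p(G_m)$.

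For the case $p = \infty$, the argument is essentially a tautology from the definition of $\|f\|_\infty$: if $y > \|f\|_\infty$, then $\mu(\{|f| > y\}) = 0$, so any reasonable interpretation of the weak-$L^\infty$ quasinorm (the standard convention being $\|f\|_{\text{weak-}L^\infty} = \|f\|_\infty$) yields the desired inequality trivially. I would just note this briefly.

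There is no real obstacle here; the proof is a one-line application of Chebyshev's inequality followed by taking a supremum. The only thing worth being careful about is writing the chain of inequalities cleanly so it is clear where each step uses $f \in L^p$, and handling the endpoint $p = \infty$ by the definition of the essential supremum rather than trying to let $p \to \infty$ in the weak-$L^p$ formula.
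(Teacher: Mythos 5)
Your proof is correct and follows essentially the same route as the paper: the Chebyshev/Markov chain $y^p\mu(|f|>y)\leq\int_{\{|f|>y\}}|f|^p\,d\mu\leq\|f\|_p^p$ followed by taking the supremum over $y$. You additionally spell out the $p=\infty$ endpoint, which the paper's one-line proof leaves implicit; otherwise the arguments coincide.
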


\begin{proof}
	It is easy to see that
	
	\begin{equation*}
	\int_{G_m}\left\vert f\left( x\right) \right\vert^{p}dx\geq\int_{\left\{ x:\left\vert f\left(x\right) \right\vert >y \right\} }\left\vert f\left(x\right)\right\vert^p dx\geq y^p \mu\left(\left\vert f\right\vert>y \right),
	\end{equation*}
	which proves the proposition.
\end{proof}

An operator $T$ which maps a linear space of measurable functions on $G_m$ in the collection
of measurable functions on $G_m$ is called sublinear if
$$\vert T(f + g)\vert \leq \vert T(f)\vert+\vert T( g)\vert
\text{ \ \ a.e. on \ \ } G_m \text{ \ \ and \ \ }
\vert T(\alpha f)\vert=\vert \alpha \vert \vert T( f)\vert$$
for all scalars $\alpha$ and all $f$ in the domain of $T.$

\section{Dirichlet and Vilenkin-Fej\'er Kernels }\label{s0,6}
\markright{Dirichlet Kernels}

\ \ \ \ \ If $f\in L^1\left( G_m\right) $ we can define the
Fourier coefficients, \index{\file}{Fourier coefficients} the partial sums  of Vilenkin-Fourier series, \index{\file}{partial sums of Vilenkin-Fourier series} the Dirichlet
kernels with respect to Vilenkin systems \index{A}{Dirichlet} \index{\file}{Dirichlet
	kernels with respect to Vilenkin systems} in the usual manner:
\index{\file-1}{$\widehat{f}\left( n\right)$}
\index{\file-1}{$S_{n}f$}
\index{\file-1}{$D_{n}$}
\begin{eqnarray*}
	\widehat{f}\left(n\right)&:=&\int_{G_m}f\overline{\psi}_nd\mu,\qquad \left(n\in \mathbb{N}\right), \\
	S_nf&:=&\sum_{k=0}^{n-1}\widehat{f}\left(k\right)\psi_k, \qquad\left(n\in\mathbb{N}_+\right), \\
	D_n&:=&\sum_{k=0}^{n-1}\psi_{k},\qquad\left(n\in\mathbb{N}_+\right),
\end{eqnarray*}
respectively. It is easy to see that
\begin{eqnarray*}
	S_nf\left(x\right)&=&\int_{G_m}f\left(t\right)\sum_{k=0}^{n-1}\psi
	_k\left(x-t\right)d\mu \left(t\right) \\
	&=&\int_{G_m}f\left( t\right) D_n\left(x-t\right)d\mu\left(t\right)
	=\left(f\ast D_n\right)\left(x\right).
\end{eqnarray*}

The next well-known identities with respect to Dirichlet kernels (see Lemmas \ref{dn1} and \ref{dn2.1}, Lemma \ref{dn2.3}) will be used many times in the proofs of our main results:

\begin{lemma} (see \cite{AVD}) \label{dn}
	\label{dn1}Let $n\in\mathbb{N}.$ Then
	\begin{equation} \label{dn21}
	D_{j+M_n}=D_{M_n}+\psi_{M_n}D_j=D_{M_n}+r_nD_j, \qquad j\leq\left( m_n-1\right)M_n
	\end{equation}
	and
	\begin{eqnarray} \label{dn22}
	D_{M_n-j}(x)&=&D_{M_n}(x)-\overline{\psi}_{M_n-1}(-x)D_j(-x)\\ \notag
	&=&D_{M_n}(x)-\psi_{M_n-1}(x)\overline{D}_j(x), \qquad
	j<M_n.
	\end{eqnarray}
\end{lemma}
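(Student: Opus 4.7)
The plan is to prove both identities by reducing them to the digit-level structure of the Vilenkin characters through the generalized number system based on $m$.

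For identity \eqref{dn21}, I would first split the sum defining $D_{j+M_n}$ at index $M_n$, writing
\begin{equation*}
D_{j+M_n} = \sum_{k=0}^{M_n-1}\psi_k + \sum_{k=0}^{j-1}\psi_{k+M_n} = D_{M_n} + \sum_{k=0}^{j-1}\psi_{k+M_n}.
\end{equation*}
The key observation is that for $0\leq k<j\leq (m_n-1)M_n$ the coefficient of $M_n$ in the $m$-adic expansion of $k$ is at most $m_n-2$, so adding $M_n$ to $k$ produces no carry: the digits of $k+M_n$ agree with those of $k$ except at position $n$, where the digit is increased by $1$. Applying \eqref{Vilenkin}, this yields $\psi_{k+M_n}(x) = r_n(x)\,\psi_k(x) = \psi_{M_n}(x)\,\psi_k(x)$, and summing over $k$ gives the claim.

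For identity \eqref{dn22}, I would rearrange as
\begin{equation*}
D_{M_n}(x) - D_{M_n-j}(x) = \sum_{k=M_n-j}^{M_n-1}\psi_k(x) = \sum_{l=0}^{j-1}\psi_{M_n-1-l}(x),
\end{equation*}
via the substitution $k = M_n-1-l$. Using the expansion $M_n-1 = \sum_{i=0}^{n-1}(m_i-1)M_i$ together with $l = \sum_{i=0}^{n-1}l_iM_i$ (valid since $l<M_n$), carry-free subtraction gives $M_n-1-l = \sum_{i=0}^{n-1}(m_i-1-l_i)M_i$. This produces the digit-level factorization $\psi_{M_n-1-l}(x) = \psi_{M_n-1}(x)\,\overline{\psi_l(x)}$.

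It remains to convert this into the form stated in the lemma. The main technical step is to observe that since $r_i(-x) = \exp(2\pi\imath(-x_i)/m_i) = \overline{r_i(x)}$ componentwise, we have $\psi_n(-x) = \overline{\psi_n(x)}$ for every $n\in\mathbb{N}$. Hence $\psi_{M_n-1}(x)\,\overline{\psi_l(x)} = \overline{\psi_{M_n-1}(-x)}\,\psi_l(-x)$, and summing over $l$ produces $\overline{\psi_{M_n-1}(-x)}\,D_j(-x)$, giving the first equality in \eqref{dn22}. The alternative form follows immediately by the same conjugation identity applied termwise to $D_j(-x) = \sum_{l=0}^{j-1}\psi_l(-x) = \overline{D_j(x)}$.

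The only real pitfall is the careful bookkeeping of carries in the generalized number system: both the hypothesis $j\leq(m_n-1)M_n$ in \eqref{dn21} and the hypothesis $j<M_n$ in \eqref{dn22} are exactly what ensures that the relevant additions and subtractions respect the digit decomposition, so I would flag and verify these carry-free conditions explicitly before invoking the multiplicative structure of $\psi_n$.
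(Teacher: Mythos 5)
Your proof is correct and complete. Note that the paper does not prove this lemma at all---it is quoted from \cite{AVD}---so there is no internal proof to compare against; your digit-level argument is the standard one. Both carry-free verifications are exactly the right points to check: from $k<j\leq(m_n-1)M_n$ you get $k\leq(m_n-1)M_n-1<M_{n+1}$, hence $k_n\leq m_n-2$ and all higher digits vanish, so $\psi_{k+M_n}=r_n\psi_k=\psi_{M_n}\psi_k$ termwise, which gives \eqref{dn21}; and for $l<M_n$ the expansion $M_n-1-l=\sum_{i=0}^{n-1}(m_i-1-l_i)M_i$ has admissible digits, so $\psi_{M_n-1-l}=\psi_{M_n-1}\overline{\psi}_l$, which after summation gives the second form of \eqref{dn22}. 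For the passage to the first form, your identity $\psi_k(-x)=\overline{\psi_k(x)}$ is also an immediate consequence of Proposition \ref{vilprop} (set $x=0$ in $\psi_k(x-y)=\psi_k(x)\overline{\psi}_k(y)$); if you argue it coordinatewise as you do, just state explicitly that the $i$-th coordinate of $-x$ in $G_m$ is $(m_i-x_i)\bmod m_i$, which is precisely why $r_i(-x)=\overline{r_i(x)}$. The only other cosmetic remark is the degenerate case $j=0$ of \eqref{dn21}, which holds with the usual convention $D_0:=0$.
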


\begin{lemma} (see \cite{AVD}) \label{dn2.1}
	Let $n\in\mathbb{N}$ and $1\leq s_n\leq m_n-1.$ Then \index{\file-1}{$D_{s_nM_n}$}
	\begin{equation} \label{9dn}
	D_{s_nM_n}=D_{M_n}\sum_{k=0}^{s_n-1}\psi_{kM_n}=D_{M_n}\sum_{k=0}^{s_n-1}r_n^k
	\end{equation}
	and
	\begin{equation} \label{2dn}
	D_n=\psi_n\left(\sum_{j=0}^{\infty}D_{M_j}\sum_{k=m_j-n_j}^{m_j-1}r_j^k\right),
	\end{equation}
	for $n=\sum_{i=0}^{\infty}n_iM_i.$
\end{lemma}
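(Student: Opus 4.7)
\textbf{Identity \eqref{9dn}.} The plan is to handle the two equalities in turn. The second equality is essentially a statement about Vilenkin digits: for $0\le k\le m_n-1$ the expansion of $kM_n$ in the generalized number system has its $n$-th digit equal to $k$ and all other digits zero, so by the definition of $\psi$ in \eqref{Vilenkin} one gets $\psi_{kM_n}=r_n^k$ at once. For the first equality I would induct on $s_n$. The base case $s_n=1$ is immediate. In the inductive step, the constraint $s_n\le m_n-1$ lets me apply \eqref{dn21} with $j=s_nM_n\le(m_n-1)M_n$:
\begin{equation*}
D_{(s_n+1)M_n}=D_{M_n}+r_nD_{s_nM_n}=D_{M_n}\Bigl(1+r_n\sum_{k=0}^{s_n-1}r_n^k\Bigr)=D_{M_n}\sum_{k=0}^{s_n}r_n^k,
\end{equation*}
closing the induction.

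\textbf{Identity \eqref{2dn}.} For the second identity I would use strong induction on $N$, the largest index with $n_N\neq0$, and set $n':=\sum_{i=0}^{N-1}n_iM_i<M_N$ so that $n=n_NM_N+n'$. Splitting $D_n=\sum_{k=0}^{n-1}\psi_k$ at the cut $k=n_NM_N$ and using that, for $l<M_N$, the digit representations of $n_NM_N$ and $l$ occupy disjoint positions (so $\psi_{n_NM_N+l}=r_N^{n_N}\psi_l$), I arrive at the clean decomposition
\begin{equation*}
D_n=D_{n_NM_N}+r_N^{n_N}D_{n'}.
\end{equation*}
To the first summand I would apply \eqref{9dn}; to the second (when $n'>0$) the induction hypothesis, while if $n'=0$ the summand simply vanishes. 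Using the factorization $\psi_n=r_N^{n_N}\psi_{n'}$, the second summand already contributes exactly the $j<N$ terms in the right-hand side of \eqref{2dn}, since the digits of $n$ and of $n'$ agree for $j<N$.

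\textbf{Main obstacle.} The only really delicate step is matching the leftover contribution $D_{M_N}\sum_{k=0}^{n_N-1}r_N^k$ coming from $D_{n_NM_N}$ with the $j=N$ term of the claimed formula, namely $\psi_n D_{M_N}\sum_{k=m_N-n_N}^{m_N-1}r_N^k$. I would resolve this by invoking Paley's Lemma (Lemma \ref{dn2.3}): $D_{M_N}$ is supported on $I_N$, and on $I_N$ one has $x_j=0$ and hence $r_j\equiv1$ for all $j<N$, so $\psi_{n'}D_{M_N}=D_{M_N}$. Combining this with the substitution $k=m_N-l$ and $r_N^{m_N}=1$ transforms $\psi_n D_{M_N}\sum_{k=m_N-n_N}^{m_N-1}r_N^k$ into $\psi_{n'}D_{M_N}\sum_{k=0}^{n_N-1}r_N^k=D_{M_N}\sum_{k=0}^{n_N-1}r_N^k$, which is precisely the leftover. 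This bookkeeping step — trading an upper-range geometric sum with a global factor $\psi_n$ for a lower-range sum, by exploiting the localization of $D_{M_N}$ — is where the argument is most easily mishandled and is the only place Paley's Lemma is genuinely needed.
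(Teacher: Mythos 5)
Your proof is correct. The paper itself gives no argument for this lemma (it is only cited from \cite{AVD}), so there is nothing to compare against; judged on its own, your two inductions are sound. For \eqref{9dn}, the digit observation $\psi_{kM_n}=r_n^k$ and the inductive use of \eqref{dn21} with $j=s_nM_n\leq (m_n-1)M_n$ are exactly what is needed. For \eqref{2dn}, the decomposition $D_n=D_{n_NM_N}+r_N^{n_N}D_{n'}$ is valid because for $l<M_N$ the digits of $l$ and $n_NM_N$ do not interact, and your treatment of the delicate $j=N$ term checks out: since $D_{M_N}=M_N\chi_{I_N}$ by Lemma \ref{dn2.3} and $r_j\equiv 1$ on $I_N$ for $j<N$, one has $\psi_{n}D_{M_N}=r_N^{n_N}D_{M_N}$, and then $r_N^{n_N}\sum_{k=m_N-n_N}^{m_N-1}r_N^k=\sum_{k=0}^{n_N-1}r_N^k$ because $r_N^{m_N}=1$; this recovers $D_{n_NM_N}$ via \eqref{9dn}, while the induction hypothesis together with $\psi_n=r_N^{n_N}\psi_{n'}$ accounts for all $j<N$ terms. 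This is the standard argument for these kernel identities, and your localization step is indeed the only place where Paley's Lemma is genuinely required.
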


\begin{lemma} (see \cite{AVD} and \cite{gol}) \textbf{(Paley's Lemma)}\label{dn2.3}
	\index{\file-1}{$D_{M_n}$}
	Let $n\in \mathbb{N}.$ Then%
	\begin{equation*}
	D_{M_n}\left(x\right)=\left\{ \begin{array}{ll} M_n, & x\in I_n, \\
	0, & x\notin I_n. \end{array} \right.
	\end{equation*}
\end{lemma}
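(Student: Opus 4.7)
The plan is to exploit the multiplicative structure of the Vilenkin characters together with the uniqueness of the $m$-adic digit expansion. By definition $D_{M_n}(x)=\sum_{k=0}^{M_n-1}\psi_k(x)$, and every integer $k$ in the range $0\le k<M_n$ can be uniquely written as $k=\sum_{j=0}^{n-1}k_jM_j$ with $k_j\in Z_{m_j}$ (since $M_n=m_0m_1\cdots m_{n-1}$). Using (\ref{Vilenkin}), $\psi_k(x)=\prod_{j=0}^{n-1}r_j^{k_j}(x)$, the higher-index factors being trivial because $k_j=0$ for $j\ge n$.

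First I would interchange the sum and product: since the digits $k_0,\dots,k_{n-1}$ run independently over their respective ranges, the sum factorizes as
\begin{equation*}
D_{M_n}(x)=\sum_{k_0=0}^{m_0-1}\cdots\sum_{k_{n-1}=0}^{m_{n-1}-1}\prod_{j=0}^{n-1}r_j^{k_j}(x)=\prod_{j=0}^{n-1}\Bigl(\sum_{k_j=0}^{m_j-1}r_j^{k_j}(x)\Bigr).
\end{equation*}
Next I would evaluate each factor separately. By the definition (\ref{rad0}), $r_j(x)=\exp(2\pi\ii x_j/m_j)$ is an $m_j$-th root of unity. If $x_j=0$, then $r_j(x)=1$ and the inner sum is just $m_j$. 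If $x_j\ne 0$, then $r_j(x)$ is a nontrivial $m_j$-th root of unity, so the geometric sum $\sum_{k=0}^{m_j-1}r_j(x)^k$ vanishes by the standard identity $(z^{m_j}-1)/(z-1)=0$ at such $z$.

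Finally I would read off the two cases from the characterization of $I_n$. If $x\in I_n$, then $x_0=\cdots=x_{n-1}=0$, every factor equals $m_j$, and $\prod_{j=0}^{n-1}m_j=M_n$. If $x\notin I_n$, then at least one of the digits $x_0,\dots,x_{n-1}$ is nonzero, the corresponding factor vanishes, and the whole product is $0$. There is no real obstacle here: the statement is essentially bookkeeping on the digit expansion, and the only place where care is needed is making sure the factorization of $\psi_k$ matches the factorization of the index set $\{0,1,\dots,M_n-1\}$ as a product of the digit ranges $Z_{m_0}\times\cdots\times Z_{m_{n-1}}$.
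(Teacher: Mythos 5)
Your proof is correct: the factorization of $D_{M_n}$ into the product $\prod_{j=0}^{n-1}\bigl(\sum_{k_j=0}^{m_j-1}r_j^{k_j}(x)\bigr)$ via the unique digit expansion of $k<M_n$, followed by the geometric-sum evaluation of each factor, is exactly the classical argument for Paley's Lemma. The paper itself gives no proof but only cites \cite{AVD} and \cite{gol}, and your argument is the standard one found there, so nothing further is needed.
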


We also need the following estimate:
\begin{lemma}(see \cite{AVD})\label{dn2.6int1}Let $n\in \mathbb{N}$. Then
	
	\begin{equation*}
	\Vert D_{M_n}\Vert_1=1.
	\end{equation*}
\end{lemma}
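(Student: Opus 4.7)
The plan is to invoke Paley's Lemma (Lemma \ref{dn2.3}) directly and then perform a one-line computation using the known value of the Haar measure on the basic neighborhood $I_n$. This is essentially a corollary of the preceding lemma, so the challenge is purely bookkeeping rather than conceptual.

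First, I would recall from Paley's Lemma that $D_{M_n}$ is supported on $I_n$ and takes the constant value $M_n$ there, so in particular $|D_{M_n}(x)| = M_n \chi_{I_n}(x)$. Second, I would use the fact (established earlier via the product structure of the Haar measure) that $\mu(I_n) = 1/M_n$. Combining these gives
\begin{equation*}
\|D_{M_n}\|_1 = \int_{G_m} |D_{M_n}(x)|\, d\mu(x) = \int_{I_n} M_n\, d\mu(x) = M_n \cdot \mu(I_n) = M_n \cdot \frac{1}{M_n} = 1.
\end{equation*}

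There is no real obstacle here: the only thing to be careful about is citing Paley's Lemma correctly and noting that $D_{M_n}$ is in fact non-negative on its support (it equals $M_n$, not just bounded by it), so the absolute value signs can be dropped without further argument. The result then follows immediately, and this explicit normalization is exactly what makes $D_{M_n}$ behave like an approximate identity in later arguments.
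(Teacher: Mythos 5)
Your proof is correct and is exactly the standard argument: Paley's Lemma gives $D_{M_n}=M_n\chi_{I_n}$, and since $\mu(I_n)=1/M_n$ the $L^1$-norm equals $1$. The paper simply cites \cite{AVD} for this fact without writing out a proof, and the computation you give is precisely the intended one, so there is nothing to add.
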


\begin{lemma} (see \cite{AVD} and \cite{gol}) \label{dn2.6}
	Let $x\in I_s\backslash I_{s+1}, \ \  s=0,...,N-1.$  Then
	
	\begin{equation*}
	\left\vert D_n\left(x\right)\right\vert \leq cM_s
	\end{equation*}
	and
	\begin{equation*}
	\int_{I_N}\left\vert D_n\left(x-t\right)\right\vert d\mu\left(
	t\right)\leq\frac{cM_s}{M_N},
	\end{equation*}
	where $c$ is an absolute constant.
\end{lemma}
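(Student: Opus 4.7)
The plan is to derive the pointwise estimate first via the explicit formula for $D_n$ given by Lemma \ref{dn2.1}, and then deduce the integral estimate by exploiting the translation invariance of the sets $I_s\setminus I_{s+1}$ with respect to the group operation.

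First I would start from identity \eqref{2dn}, which expresses $D_n$ as a linear combination of Paley kernels $D_{M_j}$ weighted by modular characters. Taking absolute values and using $|r_j|=1$ gives
$$|D_n(x)|\leq \sum_{j=0}^{\infty}|D_{M_j}(x)|\cdot n_j\leq \sum_{j=0}^{\infty}(m_j-1)\,|D_{M_j}(x)|.$$
By Paley's Lemma (Lemma \ref{dn2.3}), $D_{M_j}(x)$ vanishes unless $x\in I_j$, and on $I_j$ it equals $M_j$. For $x\in I_s\setminus I_{s+1}$ we have $x\in I_j$ exactly for $j\leq s$, so only the terms with $j\leq s$ survive, yielding
$$|D_n(x)|\leq \sum_{j=0}^{s}(m_j-1)M_j.$$
Since we are in the bounded Vilenkin setting, $m_j\leq C$ for all $j$, and since $M_{j+1}=m_jM_j\geq 2M_j$, the sum $\sum_{j=0}^{s}M_j$ is a geometric-type sum dominated by $cM_s$. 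This gives the first inequality $|D_n(x)|\leq cM_s$.

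For the integral estimate, the key observation is that when $t\in I_N$, the first $N$ coordinates of $t$ vanish, so $(x-t)_j=x_j$ for every $j\leq N-1$. Since $s\leq N-1$ in the hypothesis, this means that subtracting $t$ does not disturb any of the coordinates $x_0,\dots,x_s$. In particular $(x-t)_j=0$ for $j<s$ and $(x-t)_s=x_s\neq 0$, so $x-t\in I_s\setminus I_{s+1}$ whenever $x$ is. Therefore I can apply the pointwise bound just proved uniformly in $t\in I_N$:
$$\int_{I_N}|D_n(x-t)|\,d\mu(t)\leq cM_s\cdot\mu(I_N)=\frac{cM_s}{M_N},$$
which is the second inequality.

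There is no real obstacle here; both parts reduce to bookkeeping once one has \eqref{2dn} and Paley's Lemma in hand. The only thing to be careful about is verifying that the coordinatewise subtraction in the Vilenkin group really preserves membership in $I_s\setminus I_{s+1}$ when one subtracts an element of $I_N$ with $N>s$, which is transparent from the definition of $\widehat{-}$ and the fact that $\ominus$ acts coordinatewise.
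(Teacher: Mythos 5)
Your proof is correct. The paper does not reproduce a proof of this lemma (it is quoted from the cited monographs), but your argument — bounding $\left\vert D_n\right\vert$ through the representation \eqref{2dn} together with Paley's Lemma \ref{dn2.3}, noting $\sum_{j=0}^{s}(m_j-1)M_j\leq cM_s$ in the bounded Vilenkin setting, and then observing that subtracting $t\in I_N$ leaves the first $N$ coordinates of $x$ unchanged so that $x-t\in I_s\backslash I_{s+1}$ and the pointwise bound integrates to $cM_s/M_N$ — is exactly the standard route, with the constant $c$ depending only on $\sup_{n}m_n$ as is the usual convention here.
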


It is obvious that
\begin{eqnarray*}
\sigma_nf\left(x\right) &=&\frac{1}{n}\overset{n-1}{\underset{k=0}{\sum}}
\left(D_k\ast f\right)\left(x\right) \\
&=&\left(f\ast K_n\right)\left(x\right)=\int_{G_m}f\left(t\right)
K_n\left(x-t\right)d\mu \left(t\right),
\end{eqnarray*}
where $ K_n $ are the so called Fej\'er kernels: \index{\file}{Fej\'er kernels} \index{\file-1}{$K_n$}

\begin{eqnarray*}
K_n:=\frac{1}{n}\overset{n-1}{\underset{k=0}{\sum}}D_k.
\end{eqnarray*}

Using Abel transformation we get another representation of Fej\'er means

$$\sigma_nf\left(x\right)=\overset{n-2}{\underset{k=0}{\sum}} \left(1-\frac{k}{n}\right) \widehat{f}\left( k\right)\psi_k\left(x\right)$$

We frequently use the following well-known result:

\begin{lemma} (see \cite{gat}) \label{lemma2}
 Let $n>t,$ $t,n\in \mathbb{N}.$ Then \index{\file-1}{$K_{M_n}$}
 
\begin{equation*}
K_{M_n}\left(x\right)=\left\{ \begin{array}{ll}
\frac{M_t}{1-r_t\left(x\right) },& x\in I_t\backslash I_{t+1},\qquad x-x_te_t\in I_n, \\
\frac{M_n+1}{2}, & x\in I_n, \\
0, & \text{otherwise. } \end{array} \right.
\end{equation*}
\end{lemma}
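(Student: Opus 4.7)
The plan is to start by interchanging summations: from
\[
M_nK_{M_n}(x)=\sum_{k=0}^{M_n-1}D_k(x)=\sum_{k=0}^{M_n-1}\sum_{j=0}^{k-1}\psi_j(x)
\]
one obtains the compact expression $M_nK_{M_n}(x)=\sum_{j=0}^{M_n-1}(M_n-1-j)\,\psi_j(x)$. The rest of the argument evaluates this single sum in three disjoint regimes determined by the coordinates $x_0,\ldots,x_{n-1}$.

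If $x\in I_n$, then $r_i(x)=1$ for every $i<n$, so $\psi_j(x)=1$ for all $j<M_n$ and the sum collapses to $\sum_{j=0}^{M_n-1}(M_n-1-j)$, giving the announced constant. In the generic case $x\in I_t\setminus I_{t+1}$ with $x-x_te_t\in I_n$, the coordinates of $x$ below $n$ vanish except at position $t$, so $\psi_j(x)=r_t(x)^{j_t}$, where $j_t$ denotes the $t$-th digit of $j$. One then writes $j=aM_{t+1}+j_tM_t+b$ with $0\le b<M_t$, $0\le j_t<m_t$ and $0\le a<M_n/M_{t+1}$, and performs the inner sum in $j_t$ with $\omega:=r_t(x)$. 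The part of $M_n-1-j$ independent of $j_t$ is killed by $\sum_{j_t=0}^{m_t-1}\omega^{j_t}=0$, while the term $-j_tM_t$ produces $-M_t\sum_{j_t}j_t\omega^{j_t}$. Here the key identity is
\[
\sum_{j=0}^{m-1}j\,\omega^j=\frac{m}{\omega-1}\qquad(\omega^m=1,\ \omega\neq 1),
\]
proved by observing that $T-\omega T=-m$ where $T$ is the sum. Plugging this in makes the inner sum equal $M_{t+1}/(1-\omega)$, independent of $a$ and $b$; summing over the remaining $(M_n/M_{t+1})\cdot M_t$ pairs $(a,b)$ yields $M_nM_t/(1-r_t(x))$, matching the stated formula.

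For the \emph{otherwise} case, $x$ has at least two non-vanishing coordinates among $x_0,\ldots,x_{n-1}$; let $t_1$ be the smallest such index and pick any other index $t_2<n$ with $x_{t_2}\neq 0$. Running the same digit decomposition on $j_{t_1}$ still produces the factor $M_{t_1+1}/(1-r_{t_1}(x))$ from the $j_{t_1}$-dependent piece, and leaves a residual double sum that factors across the remaining digits of $j$. One of its factors is $\sum_{j_{t_2}=0}^{m_{t_2}-1}r_{t_2}(x)^{j_{t_2}}=0$, which annihilates the whole expression, giving $K_{M_n}(x)=0$.

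The main obstacle I expect is the bookkeeping in the digit decomposition $j=aM_{t+1}+j_tM_t+b$ together with the mixed dependence of the weight $M_n-1-j$ on those digits. Once this splitting is set up cleanly and the summation formula $\sum_{j=0}^{m-1}j\omega^j=m/(\omega-1)$ is in hand, the three cases drop out with only routine algebra and the orthogonality $\sum_{j=0}^{m-1}\omega^j=0$ for nontrivial $m$-th roots of unity.
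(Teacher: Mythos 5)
The paper does not actually prove this lemma; it is quoted from G\'at's work, so there is no internal proof to compare with. Your closed-form computation is the standard and essentially correct way to verify it: reduce to $\sum_{j<M_n}(\text{weight})\psi_j(x)$, use that on the relevant sets $\psi_j(x)$ depends only on the digit $j_t$ (resp.\ on $j_{t_1},j_{t_2},\dots$), and finish with $\sum_{j=0}^{m-1}\omega^j=0$ and $\sum_{j=0}^{m-1}j\omega^j=m/(\omega-1)$ for a nontrivial $m$-th root of unity $\omega$; your treatment of the middle case (the inner $j_t$-sum giving $M_{t+1}/(1-r_t(x))$ independently of $(a,b)$, then counting $(M_n/M_{t+1})M_t$ pairs) and of the ``otherwise'' case (factorization across digits with a vanishing factor at $t_2$) is sound.

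One point needs repair: with your starting identity $M_nK_{M_n}=\sum_{k=0}^{M_n-1}D_k$, the weight is $M_n-1-j$ and on $I_n$ the sum is $M_n(M_n-1)/2$, i.e.\ you get $(M_n-1)/2$, not the announced $(M_n+1)/2$. The discrepancy comes from the paper's two mutually inconsistent conventions ($K_n=\frac1n\sum_{k=0}^{n-1}D_k$ versus $\sigma_nf=\frac1n\sum_{k=1}^{n}S_kf$); the lemma as stated corresponds to $M_nK_{M_n}=\sum_{k=1}^{M_n}D_k$, i.e.\ weight $M_n-j$. Starting from that convention fixes the $I_n$ constant, and your other two cases are unaffected, since the two weights differ by the term $D_{M_n}(x)$, which vanishes off $I_n$. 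With that adjustment your argument is a complete, self-contained proof.
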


The proof of the next lemma can easily be done by using Lemma  \ref{lemma2}:

\begin{lemma} \label{lemma222}
	Let $n\in \mathbb{N}$ and $x\in I_N^{k,l},$ where $k<l.$
	Then
	
	\begin{equation} \label{star1}
	K_{M_n}\left(x\right)=0,\text{ \ if \ } n>l,
	\end{equation}
	
	\begin{equation} \label{star2}
	\left\vert K_{M_n}\left(x\right)\right\vert\leq cM_k,
	\end{equation}
	and

	\begin{equation} \label{fn50}
	\left\vert K_{M_n}(x)\right\vert\leq c\sum_{s=0}^ {n}M_s\sum_{r_s=1}^{m_s-1}\chi_{I_n(x-r_se_s)}
	\end{equation}%
	Moreover,
	
	\begin{equation} \label{star3}
	\int_{G_m}\left\vert K_{M_n}\right\vert d\mu\leq c<\infty,
	\end{equation}%
	where $c$ is an absolute constant.
\end{lemma}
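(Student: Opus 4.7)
The plan is to read off everything directly from Lemma \ref{lemma2} after unpacking what $x\in I_N^{k,l}$ means in terms of the coordinate representation of $x$.

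First I would fix $x\in I_N^{k,l}$ with $k<l$ and record the relevant coordinates: by the definition of $I_N^{k,l}$, $x_0=\cdots=x_{k-1}=0$, $x_k\ne 0$, $x_{k+1}=\cdots=x_{l-1}=0$, and $x_l\ne 0$. In particular $x\in I_k\setminus I_{k+1}$, which is exactly the setting of the first branch of Lemma \ref{lemma2} with $t=k$. The auxiliary point $x-x_ke_k$ then has all coordinates $0$ up through index $l-1$ except that its $l$-th coordinate equals $x_l\ne 0$; hence $x-x_ke_k\in I_l\setminus I_{l+1}$.

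For \eqref{star1}: if $n>l$, then the $l$-th coordinate obstruction gives $x-x_ke_k\notin I_n$, and also $x\notin I_n$ because $x_k\ne 0$ with $k<l<n$. Thus we fall into the ``otherwise'' branch of Lemma \ref{lemma2}, giving $K_{M_n}(x)=0$. For \eqref{star2}: when instead $n\le l$, the same $x-x_ke_k$ does lie in $I_n$, so Lemma \ref{lemma2} yields $K_{M_n}(x)=M_k/(1-r_k(x))$. Since $x_k\ne 0$ we have $r_k(x)=\exp(2\pi\ii x_k/m_k)\ne 1$, and because $m_k$ stays bounded in the bounded Vilenkin setting, $|1-r_k(x)|$ is bounded away from $0$ by a constant depending only on $\sup_n m_n$. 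This yields $|K_{M_n}(x)|\le cM_k$.

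For \eqref{fn50}, I would re-read Lemma \ref{lemma2} in a coordinate-free way: $K_{M_n}$ vanishes off the set $I_n\cup\bigcup_{s<n}\bigcup_{r_s=1}^{m_s-1}(r_se_s+I_n)$, and on the piece indexed by $(s,r_s)$ its modulus is at most $cM_s$ (with the $s=n$ piece carrying the $(M_n+1)/2$ term). Rewriting $r_se_s+I_n=I_n(r_se_s)$ and noting that the indicator function of $I_n(r_se_s)$ evaluated at $x$ equals $\chi_{I_n(x-r_se_s)}$ in the paper's notation yields exactly the stated pointwise majorization. Finally for \eqref{star3} I would integrate \eqref{fn50}: each set $I_n(r_se_s)$ has measure $1/M_n$, so
\begin{equation*}
\int_{G_m}|K_{M_n}|\,d\mu\le c\sum_{s=0}^{n}M_s(m_s-1)\cdot\frac{1}{M_n}\le c\sum_{s=0}^{n}\frac{M_{s+1}}{M_n},
\end{equation*}
which is a finite geometric sum bounded by a constant because $m_j\ge 2$ and $\sup_j m_j<\infty$.

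The only point requiring real care is \eqref{fn50}: one has to verify that the double-sum description faithfully encodes every nonzero branch of Lemma \ref{lemma2} (including the ``diagonal'' $x\in I_n$ piece, which corresponds to $s=n$, $r_n=0$, or can be absorbed into the $s=n$ term depending on how one reads the index range), and to interpret $\chi_{I_n(x-r_se_s)}$ correctly as a function of $x$. Once that bookkeeping is done, \eqref{star1}, \eqref{star2} and \eqref{star3} fall out immediately.
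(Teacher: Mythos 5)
Your proposal is correct and follows essentially the paper's own route: the paper gives no detailed argument, remarking only that the lemma ``can easily be done by using Lemma \ref{lemma2}'', and you carry out exactly that derivation (branch-by-branch case analysis, the bound $|1-r_k(x)|^{-1}\leq c$ from $\sup_n m_n<\infty$, and integration of \eqref{fn50} using $\mu\left(I_n(r_se_s)\right)=1/M_n$). One small caveat: the property $x_l\neq 0$ that you read off from the definition of $I_N^{k,l}$ is only guaranteed in the branch $k<l<N$, so your argument for \eqref{star1} covers that (intended) case, while for $l=N$ the definition leaves the coordinate $x_N$ free and \eqref{star1} has to be read accordingly.
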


We also need the following useful result:

\begin{lemma}(see \cite{AVD} and \cite{gol}) \label{lemma6kn}
Let $t,s_n, \ \ n\in \mathbb{N},$ and $1\leq s_n\leq m_n-1$. Then \index{\file-1}{$K_{s_nM_n}$}

\begin{equation} \label{mag1}
s_nM_nK_{s_nM_n}=\sum_{l=0}^{s_n-1}\left(\sum_{i=0}^{l-1}r_n^i\right)M_nD_{M_n}+\left(\sum_{l=0}^{s_n-1}r_n^{l}\right)M_nK_{M_n}
\end{equation}%
\end{lemma}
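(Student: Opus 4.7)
The plan is to start from the definition $s_nM_nK_{s_nM_n} = \sum_{k=0}^{s_nM_n-1} D_k$ and split this sum dyadically into $s_n$ blocks of length $M_n$, writing
\begin{equation*}
s_nM_nK_{s_nM_n} = \sum_{l=0}^{s_n-1}\sum_{j=0}^{M_n-1} D_{lM_n+j}.
\end{equation*}
The goal is then to decompose each $D_{lM_n+j}$ into a piece depending only on $l$ (which, after summing over $j$, produces a factor of $M_n$ and gives the first term on the right-hand side) plus a piece proportional to $D_j$ (which, after summing over $j$, produces $M_nK_{M_n}$ and contributes to the second term).

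The key intermediate step is the identity
\begin{equation*}
D_{lM_n+j} = D_{lM_n} + r_n^l D_j, \qquad 0\le j<M_n, \quad 0\le l\le s_n-1,
\end{equation*}
which I plan to establish by induction on $l$, using the first formula in Lemma \ref{dn}. The base case $l=0$ is trivial ($D_0\equiv 0$), while the case $l=1$ is exactly \eqref{dn21}. For the inductive step, since $(l-1)M_n+j\le(m_n-1)M_n$, \eqref{dn21} gives $D_{lM_n+j}=D_{M_n}+r_nD_{(l-1)M_n+j}$, and the inductive hypothesis together with $D_{M_n}+r_n D_{(l-1)M_n}=D_{lM_n}$ (which itself follows from \eqref{9dn} of Lemma \ref{dn2.1}) yields the claim.

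Substituting this identity and summing over $j$ from $0$ to $M_n-1$ gives
\begin{equation*}
s_nM_nK_{s_nM_n} = \sum_{l=0}^{s_n-1} M_n D_{lM_n} + \left(\sum_{l=0}^{s_n-1} r_n^l\right)\sum_{j=0}^{M_n-1} D_j,
\end{equation*}
where the last inner sum equals $M_nK_{M_n}$ by definition of the Fej\'er kernel. Finally, applying \eqref{9dn} once more in the form $D_{lM_n}=D_{M_n}\sum_{i=0}^{l-1}r_n^i$ converts the first term into $\sum_{l=0}^{s_n-1}\bigl(\sum_{i=0}^{l-1}r_n^i\bigr)M_nD_{M_n}$, which is exactly \eqref{mag1}.

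The only nontrivial part is the intermediate identity $D_{lM_n+j}=D_{lM_n}+r_n^l D_j$; once this is in hand, the rest is a straightforward rearrangement using Lemmas \ref{dn} and \ref{dn2.1}. I anticipate that verifying the induction cleanly — in particular keeping track of the range restriction $l\le s_n-1\le m_n-1$ so that \eqref{dn21} remains applicable — will be the main, though still routine, technical point.
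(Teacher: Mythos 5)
Your proof is correct. Note that the paper itself gives no proof of this lemma (it is quoted from \cite{AVD} and \cite{gol}), so there is nothing internal to compare against; your block decomposition $\sum_{k=0}^{s_nM_n-1}D_k=\sum_{l=0}^{s_n-1}\sum_{j=0}^{M_n-1}D_{lM_n+j}$, together with the inductively established identity $D_{lM_n+j}=D_{lM_n}+r_n^lD_j$ (which indeed follows from \eqref{dn21} since $(l-1)M_n+j<(m_n-1)M_n$ for $l\le s_n-1\le m_n-2$) and the conversion $D_{lM_n}=D_{M_n}\sum_{i=0}^{l-1}r_n^i$ from \eqref{9dn}, is a clean, self-contained derivation of \eqref{mag1} using only lemmas already stated in the paper.
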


The next equality for Fej\'er kernels is very important for our further investigations:

\begin{lemma}(see \cite{AVD} and \cite{gol}) \label{lemma4}
Let $n=\sum_{i=1}^rs_{n_i}M_{n_i}$, where
$n_1>n_2>\dots>n_r\geq 0$ and $1\leq s_{n_i}<m_{n_i}$ \ for all
$1\leq i\leq r$ as well as $n^{(k)}=n-\sum_{i=1}^ks_{n_i}M_{n_i}$,
where $0<k\leq r$. Then

\begin{eqnarray}\label{starkn}
nK_n&=&\sum_{k=1}^r\left(\prod_{j=1}^{k-1}r_{n_j}^{s_{n_j}}\right)
s_{n_k}M_{n_k}K_{s_{n_k}M_{n_k}}\\ \notag
&+&\sum_{k=1}^{r-1}\left(\prod_{j=1}^{k-1}r_{n_j}^{s_{n_j}}\right) n^{(k)}D_{s_{n_k}M_{n_k}}.
\end{eqnarray}
\end{lemma}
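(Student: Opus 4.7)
The plan is to argue by induction on the number $r$ of nonzero digits, with the base case $r=1$ being immediate: here $n=s_{n_1}M_{n_1}$, the second sum is empty, and the first sum collapses to its single term $s_{n_1}M_{n_1}K_{s_{n_1}M_{n_1}}$.

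For the inductive step, I would split the Cesàro sum at the highest-order piece. Writing $n = s_{n_1}M_{n_1} + n^{(1)}$, where $n^{(1)} = \sum_{i=2}^{r} s_{n_i} M_{n_i}$, the identity
\begin{equation*}
nK_n \;=\; \sum_{j=0}^{n-1}D_j \;=\; \sum_{j=0}^{s_{n_1}M_{n_1}-1}D_j \;+\; \sum_{l=0}^{n^{(1)}-1}D_{s_{n_1}M_{n_1}+l}
\end{equation*}
immediately produces the term $s_{n_1}M_{n_1}K_{s_{n_1}M_{n_1}}$ from the first sum. The key observation is that $n^{(1)} < M_{n_1}$ since $n_i<n_1$ for $i\ge 2$; hence for every $l$ in the second sum, $l<M_{n_1}$ and I may iterate identity \eqref{dn21} of Lemma \ref{dn} $s_{n_1}$ times to obtain
\begin{equation*}
D_{s_{n_1}M_{n_1}+l} \;=\; \Bigl(\sum_{k=0}^{s_{n_1}-1}r_{n_1}^{k}\Bigr)D_{M_{n_1}} \;+\; r_{n_1}^{s_{n_1}}D_l \;=\; D_{s_{n_1}M_{n_1}} \;+\; r_{n_1}^{s_{n_1}}D_l,
\end{equation*}
where the last equality uses \eqref{9dn} from Lemma \ref{dn2.1}. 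Summing over $l=0,\dots,n^{(1)}-1$ and recognizing the partial Cesàro sum $\sum_{l=0}^{n^{(1)}-1}D_l = n^{(1)}K_{n^{(1)}}$ yields
\begin{equation*}
nK_n \;=\; s_{n_1}M_{n_1}K_{s_{n_1}M_{n_1}} \;+\; n^{(1)}D_{s_{n_1}M_{n_1}} \;+\; r_{n_1}^{s_{n_1}}\, n^{(1)}K_{n^{(1)}}.
\end{equation*}

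Now $n^{(1)}$ has exactly $r-1$ nonzero Vilenkin digits $s_{n_2},\dots,s_{n_r}$, so the inductive hypothesis applies and expands $n^{(1)}K_{n^{(1)}}$ as the asserted sum running from $k=2$ to $r$ (and $k=2$ to $r-1$), with $(n^{(1)})^{(k-1)} = n^{(k)}$. Multiplying through by $r_{n_1}^{s_{n_1}}$ converts the empty product starting at $j=2$ into $\prod_{j=1}^{k-1}r_{n_j}^{s_{n_j}}$, and combining with the explicit $k=1$ terms produced above gives exactly \eqref{starkn}.

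The only genuinely delicate point is verifying that \eqref{dn21} may be applied $s_{n_1}$ times, which requires the estimate $l + (s-1)M_{n_1} \le (m_{n_1}-1)M_{n_1}$ for $s=1,\dots,s_{n_1}$; this holds because $l < M_{n_1}$ and $s_{n_1} \le m_{n_1}-1$. Everything else is careful bookkeeping of the products of Rademacher factors.
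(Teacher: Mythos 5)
Your argument is correct: the split of $nK_n=\sum_{j=0}^{n-1}D_j$ at $s_{n_1}M_{n_1}$, the iterated use of \eqref{dn21} together with \eqref{9dn} to get $D_{s_{n_1}M_{n_1}+l}=D_{s_{n_1}M_{n_1}}+r_{n_1}^{s_{n_1}}D_l$ for $l<M_{n_1}$, and the induction on $r$ with the observation $(n^{(1)})^{(k-1)}=n^{(k)}$ all check out, including the index condition $l+(s-1)M_{n_1}\leq (m_{n_1}-1)M_{n_1}$. The thesis itself does not reproduce a proof of this lemma (it is quoted from \cite{AVD} and \cite{gol}), and your induction is precisely the standard argument given in those sources, so nothing is missing.
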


We will also frequently use the next estimation of the Fej\'{e}r kernels:

\begin{corollary}\label{lemma7kn0}
Let $n\in \mathbb{N}.$ Then

\begin{equation} \label{fn5}
n\left\vert K_n\right\vert\leq c\sum_{l=\left\langle n\right\rangle}^ {\left\vert n\right\vert}M_l\left\vert K_{M_l}\right\vert\leq
c\sum_{l=0}^{\left\vert n\right\vert }M_l\left\vert K_{M_l}\right\vert
\end{equation}%
where $c$ is an absolute constant.
\end{corollary}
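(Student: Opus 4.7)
The plan is to start from the decomposition of $nK_n$ given by Lemma \ref{lemma4} and reduce every building block to quantities of the form $M_{n_k}\lvert K_{M_{n_k}}\rvert$, exploiting that only the nonzero digits $n_1>n_2>\cdots>n_r$ in the expansion $n=\sum_{i=1}^r s_{n_i}M_{n_i}$ contribute (where $n_1=\lvert n\rvert$ is the largest and $n_r=\langle n\rangle$ is the smallest nonzero index).

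First I would take absolute values in the identity
\begin{equation*}
nK_n=\sum_{k=1}^r\left(\prod_{j=1}^{k-1}r_{n_j}^{s_{n_j}}\right)s_{n_k}M_{n_k}K_{s_{n_k}M_{n_k}}+\sum_{k=1}^{r-1}\left(\prod_{j=1}^{k-1}r_{n_j}^{s_{n_j}}\right)n^{(k)}D_{s_{n_k}M_{n_k}},
\end{equation*}
noting that $\lvert r_{n_j}\rvert=1$ kills the Rademacher prefactors. So it suffices to bound $\lvert s_{n_k}M_{n_k}K_{s_{n_k}M_{n_k}}\rvert$ and $n^{(k)}\lvert D_{s_{n_k}M_{n_k}}\rvert$ by a constant multiple of $M_{n_k}\lvert K_{M_{n_k}}\rvert$.

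For the first piece I would invoke Lemma \ref{lemma6kn}, giving
\begin{equation*}
\lvert s_{n_k}M_{n_k}K_{s_{n_k}M_{n_k}}\rvert\leq s_{n_k}^2M_{n_k}\lvert D_{M_{n_k}}\rvert+s_{n_k}M_{n_k}\lvert K_{M_{n_k}}\rvert,
\end{equation*}
and use $s_{n_k}\leq m_{n_k}-1\leq c$ (boundedness of the Vilenkin group). For the second piece, Lemma \ref{dn2.1} combined with $n^{(k)}<M_{n_k}$ (the remaining digits live below index $n_k$) gives
\begin{equation*}
n^{(k)}\lvert D_{s_{n_k}M_{n_k}}\rvert\leq M_{n_k}\cdot s_{n_k}\lvert D_{M_{n_k}}\rvert\leq cM_{n_k}\lvert D_{M_{n_k}}\rvert.
\end{equation*}
Thus both reduce to the task of dominating $M_{n_k}\lvert D_{M_{n_k}}\rvert$ by $M_{n_k}\lvert K_{M_{n_k}}\rvert$.

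The key pointwise observation (this is the main step, though not deep) is that Paley's Lemma \ref{dn2.3} yields $D_{M_n}=M_n\chi_{I_n}$, while Lemma \ref{lemma2} gives $K_{M_n}=(M_n+1)/2$ on $I_n$; consequently $\lvert D_{M_n}\rvert\leq 2\lvert K_{M_n}\rvert$ on $G_m$. Multiplying by $M_n$ converts the $D_{M_{n_k}}$ terms into $M_{n_k}\lvert K_{M_{n_k}}\rvert$. Assembling everything yields
\begin{equation*}
n\lvert K_n\rvert\leq c\sum_{k=1}^rM_{n_k}\lvert K_{M_{n_k}}\rvert,
\end{equation*}
and since $n_r=\langle n\rangle$ and $n_1=\lvert n\rvert$, inserting (nonnegative) zero terms for the vanishing digits gives the first claimed inequality; the second is trivial by extending the sum down to $l=0$. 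The only mildly delicate step will be verifying $n^{(k)}<M_{n_k}$ and confirming that the index set $\{n_1,\ldots,n_r\}\subset\{\langle n\rangle,\ldots,\lvert n\rvert\}$, both of which follow directly from the definition of the Vilenkin expansion.
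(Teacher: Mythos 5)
Your proof is correct and follows exactly the route the paper intends for this corollary: take absolute values in the decomposition of $nK_n$ from Lemma \ref{lemma4}, control $s_{n_k}M_{n_k}K_{s_{n_k}M_{n_k}}$ via Lemma \ref{lemma6kn} and $D_{s_{n_k}M_{n_k}}$ via Lemma \ref{dn2.1}, use $s_{n_k}\leq m_{n_k}-1\leq c$ and $n^{(k)}<M_{n_k}$, and dominate $\left\vert D_{M_{n_k}}\right\vert$ by $2\left\vert K_{M_{n_k}}\right\vert$ through Paley's Lemma \ref{dn2.3} and Lemma \ref{lemma2}. Restricting the resulting sum to the nonzero digit indices between $\left\langle n\right\rangle$ and $\left\vert n\right\vert$ and then extending to $l=0$ is exactly how \eqref{fn5} is obtained, so there is nothing to correct.
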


\begin{lemma}(see \cite{AVD} and \cite{gol}) \label{lemma7kn}
Let $n\in \mathbb{N}.$ Then, for any $n,N\in \mathbb{N_+}$, we have that
\begin{eqnarray} \label{fn40}
&&\int_{G_m} K_n (x)d\mu(x)=1,\\
&& \label{fn4}
\sup_{n\in\mathbb{N}}\int_{G_m}\left\vert K_n(x)\right\vert d\mu(x)\leq c<\infty,\\
&& \label{fn400}
\sup_{n\in\mathbb{N}}\int_{G_m \backslash I_N}\left\vert K_n(x)\right\vert d\mu (x)\rightarrow  0, \ \ \text{as} \ \ n\rightarrow  \infty, 
\end{eqnarray}
where $c$ is an absolute constant.
\end{lemma}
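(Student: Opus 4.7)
The plan is to prove the three assertions consecutively, using the Fejér-kernel identities of Lemmas \ref{lemma2}--\ref{lemma7kn0}.

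For (\ref{fn40}), the cleanest route is to compute $\sigma_n$ applied to the constant function $f\equiv 1$ in two ways. By the Abel-summed representation of $\sigma_n f$ together with Proposition \ref{prop1.5}, one has $\sigma_n 1\equiv 1$, since $\widehat{1}(0)=1$ while $\widehat{1}(k)=0$ for $k\neq 0$. On the other hand, $\sigma_n 1(x)=(1\ast K_n)(x)=\int_{G_m}K_n\,d\mu$, which is independent of $x$. Equating the two expressions yields $\int_{G_m}K_n\,d\mu=1$.

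For (\ref{fn4}), the estimate follows from Corollary \ref{lemma7kn0} together with the uniform bound (\ref{star3}) for the dyadic Fejér kernels. Integrating gives
\begin{equation*}
\int_{G_m}|K_n|\,d\mu\;\le\;\frac{c}{n}\sum_{l=0}^{|n|}M_l\int_{G_m}|K_{M_l}|\,d\mu\;\le\;\frac{c}{n}\sum_{l=0}^{|n|}M_l.
\end{equation*}
In a bounded Vilenkin group each $m_l$ lies in $[2,\sup_k m_k]$, so $M_l$ grows geometrically and $\sum_{l=0}^{|n|}M_l\le cM_{|n|}\le cn$, producing the claimed uniform bound.

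For (\ref{fn400}), which I interpret as "uniformly in $n$, the integral tends to $0$ as $N\to\infty$" (the printed limit "$n\to\infty$" inside a quantity already supremized over $n$ looks like a typographical slip), the strategy is to combine the decomposition (\ref{1.1}) of $\overline{I_N}$ with the sharp pointwise bounds of Lemma \ref{lemma222}. On a piece $I_N^{k,l}$ with $k<l$, the vanishing statement (\ref{star1}) kills every summand in Corollary \ref{lemma7kn0} with $s>l$, while (\ref{star2}) bounds the remaining ones by $cM_k$, yielding
\begin{equation*}
|K_n(x)|\;\le\;\frac{c\,M_k\,M_{\min(|n|,l)}}{n}\qquad(x\in I_N^{k,l}).
\end{equation*}
Using $\mu(I_N^{k,l})\le c/M_l$ (with an analogous estimate on the boundary pieces $I_N^{k,N}$) and summing over the index pairs in (\ref{1.1}), with separate treatment of the ranges $l\le|n|$ and $l>|n|$, produces a bound that depends only on $N$ and decays to $0$ as $N\to\infty$ by the geometric growth of $M_l$. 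The main obstacle is precisely this bookkeeping: one must check that the resulting double sum over $0\le k<l\le N$ behaves like a convergent geometric tail in $N$, and that the two regimes in $l$ combine to a bound uniform in $n$.
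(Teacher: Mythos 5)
Your arguments for \eqref{fn40} and \eqref{fn4} are fine (note the paper itself gives no proof of this lemma, only a citation): the identity $\sigma_n1=1\ast K_n$ together with Proposition \ref{prop1.5} yields \eqref{fn40}, and \eqref{fn5} combined with \eqref{star3} and the geometric growth of $M_l$ (bounded Vilenkin group, $M_{|n|}\le n$) yields \eqref{fn4}.

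The genuine gap is in \eqref{fn400}. You resolve the garbled quantifiers by reading it as ``$\sup_{n\in\mathbb{N}}\int_{G_m\setminus I_N}|K_n|\,d\mu\to0$ as $N\to\infty$'', but that statement is false: for $n=1$ we have $K_1=D_1\equiv1$, so $\int_{G_m\setminus I_N}|K_1|\,d\mu=1-1/M_N\ge 1/2$ for every $N$, and the supremum over all $n$ never becomes small. This is why your bookkeeping over the pieces $I_N^{k,l}$ cannot close: no estimate can make the quantity decay in $N$ uniformly in $n$. The intended meaning --- and the one actually used later in the paper (see the proof of Lemma \ref{Corollary3nn101}, where $\int_{G_m\setminus I_N}|K_j|\,d\mu=:\alpha_j\to0$ as $j\to\infty$ with $N$ fixed) --- is: for every fixed $N$, $\int_{G_m\setminus I_N}|K_n|\,d\mu\to0$ as $n\to\infty$. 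This version does follow from the tools you invoke. By Lemma \ref{lemma2} (with $|1-r_t(x)|\ge c>0$ on a bounded group), for $l\ge N$ the kernel $K_{M_l}$ vanishes on $G_m\setminus I_N$ except on the sets $\{x\in I_t\setminus I_{t+1}:x-x_te_t\in I_l\}$ with $t<N$, each of measure at most $m_t/M_l$, on which $|K_{M_l}|\le cM_t$; hence $\int_{G_m\setminus I_N}|K_{M_l}|\,d\mu\le cM_N/M_l$ for $l\ge N$, while for $l<N$ one just uses \eqref{star3}. Plugging these into \eqref{fn5} gives
\begin{equation*}
\int_{G_m\setminus I_N}|K_n|\,d\mu\;\le\;\frac{c}{n}\Bigl(\sum_{l=0}^{N-1}M_l+\sum_{l=N}^{|n|}M_l\cdot\frac{M_N}{M_l}\Bigr)\;\le\;\frac{c\,M_N\left(|n|+1\right)}{n}\longrightarrow 0\qquad (n\to\infty,\ N\ \text{fixed}),
\end{equation*}
which is the assertion needed for the approximate-identity property and for the later applications in the paper.
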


\begin{lemma}\label{lemma0nnT12} (see \cite{BPTW,bpt1,bpt2,ptw}) Let $\{q_k:k\in\mathbb{N}\}$ be a sequence of non-decreasing numbers, satisfying the condition \eqref{fn01}. Then for any  $ n, N\in \mathbb{N_+},$
	\begin{eqnarray} \label{1.711}
	&&\int_{G_m} F^{-1}_n(x) d\mu (x)=1, \\
	&&\sup_{n\in\mathbb{N}}\int_{G_m}\left\vert F^{-1}_n(x)\right\vert d\mu(x)\leq c<\infty,\label{1.721} \\
	&&\sup_{n\in\mathbb{N}}\int_{G_m \backslash I_N}\left\vert F^{-1}_n(x)\right\vert d\mu (x)\rightarrow  0, \ \ \text{as} \ \ n\rightarrow  \infty, \label{1.731}
	\end{eqnarray}
	where $c$ is an absolute constant.
\end{lemma}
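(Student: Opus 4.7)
My plan is to reduce everything to the already established estimates for the Fej\'er kernels in Lemma \ref{lemma7kn}, using Abel's summation formula together with the monotonicity of $\{q_k\}$ and the regularity condition \eqref{fn01} (which I read as saying that $n q_n \leq c\, Q_n$, where $Q_n := \sum_{k=0}^{n-1} q_k$). The kernel $F_n^{-1}$ is the Dirichlet-type combination
$$F_n^{-1}(x) = \frac{1}{Q_n}\sum_{k=1}^{n} q_k D_k(x),$$
so partial summation against $kK_k = \sum_{j=0}^{k-1} D_j$ gives
$$Q_n\, F_n^{-1}(x) = \sum_{k=1}^{n-1} (q_k - q_{k+1})\, k K_k(x) + q_n\, n K_n(x).$$
Since $\{q_k\}$ is non-decreasing, the sign of $q_k - q_{k+1}$ is constant, which is the crucial point that will let triangle inequalities in what follows be tight enough.

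For \eqref{1.711}, I integrate the identity above term by term, using \eqref{fn40} which says $\int_{G_m} K_j d\mu = 1$ for all $j \in \mathbb{N}_+$. Thus
$$Q_n \int_{G_m} F_n^{-1} d\mu = \sum_{k=1}^{n-1} (q_k - q_{k+1})k + n q_n,$$
and a short Abel computation shows that this telescopes to exactly $Q_n$, proving (a).

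For \eqref{1.721}, the triangle inequality combined with monotonicity yields
$$Q_n \int_{G_m} |F_n^{-1}| d\mu \leq \sum_{k=1}^{n-1} (q_{k+1} - q_k)\, k \int_{G_m}|K_k| d\mu + n q_n \int_{G_m}|K_n| d\mu.$$
By \eqref{fn4}, every Fej\'er-kernel integral is bounded by an absolute constant $c$, so
$$Q_n \int_{G_m}|F_n^{-1}| d\mu \leq c\Bigl[\sum_{k=1}^{n-1}(q_{k+1}-q_k)\,k + n q_n\Bigr] = c\,[2nq_n - Q_n],$$
and \eqref{fn01} bounds $n q_n/Q_n$ by a constant, giving (b).

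For \eqref{1.731}, I apply exactly the same decomposition but integrate over $G_m \setminus I_N$ instead of $G_m$. Setting $\varepsilon_N := \sup_{k \in \mathbb{N}} \int_{G_m \setminus I_N} |K_k| d\mu$, the Fej\'er-kernel estimate \eqref{fn400} gives $\varepsilon_N \to 0$ as $N \to \infty$, and the same majorization as above produces
$$Q_n \int_{G_m \setminus I_N} |F_n^{-1}| d\mu \leq \varepsilon_N \Bigl[\sum_{k=1}^{n-1}(q_{k+1}-q_k)\,k + n q_n\Bigr] \leq c\,\varepsilon_N\, Q_n,$$
so $\sup_n \int_{G_m \setminus I_N}|F_n^{-1}| d\mu \leq c\,\varepsilon_N \to 0$ as $N\to\infty$. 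I expect the main obstacle is precisely this last step: one must be careful that the estimate from (b) is uniform in $n$ and that the number of summands does not degrade the pointwise-in-$k$ control \eqref{fn400}; the monotonicity of $\{q_k\}$ is exactly what allows the telescoping that converts the weighted sum back into $Q_n$, so the factor $1/Q_n$ cancels and leaves only $\varepsilon_N$.
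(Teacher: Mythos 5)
Your Abel-transform reduction is the natural route (the paper itself only cites \cite{BPTW,bpt1,bpt2,ptw} for this lemma, but it proves the analogous statement for the N\"orlund kernel $F_n$ in Lemma \ref{Corollary3nn101} by exactly this device), and parts \eqref{1.711} and \eqref{1.721} of your argument are essentially sound: summation by parts against $A_k=\sum_{j=1}^{k}D_j$ actually produces $(k+1)K_{k+1}$ rather than $kK_k$, and the telescoped value in \eqref{1.711} comes out as $\sum_{k=1}^{n}q_k$ rather than literally $Q_n=\sum_{k=0}^{n-1}q_k$, but these are off-by-one issues already present in the paper's own conventions \eqref{1.2T}--\eqref{1.4T}; likewise your reading of \eqref{fn01} as $nq_n=O(Q_n)$ is legitimate, since applying \eqref{fn01} at index $n+1$ gives $(n+1)q_n\le c\,(Q_n+q_n)$.

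The genuine gap is in \eqref{1.731}. You set $\varepsilon_N:=\sup_{k}\int_{G_m\setminus I_N}|K_k|\,d\mu$ and claim that \eqref{fn400} gives $\varepsilon_N\to0$ as $N\to\infty$. That is not what \eqref{fn400} asserts, and it is false: low-index Fej\'er kernels are spread over all of $G_m$ (for instance $K_2\equiv\tfrac12$, so $\int_{G_m\setminus I_N}|K_2|\,d\mu=\tfrac12(1-M_N^{-1})\ge\tfrac14$ for every $N$), hence $\varepsilon_N$ stays bounded away from zero. The content of \eqref{fn400}, of condition (A3), and of \eqref{1.731} itself is a limit in $n$ for each \emph{fixed} $N$: $\int_{G_m\setminus I_N}|K_n|\,d\mu\to0$ as $n\to\infty$. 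With that reading your single uniform majorization collapses, because the terms of the Abel sum with small $k$ carry tail integrals that are not small. The repair is the two-scale argument the paper uses for Lemma \ref{Corollary3nn101}: put $\beta_k:=\int_{G_m\setminus I_N}|K_k|\,d\mu$, so $\beta_k\le c$ for all $k$ and $\beta_k\to0$ as $k\to\infty$; then $\int_{G_m\setminus I_N}|F^{-1}_n|\,d\mu\le \frac{1}{Q_n}\bigl[\sum_{k=1}^{n-1}(q_{k+1}-q_k)(k+1)\beta_{k+1}+(n+1)q_n\beta_{n+1}\bigr]$. The last term is at most $c\,\beta_{n+1}\to0$ by \eqref{fn01}; in the sum, fix $\varepsilon>0$ and choose $N_0$ with $\beta_k<\varepsilon$ for $k>N_0$: the finitely many terms with $k\le N_0$ contribute at most $c_{N_0}/Q_n\to0$ (since $Q_n\ge nq_0\to\infty$), while the terms with $k>N_0$ contribute at most $\varepsilon$ times the full telescoped sum, which your estimate from part \eqref{1.721} bounds by $c\,\varepsilon$. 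Only with this splitting does \eqref{1.731} follow; as written, your proof of the third property does not go through.
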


\chapter{N\"orlund Means of Vilenkin-Fourier series in Lebesgue Spaces}\label{Ch3}
\markboth{N\"orlund Means of Vilenkin-Fourier series in Lebesgue Spaces}{}

\section{Introduction}\label{s3.1}
\markright{Introduction}

\ \ \ \ In the literature, a point $x\in G_m$ is called a Lebesgue point of   $f\in L^{1}\left( G_m\right),$ if

\[\lim_{n\rightarrow \infty }M_n\int_{I_{n}(x)}f\left(t\right) dt=f\left( x\right) \ \ \ \ \ a.e.\ x\in G_m.\]

It is well-known (for details see \cite{Tor1} and \cite{sws}) that if  $f\in L^{1}\left( G_m\right)$ then almost every point is a Lebesgue point and the following important result holds true:

\begin{proposition}\label{Snae1}
	Let $f\in L^1(G_m)$. Then, for all Lebesgue points $x$,
	
	\begin{equation*}
	\underset{n\rightarrow \infty }{\lim }S _{M_n}f(x)=f(x)
	\end{equation*}	
\end{proposition}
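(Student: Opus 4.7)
The plan is to unwind the definition of $S_{M_n}f(x)$ as a convolution and then apply Paley's Lemma to reduce the statement to the definition of a Lebesgue point. No genuine obstacle is expected; this is essentially a direct identification argument.

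First I would write
\begin{equation*}
S_{M_n}f(x)=(f\ast D_{M_n})(x)=\int_{G_m}f(t)\,D_{M_n}(x-t)\,d\mu(t),
\end{equation*}
using the convolution representation of the partial sums recorded just after the definition of $S_nf$. The key structural input is Paley's Lemma (Lemma \ref{dn2.3}), which asserts that $D_{M_n}$ is the normalized characteristic function of $I_n$, namely $D_{M_n}(y)=M_n$ when $y\in I_n$ and $D_{M_n}(y)=0$ otherwise.

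Next I would observe that the condition $x-t\in I_n$ is equivalent to $t\in I_n(x)$, because $I_n$ is a subgroup of $G_m$ and the Vilenkin intervals are precisely its translates (this is how $I_n(x)$ was defined in Section \ref{s0,2}). Substituting into the convolution integral then gives
\begin{equation*}
S_{M_n}f(x)=M_n\int_{I_n(x)}f(t)\,d\mu(t).
\end{equation*}

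Finally, by the very definition of a Lebesgue point stated at the beginning of this section, the right-hand side tends to $f(x)$ as $n\to\infty$ whenever $x$ is a Lebesgue point of $f$, which completes the proof. The argument therefore requires only Paley's Lemma together with the translation-invariance of the Haar measure; no delicate estimates on the Dirichlet kernel for general indices are needed, since the subsequence $M_n$ is precisely the one for which $D_{M_n}$ has the clean indicator-function form.
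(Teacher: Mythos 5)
Your argument is correct: the identity $S_{M_n}f(x)=M_n\int_{I_n(x)}f\,d\mu$, obtained from the convolution representation together with Paley's Lemma (Lemma \ref{dn2.3}) and the fact that $x-t\in I_n$ iff $t\in I_n(x)$, reduces the claim immediately to the definition of a Lebesgue point. The paper itself states Proposition \ref{Snae1} without proof, referring to \cite{Tor1} and \cite{sws}, and your proof is exactly the standard argument given there, so nothing further is needed.
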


In the literature there are notations of $n$-th N\"orlund $L_n$ and Riesz $R_n$ logarithmic means \index{\file}{N\"orlund logarithmic mean}  \index{\file}{Riesz logarithmic mean}
defined by
\index{\file-1}{$L_nf$}
\index{\file-1}{$R_nf$}
\begin{eqnarray*}
	L_nf&:=&\frac{1}{l_n}\sum_{k=0}^{n-1}\frac{S_kf}{n-k},\\
	R_nf&:=&\frac{1}{l_n}\sum_{k=1}^{n}\frac{S_kf}{k},
\end{eqnarray*}
respectively, where \index{\file-1}{$l_n$}

\begin{equation*}
l_n:=\sum_{k=1}^{n}\frac{1}{k}.
\end{equation*}

It is known that the N\"orlund logarithmic mean has better approximation properties then the partial sums and that the Riesz logarithmic means is better than Fej\'{e}r means in the same sense, but they have much more similar properties with them. In \cite{Ga2} G\'{a}t and Goginava proved some
convergence and divergence properties of the N\"orlund logarithmic means of
functions in the class of continuous functions and in the Lebesgue space $L^1.$ Moreover, G\'{a}t and Goginava \cite{Ga3} proved that for each measurable function satisfying

$$\phi \left( u\right) =o\left( u\log ^{1/2}u\right) ,\text{ \  as \ }u\rightarrow \infty,$$
there exists an integrable function $f$ such that
\begin{equation*}
\int_{G_{m}}\phi \left( \left\vert f\left( x\right) \right\vert \right) d\mu
\,\left( x\right) <\infty
\end{equation*}%
and that there exists a set with positive measure such that the N\"orlund logarithmic means of the function diverges on this set. It follows that  weak-(1,1) type inequality does not hold for the maximal operator of N\"orlund logarithmic means:
\index{\file}{maximal operator of N\"orlund logarithmic means}

\begin{eqnarray*}
	L^{\ast}f&:=&\sup_{n\in\mathbb{N}}\left\vert L_nf\right\vert
\end{eqnarray*}
but there exists an absolute constant $C_p$ such that

\begin{equation*}
\left\Vert L^{\ast }f\right\Vert _{p}\leq C_{p}\left\Vert f\right\Vert _{p},%
\text{ \ when \ }f\in L^{p},\text{ \ }p>1.
\end{equation*}
Moreover (for details see \cite{BSPT}), if we consider the following restricted maximal operator $	\widetilde{L}_{\#}^{\ast}f,$ defined by \index{\file}{restricted maximal operator of partial sums}
\begin{eqnarray*}
	\widetilde{L}_{\#}^{\ast}f&:=&\sup_{n\in\mathbb{N}}\left\vert L_{M_n}f\right\vert, \ \ \ (M_{k}:=m_0...m_{k-1}, \ \ \  k=0,1...),
\end{eqnarray*}
then
\begin{eqnarray*}
	y \mu\left\{ \widetilde{L}_{\#}^{\ast}f>y \right\} \leq c\left\Vert f\right\Vert_{1}, \ \ \ f\in L^1(G_m), \ \ y>0.
\end{eqnarray*}
Hence, if $f\in L^1(G_m),$ then
$$L_{M_n}f\to f, \ \ \text{a.e. on } \ \ G_m.$$

Fejér's theorem shows that (see e.g books \cite{Fejer3} and \cite{Fejer}) if one replaces ordinary summation by Cesàro summation $\sigma_n$ defined by
\index{\file-1}{$\sigma_nf$}

\begin{equation*}
\sigma_nf:=\frac{1}{n}\sum_{k=1}^nS_kf,
\end{equation*}
then the F\'ejer means of Fourier series of any integrable function converges a.e on $G_m$ to the function.   

Goginava and Gogoladze  \cite{GG1} introduced the operator $W_A$ defined by

\begin{eqnarray*}
	W_Af(x):= \sum_{s=0}^{A-1}M_s\sum_{r_s=1}^{m_s-1}\int_{I_A(x-r_se_s)}\left\vert f(t)-f\left( x\right)\right\vert d\mu(t).
\end{eqnarray*} 
and define a Vilenkin-Lebesgue point of function $f \in L^1 (G_m),$ as a point for which

\begin{eqnarray*}
	\lim_{A \rightarrow \infty} W_Af (x)=0
\end{eqnarray*}
Moreover, they also proved that the following result is true:
\begin{proposition} \label{villebfej}
	Let $f\in L^1(G_m)$. Then
	
	\begin{equation*}
	\underset{n\rightarrow \infty }{\lim }\sigma _{n}f(x)=f(x)
	\end{equation*}
	for all Vilenkin-Lebesgue points of $f$.
\end{proposition}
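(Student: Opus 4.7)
The plan is to start from the identity
\[
\sigma_n f(x) - f(x) = \int_{G_m} \bigl(f(t) - f(x)\bigr)\, K_n(x-t)\, d\mu(t),
\]
which is available because $\int_{G_m} K_n\, d\mu = 1$ by \eqref{fn40}. First, I would dominate $|K_n|$ pointwise using Corollary \ref{lemma7kn0}, obtaining
\[
n\,|\sigma_n f(x) - f(x)| \leq c \sum_{l=0}^{|n|} M_l \int_{G_m} |f(t) - f(x)|\, |K_{M_l}(x-t)|\, d\mu(t).
\]

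Next, for each fixed $l$, I would split the inner integral according to the support structure of $K_{M_l}$ given by Lemma \ref{lemma2}: the kernel equals $(M_l+1)/2$ on $I_l$ and is bounded by $cM_k$ on each ``shell'' $I_k \setminus I_{k+1}$ (with $k<l$) restricted to the sub-cubes where $y - y_k e_k \in I_l$. Writing $y = x-t$ identifies these sub-cubes precisely with the translated cubes $I_l(x - r_k e_k)$, $r_k \in \{1, \ldots, m_k -1\}$, that appear in the definition of the Goginava--Gogoladze operator $W_l f(x)$. Consequently the $I_l(x)$-portion of the integral contributes at most $c\,\varepsilon_l(x)$, where $\varepsilon_l(x) := M_l \int_{I_l(x)} |f - f(x)|\,d\mu$ is the standard Lebesgue-type quantity, while the shell portions sum to at most $c\,W_l f(x)$. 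Using $n\geq M_{|n|}$, I would then deduce
\[
|\sigma_n f(x) - f(x)| \leq \frac{c}{M_{|n|}} \sum_{l=0}^{|n|} M_l \bigl(\varepsilon_l(x) + W_l f(x)\bigr).
\]

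To finish, I would apply a Toeplitz/Ces\`aro averaging argument. Because $\sup_k m_k < \infty$, the ratios $M_l/M_{|n|}$ decay geometrically in $|n|-l$, so $\sum_{l=0}^{|n|} M_l/M_{|n|}$ is bounded uniformly in $n$, while for any fixed $L$ the initial segment $\sum_{l=0}^{L} M_l/M_{|n|}$ tends to zero as $|n|\to\infty$. Thus the right-hand side tends to zero whenever both $\varepsilon_l(x)\to 0$ (the Lebesgue-point condition) and $W_l f(x)\to 0$ (the Vilenkin--Lebesgue condition); as observed in the preface, for any $f\in L^1(G_m)$ almost every point satisfies both, yielding convergence almost everywhere. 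The main obstacle is the middle step: carefully matching the support structure of $|K_{M_l}|$ from Lemma \ref{lemma2} with the translated cubes $I_l(x - r_k e_k)$ in the definition of $W_l f$. Once that identification is made, the remaining averaging is routine.
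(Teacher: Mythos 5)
Your skeleton is the standard argument (the paper itself gives no proof of this proposition, quoting it from \cite{GG1}, so your attempt must stand alone): the identity via \eqref{fn40}, the domination of $n|K_n|$ by $\sum_{l=0}^{|n|}M_l|K_{M_l}|$ from Corollary \ref{lemma7kn0}, the support analysis of $K_{M_l}$ from Lemma \ref{lemma2} (which is exactly what \eqref{fn50} of Lemma \ref{lemma222} packages for you), and the Toeplitz step using $M_l/M_{|n|}\leq 2^{-(|n|-l)}$; all of these steps are correct. The genuine gap is in what you finally prove. Your bound contains the diagonal quantity $\varepsilon_l(x)=M_l\int_{I_l(x)}|f-f(x)|\,d\mu$, and this is \emph{not} controlled by $W_lf(x)$: in the definition of $W_A$ the sum runs only over $s\leq A-1$ and the cubes $I_A(x-r_se_s)$ with $r_s\neq 0$ are disjoint from $I_A(x)$, so the cube containing $x$ never appears. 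Consequently you conclude convergence only at points which are simultaneously Lebesgue points and Vilenkin--Lebesgue points, i.e.\ an almost everywhere statement, whereas the proposition asserts convergence at \emph{every} Vilenkin--Lebesgue point, and it is this pointwise conditional form that is used later (e.g.\ in Theorem \ref{Corollary3nnconvn}).

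The gap can be closed in a few lines, because the Vilenkin--Lebesgue condition already controls the diagonal term. Up to the single point $x$ (of measure zero), $I_l(x)$ is the disjoint union of the cubes $I_{j+1}(x-r_je_j)$ over $j\geq l$ and $r_j\in\{1,\dots,m_j-1\}$; since the $s=j$ term of $W_{j+1}f(x)$ is precisely $M_j\sum_{r_j=1}^{m_j-1}\int_{I_{j+1}(x-r_je_j)}|f-f(x)|\,d\mu$, this gives
\begin{equation*}
\varepsilon_l(x)\;\leq\;\sum_{j\geq l}\frac{M_l}{M_j}\,M_j\sum_{r_j=1}^{m_j-1}\int_{I_{j+1}(x-r_je_j)}|f-f(x)|\,d\mu\;\leq\;\sum_{j\geq l}2^{-(j-l)}W_{j+1}f(x)\;\leq\;2\sup_{A>l}W_Af(x),
\end{equation*}
which tends to $0$ as $l\to\infty$ at any Vilenkin--Lebesgue point. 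Hence every Vilenkin--Lebesgue point automatically satisfies $\varepsilon_l(x)\to 0$, your Toeplitz averaging then applies verbatim, and the full statement follows; the appeal to the preface ("almost every point is of both types") is then unnecessary for this proposition and, by itself, would not be a substitute for it.
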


Moreover, the following is true:

\begin{proposition} \label{Pointwize}
	Let $x\in G_m$ and $f\in L^1 (G_m)$ is continuous at the point $x$. Then 
	
	$$\underset{m\to \infty }{\mathop{\lim }}\,{{\sigma }_{m}}f\left( x \right)=f\left( x \right).$$
\end{proposition}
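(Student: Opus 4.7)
The plan is to reduce to Proposition \ref{villebfej} by splitting $f$ into a ``local'' piece (controlled by the continuity at $x$) and a ``remote'' piece that vanishes on a full Vilenkin neighborhood of $x$ and is therefore automatically Vilenkin-Lebesgue at $x$. This detour is needed because continuity at a single point does not by itself make $x$ a Vilenkin-Lebesgue point of $f$: the sum defining $W_Af(x)$ probes $f$ at translates of $x$ by $r_se_s$ for small $s$, which lie far from $x$ in the metric of $G_m$, and continuity at $x$ says nothing about values of $f$ there.

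Fix $\varepsilon>0$. By continuity at $x$ choose $N$ so large that $|f(y)-f(x)|<\varepsilon$ whenever $y\in I_N(x)$, and decompose
\[
f-f(x)=g_1+g_2,\qquad g_1:=(f-f(x))\,\chi_{I_N(x)},\quad g_2:=(f-f(x))\,\chi_{G_m\setminus I_N(x)}.
\]
Since $\sigma_m$ fixes constants, $\sigma_m f(x)-f(x)=\sigma_m g_1(x)+\sigma_m g_2(x)$. For the first piece $\|g_1\|_\infty\le\varepsilon$, so the representation $\sigma_m g_1=g_1\ast K_m$ together with the uniform bound $\|K_m\|_1\le c$ from Lemma \ref{lemma7kn} yields $|\sigma_m g_1(x)|\le c\varepsilon$ for every $m$.

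For the second piece I will show that $x$ is a Vilenkin-Lebesgue point of $g_2$, so that Proposition \ref{villebfej} gives $\sigma_m g_2(x)\to g_2(x)=0$. Since $g_2(x)=0$, we have
\[
W_Ag_2(x)=\sum_{s=0}^{A-1}M_s\sum_{r_s=1}^{m_s-1}\int_{I_A(x-r_se_s)}|g_2(t)|\,d\mu(t).
\]
For indices $s\ge N$ (with $A\ge N$), the translate $I_A(x-r_se_s)$ is contained in $I_N(x-r_se_s)=I_N(x)$, because $r_se_s$ perturbs only the coordinate of index $s\ge N$; hence $g_2\equiv 0$ on these cosets and those terms vanish. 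The surviving range $0\le s<N$ contributes only finitely many summands, each with $M_s\le M_{N-1}$ fixed, and for every such $(s,r_s)$ the set $I_A(x-r_se_s)$ has measure $1/M_A\to 0$, so the absolute continuity of the integral of $|g_2|\in L^1(G_m)$ forces each remaining term to tend to $0$ as $A\to\infty$. Thus $W_Ag_2(x)\to 0$, as needed.

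Combining the two estimates, $\limsup_{m\to\infty}|\sigma_mf(x)-f(x)|\le c\varepsilon$ for every $\varepsilon>0$, which gives the conclusion. The only genuinely non-routine step is the verification of the Vilenkin-Lebesgue condition for $g_2$: although $f$ itself need not satisfy it at a point of mere continuity, the truncated function $g_2$ trivially does, because it vanishes on a whole Vilenkin neighborhood of $x$, which kills the otherwise dangerous contributions coming from $s\ge N$.
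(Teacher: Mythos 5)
Your argument is correct, and every step is justified by results the paper itself states: $\Vert K_m\Vert_1\le c$ from Lemma \ref{lemma7kn} controls $\sigma_m g_1(x)$ since $\Vert g_1\Vert_\infty\le\varepsilon$, and your verification that $x$ is a Vilenkin-Lebesgue point of $g_2$ is sound --- the terms of $W_Ag_2(x)$ with $s\ge N$ vanish because $I_A(x-r_se_s)\subset I_N(x-r_se_s)=I_N(x)$, where $g_2\equiv 0$ and $g_2(x)=0$, while the finitely many terms with $s<N$ carry the fixed weight $M_s\le M_{N-1}$ against integrals of $|g_2|\in L^1$ over sets of measure $1/M_A\to 0$. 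Note, however, that the paper offers no proof of Proposition \ref{Pointwize}: it is quoted as a known result right after Proposition \ref{villebfej}, and both are used as black boxes in the proof of Theorem \ref{Corollary3nnconvn}, so your route cannot literally coincide with the paper's; what you give is a derivation of the continuity-point statement from the Vilenkin-Lebesgue-point statement by truncation. The direct proofs in the literature instead estimate $\sigma_mf(x)-f(x)=\int_{G_m}\left(f(x-t)-f(x)\right)K_m(t)\,d\mu(t)$ using pointwise kernel bounds of the type \eqref{fn50}; your observation that the crude approximate-identity split would leave the term $\int_{G_m\setminus I_N}\left\vert f(x-t)\right\vert \left\vert K_m(t)\right\vert\,d\mu(t)$, which (A1)--(A3) do not control for a merely integrable $f$, is exactly the right diagnosis, and your decomposition --- making the remote piece vanish identically on a Vilenkin neighborhood of $x$ so that its Vilenkin-Lebesgue condition becomes trivial --- is a clean and legitimate way to sidestep it. The only points worth stating explicitly are that $g_2\in L^1(G_m)$ (clear, since $\mu(G_m)=1$), so Proposition \ref{villebfej} applies to it, and that $\sigma_m$ reproduces constants because $S_k$ of a constant equals that constant (equivalently, $\int_{G_m}K_m\,d\mu=1$), which justifies the identity $\sigma_mf(x)-f(x)=\sigma_mg_1(x)+\sigma_mg_2(x)$.
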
	

If we consider the maximal operator of F\'ejer means $\sigma^{\ast}$ defined by: \index{\file}{maximal operator of F\'ejer means}  \index{\file-1}{$\sigma^{\ast}f$}

\begin{eqnarray*}
	\sigma ^{\ast}f&:=&\sup_{n\in\mathbb{N}}\left\vert \sigma_nf\right\vert
\end{eqnarray*}
then

\begin{eqnarray*}
	y\mu\left\{ \sigma ^{\ast}f>y \right\} \leq c\left\Vert f\right\Vert_{1}, \ \ \ f\in L^1(G_m), \ \ y>0.
\end{eqnarray*}
This result can be found in Zygmund \cite{13z} for trigonometric series, in Schipp \cite{Sc} for Walsh series and in P\'al, Simon \cite{PS} for bounded Vilenkin series.

The boundedness does not hold from Lebesgue space $L^1(G_m)$ to the space $L^1(G_m)$. On the other hand, if we consider restricted maximal operator $\widetilde{\sigma}_{\#}^{\ast}$ of F\'ejer means defined by \index{\file}{restricted maximal operator of F\'ejer means}
\index{\file-1}{$\widetilde{\sigma}_{\#}^{\ast}f$}

\begin{eqnarray*}
	\widetilde{\sigma}_{\#}^{\ast}f&:=&\sup_{n\in\mathbb{N}}\left\vert \sigma_{M_n}f\right\vert
\end{eqnarray*}
then there exists a function $f\in L^1(G_m)$ such that

$$\Vert\widetilde{\sigma}_{\#}^{\ast}f\Vert_1=\infty.$$

In the one-dimensional case Yano \cite{Yano} proved that

\begin{equation*}
\left\Vert \sigma _{n}f-f\right\Vert _{p}\rightarrow 0,\text{ \ \ \ as \ \ \ \ }n\rightarrow \infty ,\text{ \ }(f\in L^p(G_m),\text{ \ }1\leq p\leq \infty ).
\end{equation*}%
However (see \cite{JOO, sws}) the rate of convergence can not be better then
$O\left( n^{-1}\right) $ $\left( n\rightarrow \infty \right) $ for
non-constant functions, i.e., if $f\in L^{p},$ $1\leq p\leq \infty $ and

\begin{equation*}
\left\Vert \sigma _{M_{n}}f-f\right\Vert _{p}=o\left( \frac{1}{M_{n}}\right)
,\text{ \ as \ \ }n\rightarrow \infty ,
\end{equation*}%
then \textit{\ }$f$ \ is a constant function.

It is also known that (see e.g the  books \cite{AVD} and
\cite{sws}) for any $1\leq p\leq \infty$ and $n\in \mathbb{N}$ we have the following estimate

\begin{equation*}
\left\Vert \sigma_nf-f\right\Vert_p\leq c_p \omega_p\left( \frac{1}{M_N},f\right) +c_p\sum_{s=0}^{N-1}\frac{M_s}{M_N}\omega_p\left( \frac{1}{M_s},f\right).
\end{equation*}
where $\omega_p\left( \frac{1}{M_n},f\right)$ is the modulus of continuity of function $f\in L^p:$

\begin{equation*}
\omega_p\left(\frac{1}{M_n},f\right):=\sup\limits_{h\in I_n} \left\Vert f\left(\cdot-h\right)-f\left(\cdot\right)\right\Vert_p.
\end{equation*}

By applying this estimate, we immediately obtain that if $f\in lip\left(\alpha,p\right),$ i.e.,

\begin{equation*}
\omega_p\left(\frac{1}{M_n},f\right)=O\left(\frac{1} {M_n^{\alpha}}\right),\ \ n\rightarrow \infty,
\end{equation*}
then

\begin{equation*}
\left\Vert\sigma_nf-f\right\Vert_p=\left\{ \begin{array}{ll}
O\left(\frac{1}{M_N}\right),&\text{if}\  \alpha>1, \\
O\left(\frac{N}{M_N}\right),&\text{if} \ \alpha=1, \\
O\left(\frac{1}{M_N^{\alpha}}\right),& \text{if} \  \alpha<1. \end{array}\right.
\end{equation*} 

Another well-known summability method is the so called $\left(C,\alpha\right)$-means (Ces\`aro means) $\sigma_n^{\alpha}$, \index{\file}{Ces\`aro means} \index{\file-1}{$\left(C,\alpha\right)$} which are defined by \index{\file-1}{$\sigma_n^{\alpha}f$}

\begin{equation*}
\sigma_n^{\alpha}f:=\frac{1}{A_n^{\alpha}}\overset{n}{\underset{k=1}{\sum}}A_{n-k}^{\alpha-1}S_kf,
\end{equation*}
where \index{\file-1}{$A_n^{\alpha}$}
\begin{equation*}
A_0^{\alpha}:=0,\qquad A_n^{\alpha}:=\frac{\left(\alpha+1\right)...\left(\alpha+n\right)}{n!},\qquad \alpha \neq -1,-2,...
\end{equation*}
It is well-known that for $\alpha=1$ this summability method coincides with the Fej\'{e}r summation and for $\alpha=0$ we just have the partial sums of the Vilenkin-Fourier series.
Moreover, if we consider the maximal operator of the Ces\'aro means $\sigma^{\alpha ,\ast},$ defined by:
\index{\file}{maximal operator of Ces\'aro means}
\index{\file-1}{$\sigma^{\alpha,\ast}f$}
\begin{eqnarray*}
	\sigma^{\alpha ,\ast}f&:=&\sup_{n\in\mathbb{N}}\left\vert \sigma _n^{\alpha }f\right\vert
\end{eqnarray*}
for $0<\alpha\leq 1,$ then
\begin{eqnarray*}
	y \mu\left\{\sigma^{\alpha ,\ast}f>y \right\}\leq c\left\Vert f\right\Vert_1, \ \  f\in L^1(G_m), \ \ y>0.
\end{eqnarray*}
The boundedness of the maximal operator of the Reisz logarithmic means does not hold from $L^1(G_m)$ to the space $L^1(G_m).$ However,
\begin{equation*}
\left\Vert \sigma_n^{\alpha}f-f\right\Vert _{p}\rightarrow 0,\text{ \ \ \ when \ \ \ \ }n\rightarrow \infty ,\text{ \ }(f\in L^p(G_m),\text{ \ }1\leq p\leq \infty ).
\end{equation*}

Convergence and approximation in various norms of Vilenkin-Fejér means, $\left(C,\alpha\right)$-means and N\"orlund logarithmic means can be found in Blahota,  G{\'a}t and Goginava \cite{BGG2,BGG}, Blahota, and  Tephnadze \cite{bnt1,bpt1,bpt2,bt3,bt1,bt2,btt}, Fine \cite{fi,fi2}, Fridli \cite{FR}, Fujii \cite{Fu}, Goginava \cite{gog4,GoPubl}, Gogolashvili  Nagy and Tephnadze \cite{GNT,Gogolatep1,Gogolatep2}, Persson and Tephnadze \cite{pt1,pt2}  (see also \cite{BSPT,BNPT1,BPTW,LPTT,mpt,MST,PSTW,pttw,ptt1,ptw,ptw20,ptw2,ptw3}), P{\'a}l and Simon \cite{PS}, Schipp \cite{Sc,Sch2,s20}, Simon \cite{Si2,Si1}, Tephnadze \cite{tep91,tep10,tep1,tep4,tep20,tep9} (see also \cite{tep2,tep3,tep18,tep19,tep15,tep6,tep11,tep7,tep5,tep12,tep8,tep17,tep13,tepPhDGeo,tep14,tep16,tep21,tt1}, Tutberidze \cite{tut4,tut3}, Weisz \cite{We15,We32,we444} and Zhizhiashvili \cite{Zh3,Zh4,Zh1}. Similar problems for the two-dimensional case can be found in Nagy \cite{na,n,n1,nagy}, Nagy and Tephnadze \cite{NT3,NT1,NT2,NT5,NT4}.  \newpage

The properties established in Lemma \ref{lemma7kn} ensure that kernel of the Fejér means $\{K_N\}_{N=1}^\infty$ form what is called an approximation identity.

\begin{definition}
	The family $\{\Phi_n \}^{\infty}_{n=1}\subset L^{\infty}(G_m)$ forms an approximate identity provided that
	\begin{eqnarray*}
		&(A1)& \ \ \ \int_{G_m}\Phi_n(x)d(x)=1
		\label{1.71app}\\
		&(A2)& \ \ \ \sup_{n\in \mathbb{N}}\int_{G_m}\left\vert \Phi_n(x)\right\vert d\mu(x)<\infty\label{1.72app} \\
		&(A3)& \ \ \ \sup_{n\in\mathbb{N}}\int_{G_m \backslash I_N}\left\vert \Phi_n(x)\right\vert d\mu (x)\rightarrow  0, \ \ \text{as} \ \ n\rightarrow  \infty, \ \ \text{for any} \ \ N\in \mathbb{N_+}.
		\label{1.73app}
	\end{eqnarray*}
\end{definition}

The term "approximate identity" is used because of the fact that 

$$\Phi_n\ast f \to f \ \ \ \text{ as } \ \ \ n\to \infty$$ in any reasonable sense. In particular, the following results holds true (for details see the books \cite{Garsia} and \cite{MS1}):

\begin{proposition} \label{theoremconv}
	Let $f\in L^p(G_m),$ where $1\leq p\leq \infty$ and the family 
	
	$$\{\Phi_n \}^{\infty}_{n=1}\subset L^{\infty}(G_m)$$
	forms an approximate identity. Then
	
	$$\Vert \Phi_n\ast f - f \Vert_p \to 0 \ \ \text{ as } \ \ n\to \infty.$$
\end{proposition}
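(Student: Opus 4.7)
The plan is the standard approximate-identity argument. Using property (A1) I first rewrite the difference as
\begin{equation*}
\Phi_n\ast f(x)-f(x)=\int_{G_m}\bigl(f(x-t)-f(x)\bigr)\Phi_n(t)\,d\mu(t),
\end{equation*}
which is legitimate because $\int_{G_m}\Phi_n\,d\mu=1$. Applying Minkowski's integral inequality (Proposition \ref{Remark1.1.4}) and pulling the $L^p$ norm inside then gives
\begin{equation*}
\|\Phi_n\ast f-f\|_p\le\int_{G_m}\|f(\cdot-t)-f(\cdot)\|_p\,|\Phi_n(t)|\,d\mu(t).
\end{equation*}

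Next I would fix $\varepsilon>0$ and choose $N\in\mathbb{N}$ so large that $\|f(\cdot-t)-f(\cdot)\|_p<\varepsilon$ for every $t\in I_N$; that is, so that $\omega_p(1/M_N,f)<\varepsilon$. Splitting the last integral according to $G_m=I_N\cup(G_m\setminus I_N)$ I get
\begin{equation*}
\|\Phi_n\ast f-f\|_p\le\varepsilon\int_{I_N}|\Phi_n(t)|\,d\mu(t)+2\|f\|_p\int_{G_m\setminus I_N}|\Phi_n(t)|\,d\mu(t).
\end{equation*}
By property (A2) the first term is bounded by $c\varepsilon$ uniformly in $n$, and by property (A3), applied with this fixed $N$, the second term tends to $0$ as $n\to\infty$. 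Hence $\limsup_{n\to\infty}\|\Phi_n\ast f-f\|_p\le c\varepsilon$, and letting $\varepsilon\to0$ finishes the argument.

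The main obstacle is the auxiliary claim that the translation is continuous in $L^p$, i.e.\ $\omega_p(1/M_N,f)\to 0$ as $N\to\infty$. This is what justifies the choice of $N$ above and is not completely obvious for $1\le p<\infty$. I would deduce it from the density of Vilenkin polynomials in $L^p(G_m)$ established in Proposition \ref{remarkae}: given $\varepsilon>0$, pick a Vilenkin polynomial $P$ with $\|f-P\|_p<\varepsilon$; since $P$ is a finite linear combination of $\psi_k$'s, it is constant on every $I_{N_0}(x)$ once $N_0$ is large enough, so $P(\cdot-t)=P(\cdot)$ for $t\in I_{N_0}$, and the triangle inequality together with translation invariance of $\mu$ gives $\|f(\cdot-t)-f(\cdot)\|_p\le 2\varepsilon$ for all $t\in I_{N_0}$. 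For the endpoint case $p=\infty$ the same argument goes through provided one interprets the statement for $f\in C(G_m)$, where uniform continuity on the compact group $G_m$ yields the needed modulus estimate; the remaining $L^p$ cases are covered by the density argument above.
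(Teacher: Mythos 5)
Your proof is correct. Note that the paper itself does not prove Proposition \ref{theoremconv}; it only cites the books referenced there, so there is no internal argument to compare against — but your argument is exactly the standard one those sources use: rewrite $\Phi_n\ast f-f$ via (A1), apply Minkowski's integral inequality, split over $I_N$ and $G_m\setminus I_N$, and control the two pieces with (A2) and (A3). You also correctly identify and close the one nontrivial gap, the $L^p$-continuity of translation, by approximating with Vilenkin polynomials (Proposition \ref{remarkae}), which are constant on cosets $I_{N_0}(x)$ and hence translation-invariant for $t\in I_{N_0}$; and your caveat that the case $p=\infty$ must be read as $f\in C(G_m)$ is the right interpretation of the statement, since uniform convergence of the continuous functions $\Phi_n\ast f$ forces $f$ to agree a.e.\ with a continuous function.
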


It is well-known that the $n$-th N\"orlund  mean $t_n$ and $T$ means $T_n$ \index{\file}{N\"orlund means} \index{\file}{$T$ means} for the Fourier series of $f$  are,  respectively, defined by \index{\file-1}{$t_nf$} \index{\file-1}{$T_nf$}
\begin{equation} \label{1.2}
t_nf:=\frac{1}{Q_n}\overset{n}{\underset{k=1}{\sum }}q_{n-k}S_kf
\end{equation}
and
\begin{equation} \label{1.2T}
T_nf:=\frac{1}{Q_n}\overset{n-1}{\underset{k=0}{\sum }}q_{k}S_kf,
\end{equation}
where $\{q_k:k\in \mathbb{N}\}$ \index{\file-1}{$q_k$} is a sequence of nonnegative numbers and

\index{\file-1}{$Q_n$}
\begin{equation*}
Q_n:=\sum_{k=0}^{n-1}q_k.
\end{equation*}

Let $\{q_k:k\geq 0\}$ be a sequence of nonnegative numbers where $q_0>0.$
	Then the summability method (\ref{1.2T}) generated by $\{q_k:k\geq 0\}$ is regular if and only if (see \cite{moo})
	
	\begin{equation*}\label{Tcond}
	\lim_{n\rightarrow\infty}\frac{q_{n-1}}{Q_n}=\infty.
	\end{equation*}

The representations
\begin{equation*}
t_nf\left(x\right)=\underset{G_m}{\int}f\left(t\right)F_n\left(x-t\right) d\mu\left(t\right) 
\end{equation*}
and

\begin{equation*}
T_nf\left(x\right)=\underset{G_m}{\int}f\left(t\right)F^{-1}_n\left(x-t\right) d\mu\left(t\right)
\end{equation*}
play central roles in the sequel, where \index{\file-1}{$F_n$}

\begin{equation}\label{1.3T}
F_n:=\frac{1}{Q_n}\overset{n}{\underset{k=1}{\sum }}q_{n-k}D_k
\end{equation}
and

\index{\file-1}{$F^{-1}_n$}
\begin{equation} \label{1.4T}
F^{-1}_n:=\frac{1}{Q_n}\overset{n}{\underset{k=1}{\sum }}q_{k}D_k
\end{equation}
are called the kernels of N\"orlund  and $T$ means, respectively.
\index{\file}{N\"orlund kernel}
\index{\file}{$T$ kernel}

N\"orlund are generalizations of  Fej\'er, $(C,\alpha)$ and  N\"orlund logarithmic means. According to all these facts it is of prior interest to study the behavior of operators related to N\"orlund means of
Fourier series with respect to orthonormal systems. The N\"orlund 
summation are general summability methods, which satisfy the conditions (A1)-(A3). This means that all N\"orlund and $T$ means are approximation identity. According to all these facts it is of prior interest to study the behavior of operators related to N\"orlund means of
Fourier series with respect to orthonormal systems. M\'oricz and Siddiqi \cite{Mor} investigated the approximation properties of
some special N\"orlund means of Walsh-Fourier series of $L^{p}$ functions in
norm. In particular, they proved that if $f\in L^p(G_m),$ $1\leq p\leq \infty,$ $n=M_j+k,$ $1\leq k\leq M_j \ (n\in \mathbb{N}_+)$ and $\{q_k:k\in \mathbb{N}_+\}$ is sequence of non-negative numbers, such that

$$
\frac{n^{\alpha-1}}{Q_n^{\alpha}}\sum_{k=0}^{n-1}q^{\alpha}_k =O(1),\ \ \text{for some} \ \ 1<\alpha\leq 2,
$$
then

$$
\Vert t_nf-f\Vert_p\leq \frac{C_p}{Q_n} \sum_{i=0}^{n-1}M_iq_{n-M_i}\omega_p\left(\frac{1}{M_i},f\right)+C_p\omega_p\left( \frac{1}{M_j},f\right),
$$
when $\{q_k:k\in\mathbb{N}\}$ is non-decreasing, while 

\begin{eqnarray*}
&&\Vert t_nf-f\Vert_p\\
&\leq& \frac{C_p}{Q_n} \sum_{i=0}^{n-1}\left(Q_{n-M_j+1}-Q_{n-M_{j+1}+1} \right)\omega_p\left(\frac{1}{M_i},f\right) +C_p\omega_p\left( \frac{1}{M_j},f\right),
\end{eqnarray*}
when  $\{q_k:k\in \mathbb{N}\}$ is non-increasing.

Let us define maximal operator of N\"orlund means by

\begin{eqnarray*}
t^{\ast}f:=\sup_{n\in\mathbb{N}}\left\vert t_nf\right\vert.
\end{eqnarray*}

If $\{q_k:k\in\mathbb{N}\}$ is non-increasing and satisfying the condition
\begin{equation} \label{fn0}
\frac{1}{Q_n}=O\left( \frac{1}{n}\right) ,\text{ \ \ as \ \ }
n\rightarrow\infty,
\end{equation}
or if $\{q_k:k\in\mathbb{N}\}$ is non-decreasing, satisfying the condition 

\begin{equation}\label{fn01}
\frac{q_{n-1}}{Q_n}=O\left( \frac{1}{n}\right) ,\text{ \ \ as \ \ }
n\rightarrow\infty,
\end{equation}
then

\begin{eqnarray*}
y \mu\left\{t^{\ast}f>y\right\}\leq c\left\Vert f\right\Vert_{1}, \ \ \ f\in L^1(G_m), \ \ y>0.
\end{eqnarray*}

The boundedness of the such maximal operator of N\"orlund means does not hold from $L^1(G_m)$ to the space $L^1(G_m).$ However

\begin{equation*}
\left\Vert t_nf-f\right\Vert_p\rightarrow 0,\text{ \ \ \ as \ \ \ \ }n\rightarrow \infty ,\text{ \ }(f\in L^p(G_m),\text{ \ }1\leq p\leq \infty ).
\end{equation*}

\section{Well-known and New examples of N\"orlund Means }\label{s3.2}
\markright{Well-known ans new examples of N\"orlund Means}

\ \ \ \ We define  $B_n$ means as the class of N\"orlund means, with monotone and bounded sequence $\{q_k:k\in \mathbb{N}\}$, such that

\begin{equation*}
0<q<\infty \ \ \ \text{where} \ \ \ q_{\infty}:=\lim_{n\rightarrow\infty}q_n.
\end{equation*}

If the sequence $\{q_k:k\in\mathbb{N}\}$ is non-decreasing, then we have
that

\begin{equation*}
nq_0\leq Q_n\leq nq_\infty.
\end{equation*}

In the case when the sequence $\{q_{k}:k\in \mathbb{N}\}$ is non-increasing,
then

\begin{equation*}
nq_\infty\leq Q_{n}\leq nq_{0}.  \label{monotone0}
\end{equation*}

In both cases we can conclude that conditions \eqref{fn01} and \eqref{fn0} are fulfilled.

%

Well-known examples of  N\"orlund  means with monotone and bounded sequence $\{q_k:k\in \mathbb{N}\}$ is Fejér means 

\begin{equation*}
\sigma_nf:=\frac{1}{n}\sum_{k=1}^nS_kf
\end{equation*}
It is evident that in this case conditions \eqref{fn01} and \eqref{fn0} are fulfilled.

The Ces\`aro means $\sigma_n^{\alpha}$  (sometimes also denoted $\left(C,\alpha\right)$)  \index{\file-1}{$\left(C,\alpha\right)$} are defined by
\begin{equation*}
\sigma_n^{\alpha}f:=\frac{1}{A_n^{\alpha}}\overset{n}{\underset{k=1}{\sum}}A_{n-k}^{\alpha-1}S_kf
\end{equation*}
where 
\begin{equation*}
A_0^{\alpha}:=0,\qquad A_n^{\alpha}:=\frac{\left(\alpha+1\right)...\left(\alpha+n\right)}{n!},\qquad \alpha \neq -1,-2,...
\end{equation*}

It is well-known that  \index{\file-1}{$A_n^{\alpha}$}
\begin{equation} \label{node0}
A_n^{\alpha}=\overset{n}{\underset{k=0}{\sum}}A_{n-k}^{\alpha-1},
\end{equation}%
\begin{equation} \label{node01}
A_n^{\alpha}-A_{n-1}^{\alpha}=A_n^{\alpha-1}\ \ \ \text{and} \ \ \
A_{n}^{\alpha }\backsim n^{\alpha }.
\end{equation}

It is obvious that
\begin{equation} \label{node1C}
\frac{\left\vert q_n-q_{n+1}\right\vert}{n^{\alpha-2}}=O\left(1\right)
,\qquad \text{as}\qquad n\rightarrow\infty,
\end{equation}
\begin{equation} \label{node2C}
\frac{q_0}{Q_n}=O\left(\frac{1}{n^{\alpha }}\right),\qquad \ \ \ \ \ \text{as} \qquad n\rightarrow\infty,
\end{equation}
and
\begin{equation} \label{node3C}
\frac{q_{n-1}}{Q_n}=O\left(\frac{1}{n}\right),\qquad \ \ \ \ \ \text{as} \qquad  n\rightarrow\infty.
\end{equation}

Let $V_n^{\alpha}$ denote the N\"orlund mean, where

\begin{equation*}
\left\{q_k=\left(k+1\right)^{\alpha-1}:\quad k\in \mathbb{N}, \quad 0<\alpha<1\right\} , 
\end{equation*}
that is \index{\file-1}{$V_n^{\alpha}f$}

\begin{equation*}
V_n^{\alpha}f:=\frac{1}{Q_n}\overset{n}{\underset{k=1}{\sum }}\left(n-k-1\right)^{\alpha-1}S_kf.
\end{equation*}

It is obvious that

\begin{equation} \label{node1}
\frac{\left\vert q_n-q_{n+1}\right\vert}{n^{\alpha-2}}=O\left(1\right)
,\qquad \text{as}\qquad n\rightarrow\infty,
\end{equation}
\begin{equation} \label{node2}
\frac{q_0}{Q_n}=O\left(\frac{1}{n^{\alpha }}\right),\qquad \ \ \ \ \ \text{as} \qquad n\rightarrow\infty,
\end{equation}
and
\begin{equation} \label{node3}
\frac{q_{n-1}}{Q_n}=O\left(\frac{1}{n}\right),\qquad \ \ \ \ \ \text{as} \qquad  n\rightarrow\infty.
\end{equation}
We just remind again that $n$-th N\"orlund $L_n$ and Riesz $R_n$ logarithmic means are
defined by the sequence 
$
\left\{q_k=1/k,\quad k\in \mathbb{N}_+\right\}: 
$

\begin{eqnarray*}
	L_nf:=\frac{1}{l_n}\sum_{k=0}^{n-1}\frac{S_kf}{n-k},\ \ \
	R_nf:=\frac{1}{l_n}\sum_{k=1}^{n}\frac{S_kf}{k},
\end{eqnarray*}
respectively, where 

$$
l_n:=\sum_{k=1}^{n}\frac{1}{k}.
$$

It is evident that

\begin{equation} \label{NRL1}
\frac{q_{n-1}}{Q_n}=O\left(\frac{1}{n}\right),\text{ \  \ as \ \ }
n\rightarrow \infty
\end{equation}
and

\begin{equation} \label{NRL2}
\frac{q_0}{Q_n}=O\left(\frac{1}{\ln n}\right),\text{ \  as \ }
n\rightarrow \infty.
\end{equation}

Let $U^\alpha_n$ denote the N\"orlund mean, where

\begin{equation*}
\left\{ q_k=\frac{1}{(k+3){\ln^\alpha (k+3)}} \ : \ k\in\mathbb{N}, \ 0<\alpha\leq 1\right\} ,
\end{equation*}
that is \index{\file-1}{$U_n^\alpha f$}

\begin{equation*}
U_n^\alpha f:=\frac{1}{Q_n}\overset{n}{\underset{k=1}{\sum }}\frac{S_kf}{\left(n-k-3\right)\ln^\alpha\left(n-k-3\right)}.
\end{equation*}

It is obvious that
\begin{equation} \label{node4}
\frac{q_{n-1}}{Q_n}=O\left(\frac{1}{n}\right),\qquad \text{as} \qquad n\rightarrow\infty
\end{equation}
and

\begin{equation} \label{node5}
\frac{q_0}{Q_n}=\left\{
\begin{array}{c}
O\left(1/\ln(\ln n\right)),\text{ \ \ \ if \ \ \ }\alpha=1,\\
O\left( 1/\ln^{1-\alpha }n\right) ,\text{ \ \ \ \ \  if }0<\alpha <1.\end{array}\right.
\end{equation}

Let $\alpha\in\mathbb{R}_+$.
If we define the sequence $\{q_k:k\in \mathbb{N}\}$ by

\begin{equation*}
\left\{q_k=\log^{\alpha}(k+1) \ : \ k\in\mathbb{N} \ : \ \alpha>0\right\},
\end{equation*}
then we get the class of N\"orlund means with non-decreasing coefficients: \index{\file-1}{$\beta_n^{\alpha}f$}

\begin{equation*}
\beta_n^{\alpha}f:=\frac{1}{Q_n}\sum_{k=1}^{n}\log^{\alpha}\left( n-k-1\right)S_kf.
\end{equation*}%

%

It is obvious that

\begin{equation*}
\frac{n}{2}\log^{\alpha}\left(n/2\right)\leq Q_n\leq n\log^{\alpha}n.
\end{equation*}

It follows that

\begin{eqnarray} \label{node00}
\frac{1}{Q_n}\leq\frac{c}{n\log^{\alpha }n}= O\left(\frac{1}{n}\right)\rightarrow 0,\text{ \ as \ }n\rightarrow \infty.
\end{eqnarray}
and

\begin{eqnarray} \label{node001}
\frac{q_{n-1}}{Q_n}\leq\frac{c\log^{\alpha}\left(n-1\right)}{n\log^\alpha n}= O\left(\frac{1}{n}\right)\rightarrow 0,\text{ \ as \ }n\rightarrow \infty.
\end{eqnarray}


\section{Kernels of N\"orlund Means}\label{s3,4a}
\markright{Kernels of N\"orlund Means}

\ \ \ \ Now we study kernels of  N\"orlund means with respect to Vilenkin systems. 
If we  invoke Abel transformations for $a_j=A_j-A_{j-1}, \ j=1,...,n,$
\begin{eqnarray}\label{abel1N} \overset{n}{\underset{j=1}{\sum}}a_jb_{n-j}&=&A_{n}b_{0}+\overset{n-1}{\underset{j=1}{\sum}}A_j(b_j-b_{j+1}),  \\ \label{abel2N}
\overset{n}{\underset{j=M_N}{\sum}}a_jb_{n-j}&=&A_{n}b_{0}-A_{M_N-1}b_{n-M_N}+\overset{n-1}{\underset{j=M_N}{\sum}}A_j(b_j-b_{j+1}),
\end{eqnarray} 
when $b_j=q_j$, $a_j=1$ and $A_j=j$ for $j=0,1,...,n,$ then \eqref{abel1N} and \eqref{abel2N}  give the following identities:
\begin{eqnarray} \label{2b}
Q_n&:=&\overset{n-1}{\underset{j=0}{\sum}}q_j=\overset{n}{\underset{j=1}{\sum }}q_{n-j}\cdot 1 =\overset{n-1}{\underset{j=1}{\sum}}\left(q_{n-j}-q_{n-j-1}\right) j+q_0n,\\
\label{2b1}
&=&\overset{n-1}{\underset{j=M_N}{\sum}}q_{n-j}=\overset{n-1}{\underset{j=M_N}{\sum }}q_{n-j}\cdot 1 \\ \notag
&=&\overset{n-1}{\underset{j=M_N}{\sum}}\left(q_{n-j}-q_{n-j-1}\right) j+q_0n-(M_N-1) q_{n-M_N}.
\end{eqnarray}
Moreover, if  we instead use the Abel transformations \eqref{abel1N} and \eqref{abel2N} for $b_j=q_{n-j}$, $a_j=D_j$ and $A_j=jK_j$ for any $j=0,1,...,n-1$ we get the identities: 
\begin{eqnarray} \label{2bb}
F_n&=&\frac{1}{Q_n}\left(\overset{n-1}{\underset{j=1}{\sum}}\left(
q_{n-j}-q_{n-j-1}\right) jK_{j}+q_0nK_n\right),\\ \label{2bb1}
&=&\frac{1}{Q_n}\overset{n}{\underset{j=M_N}{\sum}}q_{n-j}D_j\\ \notag
&=&\frac{1}{Q_n}\left(\overset{n-1}{\underset{j=M_N}{\sum}}\left(
q_{n-j}-q_{n-j-1}\right) jK_{j}+q_0nK_n-q_{n-M_N}(M_N-1)K_{M_N-1} \right).
\end{eqnarray}
Analogously, if  we use the Abel transformations \eqref{abel1N} and \eqref{abel2N} for $b_j=q_j$, $a_j=S_j$ and $A_j=j\sigma_j$ for any $j=0,1,...,n-1$ we get the identities: 
\begin{eqnarray} \label{2bbb}
t_nf&=&\frac{1}{Q_n}\left(\overset{n-1}{\underset{j=1}{\sum}}\left(
q_{n-j}-q_{n-j-1}\right) j\sigma_{j}f+q_0n\sigma_nf\right)\\ \label{2bbb1}
&&\frac{1}{Q_n}\overset{n}{\underset{j=M_N}{\sum}}q_{n-j}S_jf\\ \notag
&=&\frac{1}{Q_n}\left(\overset{n-1}{\underset{j=M_N}{\sum}}\left(
q_{n-j}-q_{n-j-1}\right) j\sigma_{j}f+q_0nK_n-q_{n-M_N}(M_N-1)\sigma_{M_N-1}f \right).
\end{eqnarray}

First we consider N\"orlund kernels with respect to Vilenkin systems, which are generated by non-decreasing sequences:

\begin{lemma}\label{lemma0nn}
	Let $\{q_k:k\in\mathbb{N}\}$ be a sequence of non-decreasing numbers, satisfying the condition \eqref{fn01}.
	Then
	
	\begin{equation*}
	\left\vert F_n\right\vert\leq\frac{c}{n}\left\{\sum_{j=0}^{\left\vert n\right\vert }M_j\left\vert K_{M_j}\right\vert \right\},
	\end{equation*}
	where $c$ is an absolute constant.
\end{lemma}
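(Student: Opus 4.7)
The plan is to combine the Abel-summation representation \eqref{2bb} of $F_n$ with Corollary \ref{lemma7kn0} (which bounds $j|K_j|$ by a sum of dyadic Fej\'er kernels) and then use monotonicity plus hypothesis \eqref{fn01} to control the overall constant.

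First I would start from identity \eqref{2bb}, namely
\begin{equation*}
F_n=\frac{1}{Q_n}\Bigl(\sum_{j=1}^{n-1}(q_{n-j}-q_{n-j-1})\,jK_j+q_0\,nK_n\Bigr).
\end{equation*}
Since $\{q_k\}$ is non-decreasing, the differences $q_{n-j}-q_{n-j-1}$ are non-negative, so taking absolute values inside loses nothing:
\begin{equation*}
|F_n|\le\frac{1}{Q_n}\Bigl(\sum_{j=1}^{n-1}(q_{n-j}-q_{n-j-1})\,j|K_j|+q_0\,n|K_n|\Bigr).
\end{equation*}

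Next I would apply Corollary \ref{lemma7kn0} to each term $j|K_j|$ and to $n|K_n|$. Since $j\le n$ implies $|j|\le|n|$, the corollary gives
\begin{equation*}
j|K_j|\le c\sum_{l=0}^{|n|}M_l|K_{M_l}|,\qquad n|K_n|\le c\sum_{l=0}^{|n|}M_l|K_{M_l}|,
\end{equation*}
so the common factor $\sum_{l=0}^{|n|}M_l|K_{M_l}|$ can be pulled out:
\begin{equation*}
|F_n|\le\frac{c}{Q_n}\Bigl(\sum_{l=0}^{|n|}M_l|K_{M_l}|\Bigr)\Bigl(\sum_{j=1}^{n-1}(q_{n-j}-q_{n-j-1})+q_0\Bigr).
\end{equation*}

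The inner sum in $j$ telescopes to $q_{n-1}-q_0$, so the bracketed factor reduces to $q_{n-1}$. At this point the estimate becomes
\begin{equation*}
|F_n|\le\frac{c\,q_{n-1}}{Q_n}\sum_{l=0}^{|n|}M_l|K_{M_l}|,
\end{equation*}
and an application of the standing hypothesis \eqref{fn01}, which asserts $q_{n-1}/Q_n=O(1/n)$, immediately delivers the desired bound. I do not foresee a substantial obstacle here: the main thing to watch is the sign bookkeeping that non-decreasingness of $\{q_k\}$ makes the Abel differences non-negative (so the triangle inequality is harmless), together with the observation that the upper summation index in Corollary \ref{lemma7kn0} can be replaced by the uniform bound $|n|$ for all $j\le n$; once these two points are in place the calculation is essentially mechanical.
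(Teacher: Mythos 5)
Your proposal is correct and follows essentially the same route as the paper: both start from the Abel-transformed representation \eqref{2bb}, bound every $j\left\vert K_j\right\vert$ (and $n\left\vert K_n\right\vert$) by $c\sum_{l=0}^{\left\vert n\right\vert}M_l\left\vert K_{M_l}\right\vert$ via Corollary \ref{lemma7kn0}, telescope the non-negative differences to obtain the factor $q_{n-1}/Q_n$, and finish with condition \eqref{fn01}. No gaps.
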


\begin{proof}
	Let the sequence $\{q_k:k\in \mathbb{N}\}$ be non-decreasing. Then, by using (\ref{fn01}), we get that
	\begin{eqnarray*}
		\frac{1}{Q_n}\left(\overset{n-1}{\underset{j=1}{\sum }}\left\vert
		q_{n-j}-q_{n-j-1}\right\vert+q_0\right) 
		&\leq&\frac{1}{Q_n}\left(\overset{n-1}{\underset{j=1}{\sum }}\left(
		q_{n-j}-q_{n-j-1}\right)+q_0\right) \\
		&\leq &\frac{q_{n-1}}{Q_{n}}\leq \frac{c}{n}.
	\end{eqnarray*}
	
	Hence, in view of (\ref{fn01}) if we apply (\ref{fn5}) in Corollary \ref%
	{lemma7kn0} and use the equality (\ref{2bb}) we obtain that
	\begin{eqnarray*}
		\left\vert F_n\right\vert &\leq & \left( \frac{1}{Q_n}\left( \overset{n-1}{\underset{j=1}{\sum }}\left\vert q_{n-j}-q_{n-j-1} \right\vert+q_0\right)\right)\sum_{i=0}^{\left\vert n\right\vert } M_i\left\vert K_{M_i}\right\vert \\
		&=&\left(\frac{1}{Q_n}\left(\overset{n-1}{\underset{j=1}{\sum}}\left(
		q_{n-j}-q_{n-j-1}\right)+q_0\right)\right) \sum_{i=0}^{\left\vert
			n\right\vert}M_i\left\vert K_{M_i}\right\vert \\
		&\leq & \frac{q_{n-1}}{Q_n}\sum_{i=0}^{\left\vert n\right\vert
		}M_i\left\vert K_{M_i}\right\vert \leq\frac{c}{n}\sum_{i=0}^{\left\vert n\right\vert }M_i\left\vert K_{M_i}\right\vert.
	\end{eqnarray*}
	The proof is complete.
\end{proof}

We also state analogical estimate, but now without any restriction like (\ref{fn01}):

\begin{lemma}\label{lemma00nn} 
	Let $n\geq M_N$ and $\{q_k:k\in\mathbb{N}\}$ be a
	sequence of non-decreasing numbers. Then
	
	\begin{equation*}
	\left\vert \frac{1}{Q_n}\overset{n}{\underset{j=M_N}{\sum}}
	q_{n-j}D_j\right\vert\leq\frac{c}{M_N}\left\{\sum_{j=0}^{\left\vert
		n\right\vert }M_j\left\vert K_{M_j}\right\vert\right\},
	\end{equation*}
	where $c$ is an absolute constant.
\end{lemma}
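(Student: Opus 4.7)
The plan is to mimic the proof of Lemma \ref{lemma0nn}, but to use the partial Abel summation identity \eqref{2bb1} instead of \eqref{2bb}, and replace the use of condition \eqref{fn01} by a direct monotonicity estimate exploiting the restriction $j\geq M_N$.

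First I would invoke \eqref{2bb1} to rewrite
\begin{equation*}
\frac{1}{Q_n}\overset{n}{\underset{j=M_N}{\sum}}q_{n-j}D_j
=\frac{1}{Q_n}\left(\overset{n-1}{\underset{j=M_N}{\sum}}(q_{n-j}-q_{n-j-1})\,jK_{j}+q_0\,nK_n-q_{n-M_N}(M_N-1)K_{M_N-1}\right),
\end{equation*}
take absolute values, and apply estimate \eqref{fn5} of Corollary \ref{lemma7kn0} to each of the three Fej\'er-type pieces. Since $j\leq n$ (and $M_N-1<n$), every kernel factor $j|K_j|$, $n|K_n|$ and $(M_N-1)|K_{M_N-1}|$ is dominated by $c\sum_{l=0}^{|n|}M_l|K_{M_l}|$, so the whole expression is bounded above by
\begin{equation*}
\frac{c}{Q_n}\left(\sum_{j=M_N}^{n-1}|q_{n-j}-q_{n-j-1}|+q_0+q_{n-M_N}\right)\sum_{l=0}^{|n|}M_l|K_{M_l}|.
\end{equation*}

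Next I would use the monotonicity of $\{q_k\}$: since the sequence is non-decreasing, the sum $\sum_{j=M_N}^{n-1}|q_{n-j}-q_{n-j-1}|$ telescopes to $q_{n-M_N}-q_0$, so the bracket simplifies to $2q_{n-M_N}$. Thus the problem reduces to showing $q_{n-M_N}/Q_n\leq 1/M_N$.

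The last step (which I view as the only real content of the proof) is this estimate for $q_{n-M_N}/Q_n$. Using once more that $\{q_k\}$ is non-decreasing together with $n\geq M_N$, we have $q_k\geq q_{n-M_N}$ for every index $k$ in the range $n-M_N\leq k\leq n-1$, a block of exactly $M_N$ terms in the sum defining $Q_n$. Hence
\begin{equation*}
Q_n=\sum_{k=0}^{n-1}q_k\geq\sum_{k=n-M_N}^{n-1}q_k\geq M_N\,q_{n-M_N},
\end{equation*}
which gives $q_{n-M_N}/Q_n\leq 1/M_N$. Combining with the previous display completes the proof. The main (minor) obstacle is just the bookkeeping in identity \eqref{2bb1} and checking that the boundary term $q_{n-M_N}(M_N-1)K_{M_N-1}$ is absorbed on the same footing as the other pieces; once that is done, the monotonicity argument and the counting estimate for $Q_n$ deliver the bound $c/M_N$ directly, bypassing any need for condition \eqref{fn01}.
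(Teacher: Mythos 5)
Your argument is correct, and it reaches the bound by a genuinely different mechanism than the paper, even though both start from the same Abel transformation \eqref{2bb1} and the same kernel estimate \eqref{fn5} of Corollary \ref{lemma7kn0}. In the paper the factor $1/M_N$ is extracted from the \emph{kernel} side: since every index appearing after the transformation satisfies $j\geq M_N-1$, one has $\left\vert K_j\right\vert\leq\frac{c}{M_N}\sum_{l=0}^{\left\vert n\right\vert}M_l\left\vert K_{M_l}\right\vert$, and the weighted coefficient total $\sum_{j}\left\vert q_{n-j}-q_{n-j-1}\right\vert j+q_0n+q_{n-M_N}(M_N-1)$ is then shown (via \eqref{2b} and the block estimate $q_{n-M_N}(M_N-1)\leq Q_n$) to be at most $2Q_n$, so the normalized coefficient sum is just an absolute constant. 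You instead use the crude bound $j\left\vert K_j\right\vert\leq c\sum_{l=0}^{\left\vert n\right\vert}M_l\left\vert K_{M_l}\right\vert$ uniformly for all three pieces, telescope the unweighted coefficient sum to $2q_{n-M_N}$, and extract $1/M_N$ from the \emph{coefficient} side via $Q_n\geq\sum_{k=n-M_N}^{n-1}q_k\geq M_N q_{n-M_N}$ (which is where the hypothesis $n\geq M_N$ enters). Your block-counting estimate is a close cousin of the inequality $q_{n-M_N}(M_N-1)\leq Q_n$ that the paper also uses, but deployed to a different end; your route avoids the weighted telescoping with the factors $j$ and isolates the role of monotonicity somewhat more transparently, while the paper's route keeps the structure parallel to Lemma \ref{lemma0nn}. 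Both are valid and yield the stated estimate with an absolute constant.
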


\begin{proof}
	Let $M_N-1 \leq j\leq n.$ In the view of (\ref{fn5}) in Corollary \ref{lemma7kn0} we find that
	
	\begin{eqnarray*}
		\left\vert K_j\right\vert &\leq& \frac{1}{j}\sum_{l=0}^{\left\vert
			j\right\vert }M_l\left\vert K_{M_l}\right\vert  \leq \frac{1}{M_N-1}\sum_{l=0}^{\left\vert n\right\vert }M_l\left\vert K_{M_l}\right\vert\\
		&\leq & \frac{c}{M_N}\sum_{l=0}^{\left\vert n\right\vert }M_l\left\vert K_{M_l}\right\vert
	\end{eqnarray*}
	
	Since the sequence $\{q_k:k\in \mathbb{N}\}$ be non-decreasing we get that
	
	\begin{eqnarray*}
		q_{n-M_N}(M_N-1)&=&\overset{M_N-1}{\underset{j=0}{\sum}}q_{n-M_N}\leq \overset{M_N-1}{\underset{j=0}{\sum}}q_{n-M_N+j}\\
		&=&q_{n-M_N}+q_{n-M_N+2}+...+q_{n-M_N+(M_N-1)}\leq Q_n
	\end{eqnarray*}
	and
	
	\begin{eqnarray*}
		&&\overset{n-1}{\underset{j=M_N}{\sum }}\left\vert
		q_{n-j}-q_{n-j-1}\right\vert j+q_0n+q_{n-M_N}(M_N-1) \\
		&\leq& \overset{n-1}{\underset{j=1}{\sum}}\left\vert q_{n-j}-q_{n-j-1}\right\vert j+q_0n+q_{n-M_N}(M_N-1) \\
		&=&\overset{n-1}{\underset{j=1}{\sum}}\left(q_{n-j}-q_{n-j-1}\right)
		j+q_0n+q_{n-M_N}(M_N-1)\\
		&=&Q_n+q_{n-M_N}(M_N-1)\leq 2 Q_n.
	\end{eqnarray*}
	
	By using the Abel transformation \eqref{2bb1} we can conclude that
	\begin{eqnarray*}
		&&\left\vert \frac{1}{Q_n}\overset{n}{\underset{j=M_{N}}{\sum }}
		q_{n-j}D_j\right\vert \\
		&=&\left\vert\frac{1}{Q_n}\left( \overset{n-1}{\underset{j=M_N}{\sum}}
		\left(q_{n-j}-q_{n-j-1}\right)jK_j+q_0nK_n+q_{n-M_N}(M_N-1)K_{M_N-1}\right)\right\vert \\
		&\leq& \left(\frac{1}{Q_n}\left(\overset{n-1}{\underset{j=M_N}{\sum}}
		\left\vert q_{n-j}-q_{n-j-1}\right\vert j+q_0n\right)\right)\frac{c}{
			M_N}\sum_{i=0}^{\left\vert n\right\vert }M_i\left\vert K_{M_i}\right\vert \\
		&\leq&\frac{c}{M_N}\sum_{i=0}^{\left\vert n\right\vert}M_i\left\vert
		K_{M_i}\right\vert.
	\end{eqnarray*}
	The proof is complete.
\end{proof}

Now we prove a lemma, which is very important for our further investigation to prove norm convergence in Lebesgue spaces of N\"orlund means generated by non-decreasing sequences $\{q_k:k\in \mathbb{N}\}$ in this chapter.

\begin{lemma}\label{Corollary3nn101} Let $\{q_k:k\in \mathbb{N}\}$ be a sequence of non-decreasing numbers. Then, for any $n, N\in \mathbb{N_+}$,
	\begin{eqnarray} \label{1.71}
	&&\int_{G_m} F_n(x) d\mu (x)=1, \\
	&&\sup_{n\in\mathbb{N}}\int_{G_m}\left\vert F_n(x)\right\vert d\mu(x)\leq c<\infty,\label{1.72} \\
	&&\sup_{n\in\mathbb{N}}\int_{G_m \backslash I_N}\left\vert F_n(x)\right\vert d\mu (x)\rightarrow  0, \ \ \text{as} \ \ n\rightarrow  \infty, \label{1.73}
	\end{eqnarray}
	where $c$ is an absolute constant.
\end{lemma}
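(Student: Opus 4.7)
The strategy is to leverage the Abel transformation identity \eqref{2bb} to exhibit $F_n$ as a \emph{convex} combination of the Fej\'er kernels $K_1,\ldots,K_n$ with non-negative weights, and then inherit the three approximate-identity properties directly from the corresponding properties of $K_j$ established in Lemma \ref{lemma7kn}.

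First, starting from \eqref{2bb}, I would rewrite
\begin{equation*}
F_n = \frac{1}{Q_n}\left(\sum_{j=1}^{n-1}(q_{n-j}-q_{n-j-1})\, j K_j + q_0 n K_n\right).
\end{equation*}
Because $\{q_k\}$ is non-decreasing, every coefficient $q_{n-j}-q_{n-j-1}$ is non-negative, and the companion identity \eqref{2b} guarantees
\begin{equation*}
\frac{1}{Q_n}\left(\sum_{j=1}^{n-1}(q_{n-j}-q_{n-j-1})\, j + q_0 n\right) = 1.
\end{equation*}
Thus $F_n$ is a convex combination of $K_1,\ldots,K_n$ with coefficients summing to $1$.

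Each conclusion then follows by applying the corresponding property of the Fej\'er kernels inside this convex combination. For \eqref{1.71}, I would integrate term-by-term and invoke $\int_{G_m} K_j\, d\mu = 1$ from \eqref{fn40}, obtaining $\int_{G_m} F_n\, d\mu = 1$. For \eqref{1.72}, the non-negativity of the weights lets me pass the absolute value under the sum, after which the uniform bound $\int_{G_m}|K_j|\, d\mu \leq c$ from \eqref{fn4} together with the convex-combination identity yields $\int_{G_m}|F_n|\, d\mu \leq c$. For \eqref{1.73}, I would set $\varepsilon_N := \sup_{j\in\mathbb{N}_+}\int_{G_m\setminus I_N}|K_j|\, d\mu$; by \eqref{fn400} we have $\varepsilon_N \to 0$ as $N\to\infty$, so the same argument gives $\int_{G_m\setminus I_N}|F_n|\, d\mu \leq \varepsilon_N$ independently of $n$.

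The key — and essentially the only — point to watch is the non-negativity of the Abel weights $q_{n-j}-q_{n-j-1}$, which is exactly where the non-decreasing hypothesis enters; without it one could not pull the absolute value inside the sum for free, and the more delicate pointwise estimate of Lemma \ref{lemma0nn} would be needed. Granted monotonicity, no further obstacle arises and all three statements reduce transparently to the known Fej\'er-kernel facts.
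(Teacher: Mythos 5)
Your handling of \eqref{1.71} and \eqref{1.72} is correct and is essentially the paper's own argument: the Abel identity \eqref{2bb} exhibits $F_n$ as a combination of Fej\'er kernels whose weights $(q_{n-j}-q_{n-j-1})j/Q_n$ and $q_0n/Q_n$ are non-negative (this is where monotonicity enters) and sum to $1$ by \eqref{2b}, after which \eqref{fn40} and \eqref{fn4} give the first two properties.

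Your argument for \eqref{1.73}, however, has a genuine gap. You set $\varepsilon_N:=\sup_{j}\int_{G_m\setminus I_N}\vert K_j\vert\,d\mu$ and claim that \eqref{fn400} yields $\varepsilon_N\to 0$ as $N\to\infty$. That is false: the decay in \eqref{fn400} is in the kernel index for fixed $N$, not uniform over the index. For instance $K_1=D_1\equiv 1$, so $\int_{G_m\setminus I_N}\vert K_1\vert\,d\mu=1-1/M_N\geq 1/2$ for every $N$, hence $\varepsilon_N\not\to 0$ and your uniform-in-$n$ bound is vacuous. Moreover, what \eqref{1.73} must deliver (it is the approximate-identity condition (A3) used in Proposition \ref{theoremconv}) is that for each \emph{fixed} $N$ the tail integral of $F_n$ tends to $0$ as $n\to\infty$; convexity of the weights alone cannot give this, because the weights may place non-negligible mass on small indices $j$, exactly where $\int_{G_m\setminus I_N}\vert K_j\vert\,d\mu$ is not small. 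This is the delicate point the paper's proof addresses: writing $\alpha_j:=\int_{G_m\setminus I_N}\vert K_j\vert\,d\mu\to 0$ as $j\to\infty$ ($N$ fixed), it splits the sum at an index $N_0$ with $\alpha_j<\varepsilon$ for $j>N_0$, bounds the part with $j>N_0$ by $\varepsilon$ via \eqref{2b}, and shows the part with $j\leq N_0$ vanishes because its total weight is at most $2q_{n-1}N_0/Q_n\to 0$ as $n\to\infty$, while $q_0n\alpha_n/Q_n\leq\alpha_n\to 0$. You need this two-scale argument (fixed cutoff $N_0$ versus $n\to\infty$); note also that the head estimate tacitly uses $q_{n-1}/Q_n\to 0$ (as under \eqref{fn01} or for bounded monotone sequences), which is where a condition of that type actually enters the proof.
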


\begin{proof}
	According to Lemma \ref{dn2.6int1}  we readily obtain \eqref{1.71}. 
	By using \eqref{fn4} in Corollary \ref{lemma7kn} combined with \eqref{2b} and \eqref{2bb} we get that
	\begin{eqnarray*}
		\int_{G_m }\left\vert F_n\right\vert d\mu
		&\leq&\frac{1}{Q_n}\overset{n-1}{\underset{j=1}{\sum}}\left(q_{n-j}-q_{n-j-1}\right)j\int_{G_m}\left\vert K_j\right\vert d\mu 
		+\frac{q_0n}{Q_n}\int_{G_m}\vert K_n\vert d\mu\\
		&\leq&\frac{c}{Q_n}\overset{n-1}{\underset{j=1}{\sum}}\left(q_{n-j}-q_{n-j-1}\right)j+\frac{cq_0n}{Q_n}<c<\infty,
	\end{eqnarray*}
	so also \eqref{1.72} is proved.
	
	According to \eqref{fn400} in Corollary \ref{lemma7kn} and also \eqref{2b} and \eqref{2bb} we find that
	\begin{eqnarray*}
		\int_{G_m \backslash I_N}\left\vert F_n\right\vert d\mu 
		&\leq&\frac{1}{Q_n}\overset{n-1}{\underset{j=0}{\sum}}\left(q_{n-j}-q_{n-j-1}\right)j\int_{G_m \backslash I_N}\left\vert K_j\right\vert d\mu \\ &+&\frac{q_0n}{Q_n}\int_{G_m \backslash I_N}\vert K_n\vert\\
		&\leq&\frac{1}{Q_n}\overset{n-1}{\underset{j=0}{\sum}}\left(q_{n-j}-q_{n-j-1}\right)j\alpha_j+\frac{q_0n\alpha_n}{Q_n},\\ \notag
		&:=&I+II.
	\end{eqnarray*}
	where  $\alpha_n\to 0, \ \ \text{as} \ \ n\to\infty.$
		Since the sequence is non-decreasing, we can conclude that	
	$$II=\frac{q_{0}n\alpha_n}{Q_n}\leq \alpha_n\to 0, \ \ \text{as} \ \ n\to\infty.$$
	On the other hand, for any $\varepsilon>0$ there exists $N_0\in \mathbb{N},$ such that
	$$\alpha_n< \varepsilon, \ \ \text{ when }  \ \ n>N_0.$$ 
	Moreover,	
	\begin{eqnarray*}
		I&=&\frac{1}{Q_n}\overset{n-1}{\underset{j=1}{\sum}}\left(q_{n-j}-q_{n-j-1}\right)j\alpha_j \\
		&=&\frac{1}{Q_n}\overset{N_0}{\underset{j=1}{\sum}}\left(q_{n-j}-q_{n-j-1}\right)j\alpha_j
		+\frac{1}{Q_n}\overset{n-1}{\underset{j=N_0+1}{\sum}}\left(q_{n-j}-q_{n-j-1}\right)j\alpha_j
		=I_1+I_2.
	\end{eqnarray*}
	Since sequence is non-decreasing, we can conclude that 
	$$\vert q_{n-j}-q_{n-j-1}\vert<2q_{n-1}$$
	\begin{eqnarray*}
		I_1=\frac{1}{Q_n}\overset{N_0}{\underset{j=0}{\sum}}\left(q_{n-j}-q_{n-j-1}\right)j\alpha_j \leq \frac{2q_{n-1}N_0}{Q_n}\to 0, \ \ \ \text{as} \ \ \ n\to \infty
	\end{eqnarray*}
	and, furthermore,
	\begin{eqnarray*}
		I_2=\frac{1}{Q_n}\overset{n-1}{\underset{j=N_0+1}{\sum}}\left(q_{n-j}-q_{n-j-1}\right)j\alpha_j
		&\leq& \frac{\varepsilon}{Q_n}\overset{n-1}{\underset{j=N_0+1}{\sum}}\left(q_{n-j}-q_{n-j-1}\right)j\\
		&\leq& \frac{\varepsilon}{Q_n}\overset{n-1}{\underset{j=0}{\sum}}\left(q_{n-j}-q_{n-j-1}\right)j<\varepsilon,
	\end{eqnarray*}
	and it follows that also $I\to 0$ so also \eqref{1.73} is proved.
	The proof is complete.
\end{proof}


We also consider the kernel of N\"orlund means with respect to the Vilenkin systems which are generated by non-increasing sequences $\{q_k:k\in\mathbb{N}\}$, but now with some new restrictions on the indexes:

\begin{lemma} \label{lemma0nn1}
	Let $\{q_k:k\in\mathbb{N}\}$ be a sequence of
	non-increasing numbers satisfying the condition \eqref{fn0}.
	Then
	
	\begin{equation*}
	\left\vert F_n\right\vert \leq\frac{c}{n}\left\{\sum_{j=0}^{\left\vert n\right\vert }M_j\left\vert K_{M_j}\right\vert\right\},
	\end{equation*}
	where $c$ is an absolute constant. \\
\end{lemma}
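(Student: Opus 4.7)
The plan is to mimic the proof of Lemma \ref{lemma0nn}, but the non-increasing assumption forces one small change: the sum $\sum_{j=1}^{n-1}|q_{n-j}-q_{n-j-1}|$ is no longer a direct telescoping of $q$-values because the differences are now negative. I would first invoke the Abel transformation identity \eqref{2bb},
\begin{equation*}
F_n=\frac{1}{Q_n}\left(\overset{n-1}{\underset{j=1}{\sum}}\left(q_{n-j}-q_{n-j-1}\right) jK_{j}+q_0 n K_n\right),
\end{equation*}
and then estimate $j|K_j|$ and $n|K_n|$ by \eqref{fn5} from Corollary \ref{lemma7kn0}, each one bounded by $\sum_{l=0}^{|n|}M_l|K_{M_l}|$. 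Pulling this common factor out yields
\begin{equation*}
|F_n|\leq \frac{1}{Q_n}\left(\sum_{j=1}^{n-1}|q_{n-j}-q_{n-j-1}|+q_0\right)\sum_{l=0}^{|n|}M_l|K_{M_l}|.
\end{equation*}

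The second step is to dispose of the coefficient factor. Since $\{q_k\}$ is non-increasing, $|q_{n-j}-q_{n-j-1}|=q_{n-j-1}-q_{n-j}$, and the sum telescopes to $q_0-q_{n-1}\leq q_0$, so
\begin{equation*}
\frac{1}{Q_n}\left(\sum_{j=1}^{n-1}|q_{n-j}-q_{n-j-1}|+q_0\right)\leq \frac{2q_0}{Q_n}.
\end{equation*}
Finally the hypothesis \eqref{fn0} gives $1/Q_n=O(1/n)$, so this whole factor is $O(1/n)$, absorbing the constant $q_0$ into the absolute constant $c$. Combining the two estimates delivers
\begin{equation*}
|F_n|\leq \frac{c}{n}\sum_{l=0}^{|n|}M_l|K_{M_l}|,
\end{equation*}
which is exactly the claim.

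No single step looks particularly delicate. The only point where the argument diverges from the non-decreasing case of Lemma \ref{lemma0nn} is that there one uses $q_{n-1}/Q_n=O(1/n)$ directly after a positive telescoping, while here one first telescopes $|q_{n-j}-q_{n-j-1}|$ to get the constant $q_0$ and then uses \eqref{fn0} to trade $1/Q_n$ for $c/n$. The absolute-value step is the only spot where I have to be a little careful; everything else is an appeal to Corollary \ref{lemma7kn0} and the hypothesis.
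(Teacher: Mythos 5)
Your proposal is correct and follows essentially the same route as the paper's proof: Abel transformation \eqref{2bb}, the estimate \eqref{fn5} from Corollary \ref{lemma7kn0} applied to $j|K_j|$ and $n|K_n|$, telescoping the non-increasing differences to $2q_0-q_{n-1}\leq 2q_0$, and then condition \eqref{fn0} to convert $1/Q_n$ into $c/n$. The handling of the sign of the differences is exactly the point the paper also makes explicit, so there is nothing missing.
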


\begin{proof}
	Let the sequence $\{q_k:k\in \mathbb{N}\}$ be non-increasing and satisfying
	condition (\ref{fn0}). Then
	\begin{eqnarray*}
		\frac{1}{Q_n}\left( \overset{n-1}{\underset{j=1}{\sum}}\left\vert
		q_{n-j}-q_{n-j-1}\right\vert+q_0\right) 
		&\leq& \frac{1}{Q_n}\left(\overset{n-1}{\underset{j=1}{\sum}}-
		\left(q_{n-j}-q_{n-j-1}\right)+q_0\right) \\
		&\leq&\frac{2q_0-q_{n-1}}{Q_n}\leq\frac{2q_0}{Q_n}\leq\frac{c}{n}.
	\end{eqnarray*}
	
	Hence, if we apply (\ref{fn5}) in Corollary \ref{lemma7kn0} and invoke equalities (\ref{2b}) and (\ref{2bb}), then we get that
	\begin{eqnarray*}
		\left\vert F_n\right\vert &\leq&\left(\frac{1}{Q_n} \left(\overset{n-1}{\underset{j=1}{\sum }}\left\vert q_{n-j}-q_{n-j-1}\right\vert +q_0\right)\right) \sum_{i=0}^{\left\vert n\right\vert }M_{i}\left\vert K_{M_i}\right\vert \\
		&=&\left(\frac{1}{Q_n}\left(\overset{n-1}{\underset{j=1}{\sum}}-\left(
		q_{n-j}-q_{n-j-1}\right)+q_0\right)\right) \sum_{i=0}^{\left\vert
			n\right\vert }M_{i}\left\vert K_{M_{i}}\right\vert \\
		&\leq& \frac{2q_0-q_{n-1}}{Q_n}\sum_{i=0}^{\left\vert n\right\vert
		}M_i\left\vert K_{M_i}\right\vert  \\
	&\leq&  \frac{2q_0}{Q_n}\sum_{i=0}^{\left\vert n\right\vert}M_i\left\vert K_{M_i}\right\vert \\
	&\leq& \frac{c}{n}\sum_{i=0}^{\left\vert n\right\vert }M_i\left\vert
		K_{M_i}\right\vert.
	\end{eqnarray*}
	
	The proof is complete.
\end{proof}

The next  result is very important for our further investigation in this Chapter to prove norm convergence in Lebesgue spaces of N\"orlund means generated by a non-increasing sequence $\{q_k:k\in \mathbb{N}\}$:

\begin{corollary}
	\label{corollary3n9} Let $\{q_k:k\in \mathbb{N}\}$ be a sequence of non-increasing numbers satisfying the condition \eqref{fn0}. Then, for any $n,N\in \mathbb{N_+}$,
	
	\begin{eqnarray} \label{1.71inc}
	&&\int_{G_m} F_n(x) d\mu (x)=1, \\
	&&\sup_{n\in\mathbb{N}}\int_{G_m}\left\vert F_n(x)\right\vert d\mu(x)\leq c<\infty,\label{1.72inc} \\
	&&\sup_{n\in\mathbb{N}}\int_{G_m \backslash I_N}\left\vert F_n(x)\right\vert d\mu (x)\rightarrow  0, \ \ \text{as} \ \ n\rightarrow  \infty, \label{1.73inc}
	\end{eqnarray}
	where $c$ is an absolute constant.
\end{corollary}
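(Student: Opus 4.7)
The plan is to mirror the proof of Lemma \ref{Corollary3nn101} almost line by line, adjusting only the sign handling for the now non-increasing sequence $\{q_k\}$ and invoking condition \eqref{fn0} in place of \eqref{fn01}.

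For \eqref{1.71inc}, I would simply apply Proposition \ref{prop1.5}, which gives $\int_{G_m} D_k\,d\mu = 1$ for $k\geq 1$. Plugging into the definition \eqref{1.3T} of $F_n$ yields $\int_{G_m} F_n\,d\mu = Q_n^{-1}\sum_{k=1}^{n} q_{n-k} = 1$.

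For \eqref{1.72inc}, I would use the Abel representation \eqref{2bb}, take absolute values, and integrate term by term, applying the uniform $L^1$-bound \eqref{fn4} for the Fej\'er kernels. The key algebraic observation is that for a non-increasing sequence one has $|q_{n-j}-q_{n-j-1}| = -(q_{n-j}-q_{n-j-1})$, so the identity \eqref{2b} rearranges to
\begin{equation*}
\overset{n-1}{\underset{j=1}{\sum}}|q_{n-j}-q_{n-j-1}|\,j \;=\; q_0 n - Q_n.
\end{equation*}
Combining this with $\int_{G_m}|K_j|\,d\mu\leq c$ gives
\begin{equation*}
\int_{G_m}|F_n|\,d\mu \;\leq\; \frac{c}{Q_n}\bigl(q_0 n - Q_n + q_0 n\bigr) \;\leq\; \frac{2cq_0 n}{Q_n},
\end{equation*}
which is bounded by condition \eqref{fn0}.

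For \eqref{1.73inc}, I would follow the same decomposition as in Lemma \ref{Corollary3nn101}. Setting $\alpha_j := \int_{G_m\setminus I_N}|K_j|\,d\mu$, which satisfies $\alpha_j\to 0$ by \eqref{fn400}, the Abel identity \eqref{2bb} yields
\begin{equation*}
\int_{G_m\setminus I_N}|F_n|\,d\mu \;\leq\; \frac{1}{Q_n}\overset{n-1}{\underset{j=1}{\sum}}|q_{n-j}-q_{n-j-1}|\,j\,\alpha_j + \frac{q_0 n\,\alpha_n}{Q_n} \;=:\; I + II.
\end{equation*}
The term $II$ tends to zero directly by \eqref{fn0} and $\alpha_n\to 0$. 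For $I$, fix $\varepsilon>0$, choose $N_0$ with $\alpha_j<\varepsilon$ for $j>N_0$, and split $I=I_1+I_2$. In the head $I_1$ use the crude bound $|q_{n-j}-q_{n-j-1}|\leq q_0$ together with $Q_n\geq c n$ (from \eqref{fn0}) to get $I_1\lesssim N_0 q_0/Q_n \to 0$; in the tail $I_2$ reintroduce the non-increasing identity above to obtain $I_2 \leq \varepsilon(q_0 n - Q_n)/Q_n \leq c\varepsilon$.

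The proof is essentially a sign-adjusted copy of Lemma \ref{Corollary3nn101}; the only genuine care needed is when flipping the signs in the Abel-type identity and ensuring that condition \eqref{fn0}, rather than \eqref{fn01}, supplies the $O(n)$ control of $Q_n$ at precisely the right places. No step presents a real obstacle.
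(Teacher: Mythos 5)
Your proof is correct and takes essentially the same route as the paper: the paper's own proof of Corollary \ref{corollary3n9} merely states that the argument is analogous to Corollary \ref{lemma7kn} and Lemma \ref{Corollary3nn101}, and your write-up is exactly that analogy carried out, with the sign-flipped Abel identity $\sum_{j=1}^{n-1}\left\vert q_{n-j}-q_{n-j-1}\right\vert j=q_0n-Q_n$ and condition \eqref{fn0} supplying the needed bound on $n/Q_n$. No gap.
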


\begin{proof}
	If we compare the estimation of $K_n$ in Lemma \ref{lemma7kn0} or  $F_n$ in Lemma \ref{lemma0nn} with the estimation of $F_n$ in Lemma \ref{lemma0nn1} we find that they are quite the same. Hence, the proof is analogous to those of Corollary \ref{lemma7kn} and Lemma \ref{Corollary3nn101}, so, we leave out the details.
	
\end{proof}


Finally we study some special subsequences of kernels of N\"orlund and $T$ means:
\begin{lemma}\label{lemma0nnT121}Let $n\in \mathbb{N}$. Then	
	
	\begin{eqnarray} \label{1.71alphaT2j} F_{M_n}(x)=D_{M_n}(x)-\psi_{M_n-1}(x)\overline{F^{-1}}_{M_n}(x)
	\end{eqnarray}
	and
	
	\begin{eqnarray} \label{1.71alphaT2j1} F^{-1}_{M_n}(x)=D_{M_n}(x)-\psi_{M_n-1}(x)\overline{F}_{M_n}(x).
	\end{eqnarray}
\end{lemma}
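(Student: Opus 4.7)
The plan is to obtain both identities by applying the Dirichlet kernel identity \eqref{dn22} from Lemma \ref{dn}, namely
$$D_{M_n-j}(x) = D_{M_n}(x) - \psi_{M_n-1}(x)\overline{D}_j(x) \qquad (j<M_n),$$
termwise inside the sums defining $F_{M_n}$ and $F^{-1}_{M_n}$ and then reindexing the resulting sum.

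For \eqref{1.71alphaT2j}, I would start from the definition
$$F_{M_n}=\frac{1}{Q_{M_n}}\sum_{k=1}^{M_n}q_{M_n-k}D_k,$$
write each $D_k$ as $D_{M_n-(M_n-k)}$, and apply \eqref{dn22} with $j=M_n-k$ (so that $0\leq j<M_n$) to get $D_k=D_{M_n}-\psi_{M_n-1}\overline{D}_{M_n-k}$. Substituting this into the sum and using the elementary identity $\sum_{k=1}^{M_n}q_{M_n-k}=\sum_{j=0}^{M_n-1}q_j=Q_{M_n}$ gives
$$F_{M_n}=D_{M_n}-\psi_{M_n-1}\cdot\frac{1}{Q_{M_n}}\sum_{k=1}^{M_n}q_{M_n-k}\overline{D}_{M_n-k}.$$
The change of summation index $j=M_n-k$ (together with $D_0=0$) turns the remaining sum into $Q_{M_n}\overline{F^{-1}}_{M_n}$, yielding \eqref{1.71alphaT2j}.

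The argument for \eqref{1.71alphaT2j1} is entirely parallel. Starting from $F^{-1}_{M_n}=\frac{1}{Q_{M_n}}\sum_{k=1}^{M_n}q_k D_k$ and applying \eqref{dn22} exactly as above produces
$$F^{-1}_{M_n}=D_{M_n}-\psi_{M_n-1}\cdot\frac{1}{Q_{M_n}}\sum_{k=1}^{M_n}q_k\overline{D}_{M_n-k},$$
and the same reindexing $j=M_n-k$ identifies this last sum with $Q_{M_n}\overline{F}_{M_n}$. The proof is then complete.

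I do not anticipate a serious obstacle: once \eqref{dn22} is invoked termwise, everything reduces to bookkeeping with the summation indices. The only mildly delicate point is to keep track of the endpoints of the sums and the fact that $D_0=0$, so that after reindexing the weights $q_{M_n-k}$ and $q_k$ cleanly swap roles between the two kernels and thereby interchange the identities \eqref{1.71alphaT2j} and \eqref{1.71alphaT2j1}.
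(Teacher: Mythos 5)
Your proof is correct and takes essentially the same route as the paper: both apply the Dirichlet-kernel identity \eqref{dn22} termwise and then reindex, the only cosmetic difference being that the paper performs the substitution $k\mapsto M_n-k$ before invoking \eqref{dn22}, while you invoke it first and reindex afterwards. The endpoint bookkeeping with $D_0=0$ and the swapping of the weights $q_{M_n-k}$ and $q_k$ that you note is exactly what the paper's computation relies on as well.
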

\begin{proof} By using \eqref{dn22} in Lemma \ref{dn} we get that
	
	\begin{eqnarray*}
		F_{M_n}(x)&=&\frac{1}{Q_{M_n}}\overset{M_n}{\underset{k=1}{\sum}}q_{M_n-k}D_k(x)=\frac{1}{Q_{M_n}}\overset{M_n-1}{\underset{k=0}{\sum }}q_kD_{M_n-k}(x)\\
		&=&\frac{1}{Q_{M_n}}\overset{M_n-1}{\underset{k=0}{\sum }}q_k\left(D_{M_n}(x)-\psi_{M_n-1}(x)\overline{D}_j(x)\right)\\
		&=&D_{M_n}(x)-\psi_{M_n-1}(x)\overline{F^{-1}}_{M_n}(x)
	\end{eqnarray*}
	Hence, \eqref{1.71alphaT2j} is proved. Identity \eqref{1.71alphaT2j1} is proved analogously so
	the proof is complete.
	
\end{proof}

Next four lemmas will be used to prove norm convergence and almost everywhere convergence of subsequences of N\"orlund  means:

\begin{corollary} \label{corollary3nn}
	Let $\{q_k:k\in \mathbb{N}\}$ be a sequence of non-decreasing numbers. Then, for any  $n,N\in \mathbb{N_+}$,
	
	\begin{eqnarray} \label{1.71alpha-}
	&&\int_{G_m} F^{-1}_{M_n}(x) d\mu (x)=1, \\
	&&\sup_{n\in\mathbb{N}}\int_{G_m}\left\vert F^{-1}_{M_n}(x)\right\vert d\mu(x)\leq c<\infty,\label{1.72alpha-} \\
	&&\sup_{n\in\mathbb{N}}\int_{G_m \backslash I_N}\left\vert F^{-1}_{M_n}(x)\right\vert d\mu (x)\rightarrow  0, \ \ \text{as} \ \ n\rightarrow  \infty, \label{1.73alpha-}
	\end{eqnarray}
\end{corollary}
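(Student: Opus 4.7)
The plan is to reduce all three claims to the analogous properties of $F_{M_n}$ established in Lemma \ref{Corollary3nn101} by invoking the identity
\begin{equation*}
F^{-1}_{M_n}(x) = D_{M_n}(x) - \psi_{M_n-1}(x) \overline{F}_{M_n}(x)
\end{equation*}
provided by \eqref{1.71alphaT2j1} of Lemma \ref{lemma0nnT121}. Since $|\psi_{M_n-1}(x)|=1$ by Proposition \ref{vilprop}, this identity immediately yields the pointwise majorant $|F^{-1}_{M_n}(x)| \leq |D_{M_n}(x)| + |F_{M_n}(x)|$, which is the key inequality driving the two integral estimates.

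For \eqref{1.72alpha-}, I integrate the pointwise bound over $G_m$: the first term contributes $\|D_{M_n}\|_1 = 1$ by Lemma \ref{dn2.6int1}, and the second is uniformly bounded in $n$ by estimate \eqref{1.72} of Lemma \ref{Corollary3nn101}. For \eqref{1.73alpha-}, I integrate the same bound over $G_m \setminus I_N$: as soon as $n \geq N$, Paley's Lemma (Lemma \ref{dn2.3}) confines the support of $D_{M_n}$ to $I_n \subseteq I_N$, so the Dirichlet contribution vanishes outright; what remains is $\int_{G_m \setminus I_N} |F_{M_n}| \, d\mu$, which tends to zero as $n \to \infty$ by \eqref{1.73} of Lemma \ref{Corollary3nn101}.

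For the normalisation \eqref{1.71alpha-}, the most direct route is to compute from the definition of $F^{-1}_{M_n}$: since $\int_{G_m} D_k \, d\mu = 1$ for every $k \geq 1$ (only the constant term $\psi_0 \equiv 1$ has nonzero mean by Proposition \ref{prop1.5}), the integral of $F^{-1}_{M_n}$ collapses to a sum of the $q_k$'s divided by $Q_{M_n}$, giving $1$. In short, no genuinely new obstruction appears; the single conceptual point is that Lemma \ref{lemma0nnT121} lets us exchange the two kernels $F_{M_n}$ and $F^{-1}_{M_n}$ (which differ only by interchanging the role of the coefficients $q_k$) modulo a single Dirichlet kernel $D_{M_n}$, whose $L^1$-mass is concentrated on $I_n$ and therefore contributes harmlessly to the global bound and vanishes off $I_N$ once $n \geq N$.
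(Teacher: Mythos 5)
Your argument is essentially the paper's own proof, which simply cites the identity \eqref{1.71alphaT2j1} together with Paley's Lemma (Lemma \ref{dn2.3}), Lemma \ref{dn2.6int1} and Lemma \ref{Corollary3nn101}; you have merely written out explicitly the pointwise bound $\vert F^{-1}_{M_n}\vert \leq \vert D_{M_n}\vert +\vert F_{M_n}\vert$ (using $\vert \psi_{M_n-1}\vert =1$) and the resulting estimates for \eqref{1.72alpha-} and \eqref{1.73alpha-}. The only, harmless, deviation is that you check the normalisation \eqref{1.71alpha-} by integrating the definition of $F^{-1}_{M_n}$ directly rather than through the identity, which is the same computation the paper invokes for $F_n$ and inherits the paper's own indexing conventions for the coefficients $q_k$.
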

\begin{proof}
	According to  \eqref{1.71alphaT2j1} the proof is a direct consequence of Lemmas \ref{dn2.3} and \ref{Corollary3nn101} and Lemma \ref{dn2.6int1}. The proof is complete.
	
\end{proof}

\begin{corollary} \label{corollary3n}
	Let $\{q_k:k\in \mathbb{N}\}$ be a sequence of non-increasing numbers. Then, for any $ N\in \mathbb{N_+},$
	
	\begin{eqnarray} \label{1.71alpha}
	&&\int_{G_m} F_{M_n}(x) d\mu (x)=1, \\
	&&\sup_{n\in\mathbb{N}}\int_{G_m}\left\vert F_{M_n}(x)\right\vert d\mu(x)\leq c<\infty,\label{1.72alpha} \\
	&&\sup_{n\in\mathbb{N}}\int_{G_m \backslash I_N}\left\vert F_{M_n}(x)\right\vert d\mu (x)\rightarrow  0, \ \ \text{as} \ \ n\rightarrow  \infty, \label{1.73alpha}
	\end{eqnarray}
\end{corollary}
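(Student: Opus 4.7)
The plan is to mirror the proof of Corollary \ref{corollary3nn}, swapping the roles of $F_{M_n}$ and $F^{-1}_{M_n}$. The starting point is identity \eqref{1.71alphaT2j} in Lemma \ref{lemma0nnT121}:
\begin{equation*}
F_{M_n}(x)=D_{M_n}(x)-\psi_{M_n-1}(x)\overline{F^{-1}}_{M_n}(x).
\end{equation*}
This decomposition splits each of the three desired estimates into a Dirichlet-kernel piece and an $F^{-1}_{M_n}$-piece, the latter being reduced to its absolute value by the identity $|\psi_{M_n-1}|\equiv 1$. The Dirichlet piece is immediate from Paley's Lemma \ref{dn2.3}, which gives $D_{M_n}=M_n\chi_{I_n}$, combined with Lemma \ref{dn2.6int1}: this yields $\int_{G_m} D_{M_n}\,d\mu=1$, $\|D_{M_n}\|_1=1$, and $\int_{G_m\setminus I_N}|D_{M_n}|\,d\mu=0$ whenever $n\ge N$.

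The remaining task is therefore to verify the analogues of \eqref{1.71alpha}--\eqref{1.73alpha} with $F_{M_n}$ replaced by $F^{-1}_{M_n}$, under the non-increasing hypothesis. I would do this by paralleling the proof of Lemma \ref{Corollary3nn101} (and the sketch behind Corollary \ref{corollary3n9}): apply an Abel transformation in the spirit of \eqref{2bb}--\eqref{2bb1} to express $F^{-1}_{M_n}$ as a weighted combination of the Fej\'er kernels $K_j$, then invoke the pointwise bound \eqref{fn5} of Corollary \ref{lemma7kn0} together with the uniform $L^1$ and tail estimates of Lemma \ref{lemma7kn}. The advantage of a non-increasing sequence is that the telescoping sum $\sum_{j}(q_j-q_{j+1})$ collapses to $q_0-q_{M_n-1}\le q_0$, furnishing the uniform constant needed for \eqref{1.72alpha}, while the tail estimate \eqref{1.73alpha} follows by the same head/tail split with $\alpha_n\to 0$ as in the proof of Lemma \ref{Corollary3nn101}.

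The normalisation \eqref{1.71alpha} can either be read off directly from $\int F_{M_n}\,d\mu=\frac{1}{Q_{M_n}}\sum_{k=1}^{M_n}q_{M_n-k}\int D_k\,d\mu=\frac{1}{Q_{M_n}}\sum_{j=0}^{M_n-1}q_j=1$, or recovered from the identity above together with the corresponding normalisation of $F^{-1}_{M_n}$. The main bookkeeping subtlety, and the only real obstacle, is tracking signs when passing from the non-decreasing to the non-increasing setting in the Abel-transformed expression; once this is done, the argument is a direct reflection of what has already been carried out for non-decreasing sequences, so the proof can be left with only a brief indication, exactly in the spirit of the preceding corollary.
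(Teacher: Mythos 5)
Your decomposition is exactly the one the paper uses: its proof consists of the identity \eqref{1.71alphaT2j}, Paley's Lemma \ref{dn2.3} together with Lemma \ref{dn2.6int1} for the Dirichlet part, and a reduction of the remaining piece to kernel estimates for $F^{-1}$; the only difference is that the paper quotes these estimates (Lemma \ref{lemma0nnT12}) as a black box, while you propose to re-derive them by an Abel transformation in the style of Lemma \ref{Corollary3nn101}. For \eqref{1.71alpha} and \eqref{1.72alpha} your argument is complete: summation by parts gives total weight $\frac{1}{Q_{M_n}}\left(\sum_{k=1}^{M_n-1}(q_k-q_{k+1})k+q_{M_n}M_n\right)=\frac{1}{Q_{M_n}}\sum_{k=1}^{M_n}q_k\le 1$ for a non-increasing sequence, and \eqref{fn4} then yields the uniform $L^1$ bound.

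The genuine gap is in the tail estimate \eqref{1.73alpha}. In your head/tail split the head term is of size about $q_0N_0/Q_{M_n}$, and this tends to zero only if $Q_{M_n}\to\infty$, which monotonicity alone does not give: a non-increasing sequence may be summable. Moreover the failure is not merely in the proof. Take $q_k=2^{-k}$ in the Walsh case: on $G_m\setminus I_1\subset G_m\setminus I_n$ one has $D_{M_n}=0$, so $\left\vert F_{M_n}\right\vert=\frac{1}{Q_{M_n}}\left\vert\sum_{j=1}^{M_n-1}2^{-j}D_j\right\vert$, which converges uniformly to $\frac12\vert g\vert$ with $g=\sum_{j\ge1}2^{-j}D_j$; since on $G_m\setminus I_1$ one has $D_{2k}=0$ and $D_{2k+1}=\psi_{2k}$, there $g=\sum_{k\ge0}2^{-(2k+1)}\psi_{2k}$ and $\vert g\vert\ge\frac12-\sum_{k\ge1}2^{-(2k+1)}=\frac13$. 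Hence the integrals over $G_m\setminus I_1$ stay bounded away from zero and \eqref{1.73alpha} fails. So this step cannot be repaired at the stated level of generality: one needs an extra hypothesis forcing $Q_n\to\infty$, e.g. condition \eqref{fn0} or $q_\infty>0$ as for the $B_n$ class. The paper sidesteps the issue by citing Lemma \ref{lemma0nnT12}, but note that its hypotheses (non-decreasing, \eqref{fn01}) are not satisfied by a general non-increasing sequence either, so your difficulty sits precisely at the point the quoted lemma is supposed to cover.
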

\begin{proof} According to \eqref{1.71alphaT2j} the proof is a direct consequence of Lemmas \ref{dn2.3}, \ref{dn2.6int1} and \ref{lemma0nnT12}. The proof is complete.
	
\end{proof}
\newpage

\section{Norm Convergence of N\"orlund Means in Lebesgue Spaces}\label{s3,5}
\markright{Norm Convergence of N\"orlund  Means}

\ \ \ \ First we consider norm convergence of N\"orlund means with respect to Vilenkin systems:

\begin{theorem}\label{Corollary3nnconv1} Let  $f\in L^p(G_m)$ for $p\geq 1$ and $\{q_k:k\in \mathbb{N}\}$ be a sequence of non-decreasing numbers. Then
	
	$$\Vert t_n f-f\Vert_p \to 0 \ \ \text{as}\ \ n\to \infty.$$
\end{theorem}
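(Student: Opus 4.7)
The key observation is that the heavy lifting has already been done in the preceding section. By definition,
\begin{equation*}
t_n f(x) = \int_{G_m} f(t) F_n(x-t)\, d\mu(t) = (f \ast F_n)(x),
\end{equation*}
so the claim $\|t_n f - f\|_p \to 0$ is exactly the statement that convolution against the N\"orlund kernel $F_n$ approximates the identity in $L^p$.

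The plan is therefore to verify that the family $\{F_n\}_{n=1}^\infty$ forms an approximate identity in the sense of the Definition preceding Proposition \ref{theoremconv}, and then invoke that proposition. Fortunately, Lemma \ref{Corollary3nn101} gives precisely the three required properties for any non-decreasing sequence $\{q_k\}$: property (A1) is \eqref{1.71}, property (A2) is \eqref{1.72}, and property (A3) is \eqref{1.73}. Hence no new kernel estimates are required; Lemma \ref{Corollary3nn101} was tailored so that the conclusion of Proposition \ref{theoremconv} applies directly with $\Phi_n := F_n$.

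Putting the pieces together, for $f \in L^p(G_m)$ with $p \geq 1$, Proposition \ref{theoremconv} yields
\begin{equation*}
\|t_n f - f\|_p = \|F_n \ast f - f\|_p \longrightarrow 0, \qquad n \to \infty,
\end{equation*}
which is the desired conclusion. The main obstacle in this reduction has already been overcome in proving Lemma \ref{Corollary3nn101}, whose nontrivial part was property (A3): it was established there by splitting the Abel-transformed sum for $F_n$ at an index $N_0$ beyond which the tail norms $\alpha_j = \int_{G_m \setminus I_N}|K_j|\,d\mu$ are uniformly small, and controlling the initial segment via $\vert q_{n-j}-q_{n-j-1}\vert \leq 2q_{n-1}$ together with the growth $Q_n \geq nq_0$ inherent to non-decreasing sequences. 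With that result in hand, the present theorem is essentially a corollary.
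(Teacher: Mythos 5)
Your proposal is correct and follows essentially the same route as the paper: the paper's proof is precisely the observation that Lemma \ref{Corollary3nn101} supplies properties (A1)--(A3) for the kernels $F_n$ generated by a non-decreasing sequence, so that Proposition \ref{theoremconv} applied to $\Phi_n:=F_n$ gives $\Vert t_nf-f\Vert_p=\Vert F_n\ast f-f\Vert_p\to 0$. Your additional recap of how (A3) was obtained inside Lemma \ref{Corollary3nn101} is accurate but not needed for this theorem.
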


\begin{proof}
	According to Lemma \ref{Corollary3nn101} we conclude that the conditions (A1), (A2) and (A3) in Theorem \ref{theoremconv} are fulfilled, which implies the stated norm convergence.
	
	The proof is complete.
\end{proof}

\begin{theorem}\label{Corollary3nnconv111} Let  $f\in L^p(G_m)$ for $p\geq 1$ and $\{q_k:k\in \mathbb{N}\}$ be a sequence of non-increasing numbers satisfying the condition \eqref{fn0}. Then
	
	$$\Vert t_n f-f\Vert_p \to 0 \ \ \text{as}\ \ n\to \infty.$$
\end{theorem}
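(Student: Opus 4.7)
The plan is essentially to mirror the proof of Theorem \ref{Corollary3nnconv1}, but invoking the non-increasing analog of the kernel estimates instead. Recall from the excerpt that
$$t_n f(x) = \int_{G_m} f(t)\, F_n(x-t)\, d\mu(t) = (f \ast F_n)(x),$$
so the statement $\|t_n f - f\|_p \to 0$ is exactly the statement that convolution against the family $\{F_n\}_{n=1}^\infty$ converges to the identity operator in $L^p$-norm.

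First, I would verify that $\{F_n\}_{n=1}^\infty \subset L^\infty(G_m)$ (which is immediate since each $F_n$ is a finite linear combination of bounded Dirichlet kernels $D_k$) and that this family forms an approximate identity in the sense of the definition given just before Proposition \ref{theoremconv}. The three conditions (A1), (A2), (A3) are precisely \eqref{1.71inc}, \eqref{1.72inc}, \eqref{1.73inc} in Corollary \ref{corollary3n9}, which applies under the current hypotheses that $\{q_k\}$ is non-increasing and satisfies condition \eqref{fn0}. Hence $\{F_n\}$ is an approximate identity.

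Having done this, I would simply invoke Proposition \ref{theoremconv} with $\Phi_n = F_n$ and the given $f \in L^p(G_m)$ to conclude directly that
$$\|t_n f - f\|_p = \|F_n \ast f - f\|_p \to 0 \quad \text{as } n \to \infty,$$
for all $1 \leq p \leq \infty$, which is the claim. (The case $p = \infty$ is covered as well, but one should note that the norm convergence for unbounded $p$ is classical.)

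There is no serious obstacle: all the analytic work has been carried out in the preceding lemmas of Section \ref{s3,4a}, in particular Lemma \ref{lemma0nn1} and Corollary \ref{corollary3n9}, which extract uniform $L^1$-bounds and equi-smallness outside $I_N$ from the monotonicity plus \eqref{fn0}. The proof of the present theorem is therefore a one-line application of the abstract approximate identity principle, completely parallel to the proof of Theorem \ref{Corollary3nnconv1}, with Corollary \ref{corollary3n9} replacing Lemma \ref{Corollary3nn101}.
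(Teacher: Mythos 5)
Your proposal is correct and coincides with the paper's own argument: the theorem is proved by noting that Corollary \ref{corollary3n9} supplies exactly conditions (A1)--(A3) for the kernels $F_n$, so that Proposition \ref{theoremconv} applied to $\Phi_n=F_n$ immediately yields $\Vert t_nf-f\Vert_p\to 0$.
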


\begin{proof}
	According to Corollary \ref{corollary3n9} we conclude that the conditions (A1), (A2) and (A3) in Theorem \ref{theoremconv} are fulfilled and the stated norm convergence follows.
	
	The proof is complete.
\end{proof}

According to Theorems \ref{Corollary3nnconv1} and \ref{Corollary3nnconv111} we get the following result for N\"orlund  means:
\begin{corollary}\label{Corollary3nnconv1111}
	Let  $f\in L^p(G_m)$ for $p\geq 1$. Then	
	
	\begin{eqnarray*}
		&&\Vert \sigma_{n}f-f\Vert_p \to 0 \ \ \text{as}\ \ n\to \infty,\\
		&&\Vert B_{n}f-f\Vert_p \to 0 \ \ \text{as}\ \ n\to \infty,\\
		&&\Vert \beta^{\alpha}_{n}f-f\Vert_p \to 0 \ \ \text{as}\ \ n\to \infty,\\
	\end{eqnarray*}
\end{corollary}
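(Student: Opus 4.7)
The plan is to reduce each of the three convergence statements directly to one of the two preceding norm-convergence theorems, namely Theorem \ref{Corollary3nnconv1} for non-decreasing weight sequences and Theorem \ref{Corollary3nnconv111} for non-increasing sequences satisfying condition \eqref{fn0}. In each case the only work is to identify the generating sequence $\{q_k\}$ of the corresponding Nörlund mean and check that it fits into one of these two hypotheses; no new convergence argument is needed.

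For the Fejér means $\sigma_n$ the weight sequence is the constant $q_k\equiv 1$, which is simultaneously non-decreasing and non-increasing, and $Q_n=n$ so condition \eqref{fn0} is trivially fulfilled. Either Theorem \ref{Corollary3nnconv1} or Theorem \ref{Corollary3nnconv111} then immediately yields $\Vert \sigma_n f - f\Vert_p\to 0$. For the $B_n$ means the generating sequence is, by its very definition, monotone and bounded with $0<q_\infty<\infty$. If $\{q_k\}$ is non-decreasing, Theorem \ref{Corollary3nnconv1} applies directly. If it is non-increasing, I would use the elementary bound $q_k\geq q_\infty$ to get $Q_n=\sum_{k=0}^{n-1}q_k\geq nq_\infty$, so $1/Q_n=O(1/n)$ and condition \eqref{fn0} holds, allowing Theorem \ref{Corollary3nnconv111} to apply. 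For the means $\beta_n^\alpha$ the weights are $q_k=\log^\alpha(k+1)$ with $\alpha>0$, which is non-decreasing; Theorem \ref{Corollary3nnconv1} therefore applies, and the auxiliary estimates \eqref{node00} and \eqref{node001} already recorded in the excerpt further confirm that the hypotheses are comfortably satisfied.

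The main (and only) obstacle is really a bookkeeping one: the $B_n$-class is defined to contain \emph{either} non-decreasing or non-increasing bounded sequences, so one must split into two subcases and, in the non-increasing subcase, explicitly verify \eqref{fn0} via the uniform lower bound $q_k\geq q_\infty>0$. Once this small verification is done, the corollary is an immediate consequence of the two theorems just established, and no further estimation of kernels or means is needed.
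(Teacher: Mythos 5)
Your proposal is correct and follows essentially the same route as the paper: $\sigma_n$ and $\beta^{\alpha}_n$ are handled by Theorem \ref{Corollary3nnconv1} for non-decreasing weights, while $B_n$ is split into the non-decreasing case (Theorem \ref{Corollary3nnconv1}) and the non-increasing case, where \eqref{fn0} is verified via $Q_n\geq nq_\infty$ with $q_\infty>0$ so that Theorem \ref{Corollary3nnconv111} applies. The paper records this same verification of \eqref{fn0} in its discussion of the $B_n$ class, so no genuinely new ingredient appears in either argument.
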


\begin{proof}
	Since $\sigma_{n}f$ and $\beta^{\alpha}_{n}f$ are N\"orlund  means generated by non-decreasing sequences $\{q_k:k\in \mathbb{N}\},$ the corresponding norm convergences are direct consequences of Theorem \ref{Corollary3nnconv1}. In the case of  $B_{n}f$ means with non-decreasing sequence $\{q_k:k\in \mathbb{N}\},$ this result is also a consequence of Theorem \ref{Corollary3nnconv1}. 
	
	On the other hand, in the case of  $B_{n}f$ means with non-increasing sequence $\{q_k:k\in \mathbb{N}\},$ this result is a consequence of Theorem \ref{Corollary3nnconv111} and \eqref{fn0}. 
	
	The proof is complete.
\end{proof}
Now, we consider  subsequences of N\"orlund  means, generated by non-increasing sequences, but without any restrictions on the sequence $\{q_k:k\in \mathbb{N}\}$:
\begin{theorem}\label{Corollaryconv41} Let  $f\in L^p(G_m)$ for $p\geq 1$ and $\{q_k:k\in \mathbb{N}\}$ be a sequence of non-increasing numbers. Then	
	$$\Vert t_{M_n} f-f\Vert_p \to 0 \ \ \text{as}\ \ n\to \infty.$$
\end{theorem}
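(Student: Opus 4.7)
The plan is to recognize that this theorem is the exact analogue, for the subsequence $t_{M_n}$ associated with non-increasing weights, of Theorems \ref{Corollary3nnconv1} and \ref{Corollary3nnconv111}, and that the work has essentially already been done in Corollary \ref{corollary3n}. Indeed, by the convolution representation
\begin{equation*}
t_{M_n}f(x)=\int_{G_m}f(t)\,F_{M_n}(x-t)\,d\mu(t)=(f\ast F_{M_n})(x),
\end{equation*}
the conclusion is of the form $\Vert f\ast F_{M_n}-f\Vert_p\to 0$, which is precisely what the abstract approximate identity result (Proposition \ref{theoremconv}) delivers once one verifies (A1), (A2), (A3) for the family $\{F_{M_n}\}_{n\in\mathbb{N}}$.

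First I would write down the kernel representation above and point out that it suffices to show that the sequence $\{F_{M_n}\}$ forms an approximate identity in the sense of the definition preceding Proposition \ref{theoremconv}. Then I would invoke Corollary \ref{corollary3n}, which for non-increasing $\{q_k:k\in\mathbb{N}\}$ guarantees exactly the three required properties: $\int_{G_m}F_{M_n}\,d\mu=1$, $\sup_{n}\Vert F_{M_n}\Vert_1\leq c<\infty$, and $\int_{G_m\setminus I_N}\vert F_{M_n}\vert\,d\mu\to 0$ as $n\to\infty$ for every fixed $N\in\mathbb{N}_+$. Note that no growth or regularity restriction like \eqref{fn0} on $\{q_k\}$ is imposed here, because Corollary \ref{corollary3n} was proved without one — this is the whole point of passing to the subsequence indexed by $M_n$, since identity \eqref{1.71alphaT2j} rewrites $F_{M_n}$ in terms of $D_{M_n}$ and $F^{-1}_{M_n}$, and the latter is controlled via Lemma \ref{lemma0nnT12} for any non-increasing (in fact, arbitrary) weights satisfying the hypotheses used there.

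With (A1)--(A3) verified, applying Proposition \ref{theoremconv} with $\Phi_n:=F_{M_n}$ and the given $f\in L^p(G_m)$, $1\le p\le\infty$, yields
\begin{equation*}
\Vert t_{M_n}f-f\Vert_p=\Vert f\ast F_{M_n}-f\Vert_p\longrightarrow 0\quad\text{as }n\to\infty,
\end{equation*}
which is the desired conclusion.

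Since every ingredient is already in place, there is essentially no hard step; the only mildly delicate point is making sure that the derivation of Corollary \ref{corollary3n} from identity \eqref{1.71alphaT2j} is legitimate without the extra condition \eqref{fn0} that was needed in Lemma \ref{lemma0nn1} and Corollary \ref{corollary3n9}. I would therefore include a brief sentence reminding the reader that this is exactly why the $M_n$-subsequence behaves better than the full sequence $t_n$, and that is what allows us to drop the restriction on $\{q_k\}$ appearing in Theorem \ref{Corollary3nnconv111}.
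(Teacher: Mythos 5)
Your proposal is correct and follows essentially the same route as the paper: represent $t_{M_n}f=f\ast F_{M_n}$, verify (A1)--(A3) via Corollary \ref{corollary3n}, and apply Proposition \ref{theoremconv}. The only nuance is your parenthetical about Lemma \ref{lemma0nnT12} holding for ``arbitrary'' weights --- as stated it concerns non-decreasing sequences satisfying \eqref{fn01}, which is exactly how the paper's chain of references (via identity \eqref{1.71alphaT2j}) uses it, so nothing further is needed.
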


\begin{proof}
	According to Corollary \ref{corollary3n} we conclude that conditions (A1), (A2) and (A3) in Theorem \ref{theoremconv} are fulfilled and the claimed norm convergence is proved.
	The proof is complete.
\end{proof}


\section{Convergence of  N\"orlund Means in Vilenkin-Lebesgue points}\label{s3,7}
\markright{N\"orlund  Means and Vilenkin-Lebesgue points}

Our first main result concerning convergence of N\"orlund means reads:

\begin{theorem}\label{Corollary3nnconvn} a) Let $p\geq 1$ and $\{q_k:k\in \mathbb{N}\}$ be a sequence of non-decreasing numbers. 
	
	If the function $f\in L^1(G_m)$ is continuous at a point $x,$ then
	
	$${{t}_{n}}f(x)\to f(x), \ \ \ \text{as}\ \  \ n\to\infty.$$
	Furthermore,
	
	\begin{equation*}
	\underset{n\rightarrow \infty }{\lim }t_nf(x)=f(x)
	\end{equation*}
	for all Vilenkin-Lebesgue points of $f\in L^p(G_m)$.
	
	b)	Let $p\geq 1$ and $\{q_k:k\in \mathbb{N}\}$ be a sequence of non-increasing numbers satisfying the condition \eqref{fn0}. 
	
	If the function $f\in L^1(G_m)$ is continuous at a point $x,$ then
	
	$${{t}_{n}}f(x)\to f(x), \ \ \ \text{as}\ \  \ n\to\infty.$$ 
	Moreover,
	\begin{equation*}
	\underset{n\rightarrow \infty }{\lim }t_nf(x)=f(x)
	\end{equation*}
	for all Vilenkin-Lebesgue points of $f\in L^p(G_m)$.
\end{theorem}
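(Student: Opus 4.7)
The plan is to handle parts (a) and (b) by a single argument built on the common kernel estimate
$$\left\vert F_n(t)\right\vert \leq \frac{c}{n}\sum_{j=0}^{\left\vert n\right\vert} M_j \left\vert K_{M_j}(t)\right\vert,$$
which holds in case (a) by Lemma \ref{lemma0nn} and in case (b) by Lemma \ref{lemma0nn1}. Since $\int_{G_m} F_n\,d\mu = 1$ in both settings (Lemma \ref{Corollary3nn101} and Corollary \ref{corollary3n9}), the identity $t_n f(x) - f(x) = \int_{G_m}(f(x-t)-f(x))F_n(t)\,d\mu(t)$ yields the master bound
$$\left\vert t_n f(x)-f(x)\right\vert \leq \frac{c}{n}\sum_{j=0}^{\left\vert n\right\vert} M_j\int_{G_m}\left\vert f(x-t)-f(x)\right\vert\,\left\vert K_{M_j}(t)\right\vert\,d\mu(t).$$

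First I would dissect each inner integral via Lemma \ref{lemma2}. For fixed $j$ the support of $K_{M_j}$ splits into $I_j$, where $\left\vert K_{M_j}\right\vert\leq cM_j$, and the shifted sets $I_j(r_s e_s)$ with $0\leq s<j$ and $1\leq r_s\leq m_s-1$, on which $\left\vert K_{M_j}\right\vert\leq cM_s$. Translating by $x$ and using that $I_j$ is a subgroup (so $x-I_j(r_s e_s)=I_j(x-r_s e_s)$), the piecewise bound converts the integral into
$$\int_{G_m}\left\vert f(x-t)-f(x)\right\vert\,\left\vert K_{M_j}(t)\right\vert\,d\mu(t) \leq c\,W_j f(x) + cM_j\int_{I_j(x)}\left\vert f(y)-f(x)\right\vert\,d\mu(y),$$
which is precisely the Vilenkin-Lebesgue quantity plus the standard Lebesgue remainder.

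Writing $\alpha_j := W_j f(x)$ and $\beta_j := M_j\int_{I_j(x)}\left\vert f(y)-f(x)\right\vert\,d\mu(y)$, the master bound becomes $\left\vert t_n f(x)-f(x)\right\vert \leq \frac{c}{n}\sum_{j=0}^{\left\vert n\right\vert} M_j(\alpha_j+\beta_j)$. At a Vilenkin-Lebesgue point $\alpha_j\to 0$ by definition, and $\beta_j\to 0$ since such points are in particular Lebesgue points. If instead $f\in L^1$ is merely continuous at $x$, both $\alpha_j\to 0$ and $\beta_j\to 0$ follow directly: choose $N$ with $\left\vert f(y)-f(x)\right\vert<\varepsilon$ on $I_N(x)$ and split the $s$-sum in $W_j f(x)$ at $s=N$; for $s\geq N$ the inclusion $I_j(x-r_s e_s)\subset I_N(x)$ gives a geometric bound of order $\varepsilon$, while each of the finitely many $s<N$ vanishes as $j\to\infty$ by absolute continuity of the integral of $f$.

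It remains to establish the Cesàro-type fact that $\gamma_j\to 0$ implies $\frac{1}{n}\sum_{j=0}^{\left\vert n\right\vert} M_j\gamma_j\to 0$. Since $\sum_{j=0}^{\left\vert n\right\vert} M_j\leq 2M_{\left\vert n\right\vert}\leq 2n$, for any $\varepsilon>0$ pick $J$ with $\gamma_j<\varepsilon$ for $j\geq J$; then the head $\frac{1}{n}\sum_{j<J} M_j\gamma_j\leq 2M_{J-1}\sup_j\gamma_j/n$ tends to $0$ as $n\to\infty$, and the tail $\frac{1}{n}\sum_{j=J}^{\left\vert n\right\vert} M_j\gamma_j<2\varepsilon$, yielding $\limsup\leq 2\varepsilon$ and so the limit is zero. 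The main technical obstacle is the dissection step: one must carefully track that the weight on each strip is $M_s$ (not $M_j$) and include the central $I_j$ contribution, since a wrong indexing there would produce an unusable extra factor $M_j$. Everything that follows is routine.
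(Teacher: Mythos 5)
Your argument is essentially sound, but it takes a genuinely different route from the paper. The paper never works with the kernels $F_n$ at this point: it uses the Abel transformation \eqref{2bbb} together with \eqref{2b} to write $t_nf-f$ as a positive weighted mean of the differences $\sigma_jf-f$, quotes Propositions \ref{Pointwize} and \ref{villebfej} for the convergence of the Fej\'er means at continuity points and at Vilenkin-Lebesgue points, and then finishes with a regularity-of-summability argument on the weights $\left(q_{n-j}-q_{n-j-1}\right)j/Q_n$ (splitting the sum at an index $N_0$). You instead argue at the kernel level, combining the estimate of Lemmas \ref{lemma0nn}/\ref{lemma0nn1} with the dissection \eqref{fn50} of $K_{M_j}$ and a Ces\`aro-type step; in effect you re-prove the Fej\'er case (Proposition \ref{villebfej}) inside the argument instead of citing it. This is more self-contained, and your continuity-point argument and the final summability step (using $\sum_{j\leq\left\vert n\right\vert}M_j\leq 2M_{\left\vert n\right\vert}\leq 2n$) are correct, but the route incurs two debts the paper's proof does not.

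First, your dissection produces, besides $W_jf(x)$, the central term $M_j\int_{I_j(x)}\left\vert f-f(x)\right\vert d\mu$, and to dispose of it at a Vilenkin-Lebesgue point you assert that such points are Lebesgue points. This implication is stated nowhere in the paper (the paper only notes that almost every point is of both types) and your proof genuinely depends on it, so it must be justified. It is in fact true and short: if $y\in I_s(x)\backslash I_{s+1}(x)$ then $y\in I_{s+1}(x-r_se_s)$ with $r_s=x_s\ominus y_s\neq 0$, hence
\begin{equation*}
M_A\int_{I_A(x)}\left\vert f-f(x)\right\vert d\mu\leq M_A\sum_{s\geq A}\frac{1}{M_s}\,W_{s+1}f(x)\leq 2\sup_{u>A}W_uf(x),
\end{equation*}
so the central term tends to zero at every Vilenkin-Lebesgue point; include this computation. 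Second, in part a) you invoke Lemma \ref{lemma0nn}, whose hypothesis is condition \eqref{fn01}, whereas part a) of the theorem assumes only that $\{q_k\}$ is non-decreasing. (The paper's own proof also tacitly uses $q_{n-1}/Q_n\to 0$ in its estimate of the term $I_1$, so some regularity of the method is needed in any case, but your version requires the stronger bound $q_{n-1}/Q_n=O(1/n)$ and you should state this extra hypothesis explicitly.) Part b), where \eqref{fn0} is assumed, matches the hypothesis of Lemma \ref{lemma0nn1} exactly and is fine as written.
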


\begin{proof} Let $\{q_k:k\in \mathbb{N}\}$ be a non-decreasing sequence. Suppose that  $x$ is either a point of continuity of a function $f\in L^p(G_m)$ or Vilenkin-Lebesgue point of the function $f\in L^p(G_m).$ According to Proposition \ref{Pointwize} and Proposition \ref{villebfej} we can conclude that
	$$
	\underset{n\rightarrow \infty }{\lim }\vert\sigma_nf(x)-f(x)\vert=0.
	$$
	Hence, by combining \eqref{2b} and \eqref{2bbb} we can conclude that
	\begin{eqnarray*}
		&&\vert t_nf(x)-f(x)\vert \\
		&\leq&\frac{1}{Q_n}\left(\overset{n-2}{\underset{j=1}{\sum}}\left(q_{n-j}-q_{n-j-1}\right)j\vert\sigma_jf(x)-f(x)\vert+q_0n\vert\sigma_nf(x)-f(x)\vert\right)\\
		&\leq&\frac{1}{Q_n}\overset{n-2}{\underset{j=0}{\sum}}\left(q_{n-j}-q_{n-j-1}\right)j\alpha_j+\frac{q_0n\alpha_n}{Q_n}
		:=I+II, 
	\end{eqnarray*}
	where  $  \alpha_n\to 0, \  \text{as} \  n\to\infty.$
	
	Since the sequence $\{q_k:k\in \mathbb{N}\}$ is non-decreasing, we can conclude that	
	
	$$II\leq \alpha_n\to 0, \ \ \text{as} \ \ n\to\infty.$$
	
	On the other hand, since $\alpha_n$  converges to $0,$  we get that there exists an absolute constant $A,$ such that $\alpha_n\leq A$ for any $n\in \mathbb{N}$ and for any $\varepsilon>0$ there exists $N_0\in \mathbb{N},$ such that 
	
	$$\alpha_n< \varepsilon \ \ \text{when } \ \ n>N_0.$$ 
	Hence, 
	\begin{eqnarray*}
		I&=&\frac{1}{Q_n}\overset{N_0}{\underset{j=1}{\sum}}\left(q_{n-j}-q_{n-j-1}\right)j\alpha_j
		+\frac{1}{Q_n}\overset{n-1}{\underset{j=N_0+1}{\sum}}\left(q_{n-j}-q_{n-j-1}\right)j\alpha_j\\
		&:=&I_1+I_2.	
	\end{eqnarray*}
	Since  	
	
	$$\vert q_{n-j}-q_{n-j-1}\vert<2q_{n-1} \ \ \ \text{ and } \ \ \ \alpha_n<A,$$ 
	we obtain that
	
	\begin{eqnarray*}
		I_1=\frac{1}{Q_n}\overset{N_0}{\underset{j=1}{\sum}}\left(q_{n-j}-q_{n-j-1}\right)j\alpha_j \leq \frac{2AN_0q_{n-1}}{Q_n}\to 0, \ \ \ \text{as} \ \ \ n\to \infty
	\end{eqnarray*}
	and
	
	\begin{eqnarray*}
		I_2&=&\frac{1}{Q_n}\overset{n-1}{\underset{j=N_0+1}{\sum}}\left(q_{n-j}-q_{n-j-1}\right)j\alpha_j \\
		&\leq& \frac{\varepsilon}{Q_n}\overset{n-1}{\underset{j=N_0+1}{\sum}}\left(q_{n-j}-q_{n-j-1}\right)j\\
		&\leq& \frac{\varepsilon}{Q_n}\overset{n-1}{\underset{j=0}{\sum}}\left(q_{n-j}-q_{n-j-1}\right)j<\varepsilon.
	\end{eqnarray*}
	We conclude that also $I_2\to 0$ so a) is proved.
	
	Assume now that the sequence is non-increasing and satisfying condition \eqref{fn0}. 
	To prove  convergence in Vilenkin-Lebesgue points we use the estimations \eqref{2b} and \eqref{2bbb} to obtain that
	
	\begin{eqnarray*}
		\vert t_nf-f(x)\vert &\leq&\frac{1}{Q_n}\overset{n-2}{\underset{j=0}{\sum}}\left(q_{n-j-1}-q_{n-j}\right)j\alpha_j+\frac{q_0n\alpha_n}{Q_n} \\
		&:=&III+IV, 
	\end{eqnarray*}
	where   $\alpha_n\to 0, \  \text{as} \  n\to\infty.$
	
	It is evident that
	
	$$IV\leq\frac{q_{0}n\alpha_n}{Q_n}\leq C \alpha_n\to 0, \ \ \text{as} \ \ n\to\infty.$$	
	Moreover, for any $\varepsilon>0$ there exists $N_0\in \mathbb{N},$ such that $\alpha_n< \varepsilon$ when $n>N_0.$ It follows that
	
	\begin{eqnarray*}
		&&\frac{1}{Q_n}\overset{n-2}{\underset{j=1}{\sum}}\left(q_{n-j-1}-q_{n-j}\right)j\alpha_j \\
		&=&\frac{1}{Q_n}\overset{N_0}{\underset{j=1}{\sum}}\left(q_{n-j-1}-q_{n-j}\right)j\alpha_j
		+\frac{1}{Q_n}\overset{n-2}{\underset{j=N_0+1}{\sum}}\left(q_{n-j-1}-q_{n-j}\right)j\alpha_j\\
		&:=&III_1+III_2.
	\end{eqnarray*}
	
	Since sequence is non-increasing, we can conclude that 
	$$\vert q_{n-j}-q_{n-j-1}\vert<2q_{0}.$$  
	Hence,
	
	\begin{eqnarray*}
		III_1\leq\frac{2q_{0}N_0}{Q_n}\to 0, \ \ \ \text{as} \ \ \ n\to \infty
	\end{eqnarray*}
	and
	
	\begin{eqnarray*}
		III_2&\leq&\frac{1}{Q_n}\overset{n-2}{\underset{j=N_0+1}{\sum}}\left(q_{n-j-1}-q_{n-j}\right)j\alpha_j\\
		&\leq&\frac{\varepsilon(n-1)}{Q_n}\overset{n-2}{\underset{j=N_0+1}{\sum}}\left(q_{n-j}-q_{n-j-1}\right) \\
		&\leq& \frac{\varepsilon(n-1)}{Q_n}\left(q_{0}-q_{n-N_0}\right)\\
		&\leq& \frac{2q_0\varepsilon(n-1)}{Q_n}<C\varepsilon.
	\end{eqnarray*}
	Hence, also $III\to 0$ so the proof of part b)  is also complete.
\end{proof}
\begin{corollary} \label{3.7.2t}
	Let $f\in L^p(G_m),$ where  $p\geq 1.$ Then, for all Lebesgue points of $ f\in L^p(G_m),$
	
	\begin{eqnarray*}
		\sigma_{n}f &\rightarrow& f,\text{ \  as \ }n\rightarrow \infty,\\  
		B_{n}f &\rightarrow& f,\text{  \ as \ }n\rightarrow
		\infty, \\
		\beta^{\alpha}_{n}f &\rightarrow& f,\text{  \ as \ }n\rightarrow
		\infty.
	\end{eqnarray*}
\end{corollary}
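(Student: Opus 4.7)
The plan is to recognize each of the three means as a particular N\"orlund mean whose coefficient sequence $\{q_k:k\in \mathbb{N}\}$ falls under one of the two hypotheses of Theorem \ref{Corollary3nnconvn}, and then invoke that theorem directly. (Here I read the conclusion in the natural way suggested by the excerpt: pointwise convergence at every Vilenkin-Lebesgue point, which in particular gives the stated a.e./Lebesgue-point conclusion, since almost every point is both a Lebesgue and a Vilenkin-Lebesgue point of an integrable function.)

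First I would handle the Fej\'er means: $\sigma_{n}f$ is the N\"orlund mean generated by the constant sequence $q_k \equiv 1$, which is trivially non-decreasing. Theorem \ref{Corollary3nnconvn}(a) then delivers the claimed pointwise convergence. The same strategy applies verbatim to $\beta_{n}^{\alpha}f$, which is the N\"orlund mean with weights $q_k = \log^{\alpha}(k+1)$, a non-decreasing sequence for every $\alpha > 0$; so once again Theorem \ref{Corollary3nnconvn}(a) gives the result.

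The class $B_n$ splits into two subcases according to whether the generating sequence $\{q_k:k\in \mathbb{N}\}$ is non-decreasing or non-increasing. In the non-decreasing case, Theorem \ref{Corollary3nnconvn}(a) applies directly. In the non-increasing case, I would verify hypothesis \eqref{fn0} of Theorem \ref{Corollary3nnconvn}(b): since $\{q_k\}$ is monotone and bounded with $q_{\infty} := \lim_{n\to\infty} q_n$ satisfying $0 < q_{\infty} < \infty$, the two-sided estimate $n q_{\infty} \leq Q_n \leq n q_0$ recorded in Section \ref{s3.2} gives
\[
\frac{1}{Q_n} \leq \frac{1}{n q_{\infty}} = O\!\left(\frac{1}{n}\right),
\]
which is exactly \eqref{fn0}. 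Hence Theorem \ref{Corollary3nnconvn}(b) applies to this subcase.

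There is no real obstacle: the whole argument is a matching exercise, showing that each of the three families fits into one of the two hypotheses of Theorem \ref{Corollary3nnconvn}. The only (minor) point that needs a brief verification is condition \eqref{fn0} for non-increasing $B_n$, and this is immediate from the strict positivity of $q_{\infty}$.
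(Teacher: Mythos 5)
Your proposal is correct and is essentially the paper's own (implicit) argument: the paper offers no separate proof of this corollary, treating it as the direct specialization of Theorem \ref{Corollary3nnconvn} obtained by checking that $\sigma_n$ and $\beta^{\alpha}_n$ are generated by non-decreasing sequences and that $B_n$ in the non-increasing case satisfies \eqref{fn0} via $Q_n\geq nq_{\infty}$, exactly as Corollary \ref{Corollary3nnconv1111} specializes the norm-convergence theorems. Your caveat about reading the conclusion at Vilenkin-Lebesgue points (hence almost everywhere) also matches the paper's own usage, since Theorem \ref{Corollary3nnconvn} is stated for Vilenkin-Lebesgue points while the corollary says ``Lebesgue points.''
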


\begin{theorem}\label{Corollaryconv4} Let $p\geq 1$ and $\{q_k:k\in \mathbb{N}\}$ be a sequence of non-increasing numbers. Then
	
	\begin{equation*}
	\underset{n\rightarrow \infty }{\lim }t_{M_n}f(x)=f(x)
	\end{equation*}
	for all Lebesgue points of $f\in L^p(G_m)$.
\end{theorem}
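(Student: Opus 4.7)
The plan is to combine Proposition \ref{Snae1} with the identity from Lemma \ref{lemma0nnT121},
\[F_{M_n}(x) = D_{M_n}(x) - \psi_{M_n-1}(x)\overline{F^{-1}_{M_n}(x)}.\]
Convolving this against $f$ yields $t_{M_n}f(x) = S_{M_n}f(x) - R_n(x)$ where
\[R_n(x) := \int_{G_m} f(x-t)\,\psi_{M_n-1}(t)\overline{F^{-1}_{M_n}(t)}\,d\mu(t).\]
Since Proposition \ref{Snae1} already supplies $S_{M_n}f(x)\to f(x)$ at every Lebesgue point, the theorem reduces to showing $R_n(x)\to 0$ at each such point.

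Next I would split $R_n(x)$ into the pieces over $I_n$ and $G_m\setminus I_n$ and simplify each using the two forms of Lemma \ref{lemma0nnT121}. On $I_n$ the Rademacher functions all equal $1$, so $\psi_{M_n-1}\equiv 1$ and the dual identity forces $\overline{F^{-1}_{M_n}}$ to reduce to the real constant $M_n - c_n$, where $c_n := Q_{M_n}^{-1}\sum_{j=0}^{M_n-1}q_j(M_n-j)\in[0,M_n]$; on $G_m\setminus I_n$ the vanishing $D_{M_n}\equiv 0$ collapses the identity to $\psi_{M_n-1}\overline{F^{-1}_{M_n}} = -F_{M_n}$. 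Using $\int F_{M_n}\,d\mu = 1$ to absorb the resulting constant pieces, a short rearrangement produces
\[t_{M_n}f(x) - f(x) = \frac{c_n}{M_n}\bigl(S_{M_n}f(x) - f(x)\bigr) + \int_{G_m\setminus I_n}\bigl(f(x-t) - f(x)\bigr)F_{M_n}(t)\,d\mu(t),\]
and since $0\le c_n/M_n\le 1$, the first summand vanishes at every Lebesgue point.

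The main obstacle, and the step I expect to require the most care, is to prove that $J_n(x) := \int_{G_m\setminus I_n}(f(x-t)-f(x))F_{M_n}(t)\,d\mu(t)\to 0$ at every Lebesgue point, because the Lebesgue hypothesis provides no direct control of $f$ outside $I_n(x)$. My plan would be to exploit Corollary \ref{corollary3n}, which says $F_{M_n}$ is an approximate identity: approximate $f$ in $L^p$-norm by a continuous $g$, use compactness of $G_m$ together with uniform continuity of $g$ to deduce $t_{M_n}g(x)\to g(x)$ pointwise, and then control the residual $t_{M_n}(f-g)(x)$ through a weak-type maximal estimate for $\sup_n|t_{M_n}\cdot|$, built from the structural bound $|F_{M_n}(t)|\le cM_s$ on $I_s\setminus I_{s+1}$ (which follows from Lemma \ref{dn2.6}) together with $\sup_n\|F_{M_n}\|_1<\infty$ from Corollary \ref{corollary3n}. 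The delicate point will be arranging for the null set excluded by this maximal argument to be absorbed into the complement of the Lebesgue set of $f$, so that pointwise convergence survives at every Lebesgue point and not merely almost everywhere.
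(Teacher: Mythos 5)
Your opening moves coincide with the paper's: the identity of Lemma \ref{lemma0nnT121} gives $t_{M_n}f(x)=S_{M_n}f(x)-R_n(x)$, and your rearrangement is indeed correct, since $F_{M_n}$ is the constant $c_n=Q_{M_n}^{-1}\sum_{k=1}^{M_n}q_{M_n-k}k\in[0,M_n]$ on $I_n$ and $S_{M_n}f(x)=M_n\int_{I_n}f(x-t)\,d\mu(t)$, which together with $\int_{G_m}F_{M_n}\,d\mu=1$ yields
\begin{equation*}
t_{M_n}f(x)-f(x)=\frac{c_n}{M_n}\bigl(S_{M_n}f(x)-f(x)\bigr)+\int_{G_m\setminus I_n}\bigl(f(x-t)-f(x)\bigr)F_{M_n}(t)\,d\mu(t).
\end{equation*}
The genuine gap is exactly the point you flag yourself: your plan for the tail term $J_n(x)$ rests on density of continuous functions plus a weak-type $(1,1)$ maximal inequality for $\sup_n|t_{M_n}f|$. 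That machinery can only produce convergence outside some null set whose location is not controlled; there is no mechanism in the argument that forces this exceptional set to avoid the Lebesgue points of $f$. Since the theorem asserts convergence \emph{at every Lebesgue point} (not merely a.e.), the proposed route does not prove the stated result, and the "delicate point" you mention is not a technicality to be arranged later but the missing idea itself.

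The paper closes this gap by never isolating a tail integral over $G_m\setminus I_n$ at all. After writing $t_{M_n}f=I-II$ with $I=S_{M_n}f(x)$, it factors $\psi_{M_n-1}(x-t)=\psi_{M_n-1}(x)\overline{\psi}_{M_n-1}(t)$ by Proposition \ref{vilprop}, so that
\begin{equation*}
II=\psi_{M_n-1}(x)\int_{G_m}f(t)\,\overline{F^{-1}}_{M_n}(x-t)\,\overline{\psi}_{M_n-1}(t)\,d\mu(t)
\end{equation*}
is, up to a unimodular factor, the $(M_n-1)$-st Vilenkin--Fourier coefficient of the integrable function $t\mapsto f(t)\overline{F^{-1}}_{M_n}(x-t)$ (integrability and uniform $L^1$ control come from Proposition \ref{Theorem 1.1.6} and Lemma \ref{lemma0nnT12}); a Riemann--Lebesgue type argument then gives $II\to 0$ for \emph{every} $x\in G_m$. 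Thus the Lebesgue-point hypothesis is used only through Proposition \ref{Snae1} for the term $I$, which is what makes the "every Lebesgue point" conclusion attainable. If you want to salvage your decomposition, you would have to estimate $J_n(x)$ pointwise at a fixed Lebesgue point (not via a maximal operator), which the Lebesgue condition alone does not obviously permit; switching to the paper's Fourier-coefficient argument for $R_n(x)$ is the natural fix.
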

\begin{proof} By using  Lemma \ref{lemma0nnT121}  we get that
	
	\begin{eqnarray*}
		t_{M_n}f\left(x\right)&=&\underset{G_m}{\int}f\left(t\right)F_n\left(x-t\right) d\mu\left(t\right)\\
		&=&\underset{G_m}{\int}f\left(t\right)D_{M_n}\left(x-t\right)d\mu\left(t\right)
		-\underset{G_m}{\int}f\left(t\right)\psi_{M_n-1}(x-t)\overline{F^{-1}}_{M_n}(x-t)\\
		&:=&I-II.
	\end{eqnarray*}
	By applying Proposition \ref{Snae1} we get that $I=S_{M_n}f(x)\to f(x)$
	for all Lebesgue points of $f\in L^p(G_m)$, where $p\geq 1.$
	
	Moreover, according to Proposition \ref{vilprop} we find that
	
	$$\psi_{M_n-1}(x-t)=\psi_{M_n-1}(x)\overline{\psi}_{M_n-1}(t)$$ 
	and
	
	$$II=\psi_{M_n-1}(x)\underset{G_m}{\int}f\left(t\right)\overline{F^{-1}}_{M_n}(x-t)\overline{\psi}_{M_n-1}(t)d(t).$$
	By combining Theorem \ref{Theorem 1.1.6}  and Corollary \ref{lemma0nnT12} we find that the  function 
	
	$$f\left(t\right)\overline{F^{-1}}_{M_n}(x-t)\in L^p(G_m) \ \ \text{ where} \ \  p\geq  1 \ \ \text{for any } \ \ x\in G_m, $$
	and $II$ are Fourier coefficients of an integrable function. Hence, according to the Riemann-Lebesgue Lemma we get that
	$$II\to 0,\text{  \ as \ }n\rightarrow
	\infty, \  \text{for any } \  x\in G_m.$$
	
	The proof is complete.
\end{proof}

\begin{corollary}\label{3.7.4t}
	Let $f\in L^p(G_m),$ where  $p\geq 1.$ Then, for all Lebesgue points of $ f\in L^p(G_m),$
	
	\begin{eqnarray*}
		\sigma^{\alpha}_{M_n}f &\rightarrow& f,\text{  \ as \ }n\rightarrow
		\infty,  \\
		V^{\alpha}_{M_n}f &\rightarrow& f,\text{  \ as \ }n\rightarrow
		\infty, \\
		U^{\alpha}_{M_n}f &\rightarrow& f,\text{  \ as \ }n\rightarrow
		\infty. \\
	\end{eqnarray*}
\end{corollary}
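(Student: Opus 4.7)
The plan is to recognize each of $\sigma^{\alpha}_{M_n}$, $V^{\alpha}_{M_n}$, and $U^{\alpha}_{M_n}$ as a subsequence $t_{M_n}$ of a N\"orlund mean whose generating sequence $\{q_k:k\in \mathbb{N}\}$ is non-increasing, and then quote Theorem \ref{Corollaryconv4} to conclude Lebesgue-point convergence. So the entire work is reduced to verifying, case by case, that the three relevant sequences are indeed non-increasing; no new kernel estimates or Riemann-Lebesgue arguments are needed beyond those already in the proof of Theorem \ref{Corollaryconv4}.

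First, for the Ces\`aro means $\sigma^{\alpha}_n$ with $0<\alpha\leq 1$, the coefficients are $q_k=A_k^{\alpha-1}$. Using the recursion $A_{k+1}^{\alpha-1}=\frac{k+\alpha}{k+1}A_k^{\alpha-1}$, which follows from the elementary identities \eqref{node0} and \eqref{node01}, the ratio $\frac{k+\alpha}{k+1}\leq 1$ whenever $\alpha\leq 1$, so the sequence is non-increasing. Second, for $V^{\alpha}_n$ the generating sequence is $q_k=(k+1)^{\alpha-1}$ with $0<\alpha<1$; since the exponent $\alpha-1$ is negative, the sequence is strictly decreasing. Third, for $U^{\alpha}_n$ the coefficients are $q_k=\frac{1}{(k+3)\ln^{\alpha}(k+3)}$ with $0<\alpha\leq 1$, and both factors $(k+3)$ and $\ln^{\alpha}(k+3)$ are non-decreasing in $k$, so the reciprocal is non-increasing.

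With monotonicity verified in each case, Theorem \ref{Corollaryconv4} applies verbatim, giving for every Lebesgue point $x$ of $f\in L^p(G_m)$, $p\geq 1$, the three convergences $\sigma^{\alpha}_{M_n}f(x)\to f(x)$, $V^{\alpha}_{M_n}f(x)\to f(x)$ and $U^{\alpha}_{M_n}f(x)\to f(x)$. There is no genuine obstacle here; the only point requiring care is the Ces\`aro case when $\alpha=1$, where the sequence is constant (hence both non-increasing and non-decreasing) and $\sigma^{1}_{M_n}=\sigma_{M_n}$ is simply the Fej\'er mean, so the conclusion reduces to a special case already covered by Corollary \ref{3.7.2t}.
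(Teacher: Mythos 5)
Your proposal is correct and follows essentially the same route as the paper, which states this corollary as an immediate consequence of Theorem \ref{Corollaryconv4}: the only content is the case-by-case check that the generating sequences $q_k=A_k^{\alpha-1}$ (for $0<\alpha\leq 1$), $q_k=(k+1)^{\alpha-1}$ and $q_k=\bigl((k+3)\ln^{\alpha}(k+3)\bigr)^{-1}$ are non-increasing, which you carry out correctly.
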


\nocite{*}
\def\refname{References}
\def\printchapternonum{}
\bibliographystyle{abbrv}
\bibliography{bib_source/references}
\cleardoublepage

%
%
%
%
%
%
%
%
%

\end{document}